\newtheorem{theorem}{Theorem}[section]
\theoremstyle{plain}
\newtheorem{corollary}{Corollary}[section]
\newtheorem{lemma}{Lemma}[section]
\newtheorem{proposition}{Proposition}[section]
\newtheorem{remark}{Remark}[section]
\numberwithin{equation}{section}
\begin{document}
\title[On Seifert conemanifold structures]{On continuous families of geometric Seifert conemanifold structures }
\author{Mar\'{\i}a Teresa Lozano}
\address[M.T. Lozano]{IUMA, Universidad de Zaragoza \\
Zaragoza, 50009, Spain}
\email[M.T. Lozano]{tlozano@unizar.es}
\thanks{Partially supported by grant MTM2013-45710-c2-1-p and Grupo consolidado E15 Gobierno de Arag\'{o}n/Fondo Social Europeo.}
\author{Jos\'{e} Mar\'{\i}a Montesinos-Amilibia}
\address[J.M. Montesinos]{Dto. Geometr\'{\i}a y Topolog\'{\i}a \\
Universidad Complutense, Madrid 28080 Spain}
\email[J.M. Montesinos]{jose{\_}montesinos@mat.ucm.es}
\thanks{Partially supported by grant MTM2015-63612-P.}
\date{\today }

\date{\today }
\subjclass[2000]{Primary 20G20, 53C20; Secondary 57M50, 22E99}
\keywords{hyperbolic manifold, ,}
\dedicatory{Dedicated to Louis H. Kauffman on the Occasion of his 70th Birthday}

\begin{abstract}
We determine the  Thurston's geometry possesed by any Seifert fibered conemanifold structure in a Seifert manifold with orbit space $S^2$ and no more than three exceptional fibres, whose singular set, composed by fibres, has at most 3 components which can include exceptional or general fibres (the total number of exceptional and singular fibres is less or equal than three). We also give the method to obtain the holonomy of that structure. We apply these results to three families of Seifert manifolds, namely,  spherical, Nil manifolds  and  manifolds obtained by Dehn surgery in a torus knot $K_{(r,s)}$.  As a consequence we generalize to all torus knots the results obtained  in \cite{LM2015}  for the case of the left handle trefoil knot. We associate a plot to each torus knot for the different geometries, in the spirit of Thurston.

\end{abstract}

\maketitle
\tableofcontents

\section*{\protect\bigskip Introduction}

An \emph{orbifold} is a topological space which locally is as the quotient of a ball by a properly discontinuous group. In dimension 3 a \emph{geometric orbifold} is a quotient $\Gamma \backslash X$ where $X$ is one of the eight Thurston geometries and the action of $\Gamma <$ Isom$(X)$ on $X$ is properly discontinuous. The \emph{singularity} is the projection of the set of fixed points of the   elements of $\Gamma$. A general description of these geometric orbifolds is given in \cite{D1988}.  A \emph{Seifert fibered orbifold} is an orbifold structure in a Seifert manifold where the singular set is a finite number of fibres. Each fibre $L_{i}$ in the singular set is endowed with a natural number $n_{i}$, whose meaning is that the angle around the fibre is $2\pi /n_{i}$.

The concept of \emph{Seifert fibered conemanifold} generalices that of Seifert fibered orbifold, by allowing any values for the angles around the singular fibres. In this way the Seifert orbifolds with a fixed singular set are included in a continuous family of Seifert fibered conemanifolds. This continuous family of conemanifolds is the convenient framework to study degenerations and transitions among different geometries, for instance Spherical-Euclidean-Hyperbolic (\cite{HLM1992},\cite{HLM1995}), $(S^2\times \mathbb{R})$-$E^3$-$(H^2\times\mathbb{R})$ or Spherical-Nil-$\widetilde{SL(2,\mathbb{R})}$ (\cite{LM2014}). The case of the  Seifert fibered conemanifold structures on the manifolds obtained by Dehn surgery in the  left-handed trefoil knot were studied in \cite{LM2015}. Here we study the case of Seifert fibered conemanifold structures on the Seifert manifolds with orbit space $S^2$ and with no incompressible fiberwise torus such that the singular set is a link with no more than three components which can include exceptional or general fibres (the total number of exceptional and singular fibres is less or equal than three). This family includes some interesting subfamilies, as the Seifert manifolds with orbit space $S^2$ and finite fundamental group, and also the Seifert fibered conemanifold structures in manifolds obtained by Dehn surgery in a torus knot $K_{(r,s)}$ with singularity the core of the surgery. We obtain for each torus knot its two limits of sphericity, to be explained later.   As a  consequence,  we can obtain the holonomy of the Thurston geometry possessed by any given Seifert fibered orbifold obtained by surgery on a torus knots. The method to do this is explained in this paper and a concrete example of the application of this method was developed in \cite{LM2015} for the case of the left handle trefoil knot. In other words, among other results, in this paper we are generalizing to all torus knot the results obtained  in \cite{LM2015}. We associate a plot for the different geometries, similar to Thurston's.

The paper is  organized as follows. In Section \ref{s1} we describe the family $\mathcal{F}$ of Seifert manifolds to study. In Section \ref{s2} the geometries of 2-conemanifold structures in $S^2$ with 2 or 3 cone points are obtained. Those structures are the ones on the orbit spaces of the manifolds in $\mathcal{F}$. Section \ref{s3} contains the theorems that describe the geometric conemanifold structures on the manifolds in $\mathcal{F}$ for the cases of 3, 2 and 1 singular fibres. In the following sections we apply these theorems to the subfamilies of spherical manifolds (Section \ref{s4}), Nil manifolds (Section \ref{s5}) and  manifolds obtained by Dehn surgery in a torus knot (Section \ref{s5}).

We use the Seifert notation \cite{S1933}, and the currently standard used  orientation convention for lens spaces and Dehn surgery (see \cite{M1987}), though it may be the opposite of the ones used in \cite{M1971} and \cite{O1972}.

\section{Some Seifert manifolds}\label{s1}

We consider the family $\mathcal{F}$ of closed Seifert manifolds $M$ whose orbit space is the 2-sphere such that the number of exceptional fibres is less or equal than 3. The following  Seifert signature defines the oriented Seifert manifold up to fibre and orientation preserving homeomorphism.
\[
M=\left\{
        \begin{array}{l}
          (O,o,0\, |\, b;(a_{1},b_{1}),(a_{2},b_{2}),(a_{3},b_{3})) \\
         (O,o,0\, |\, b;(a_{1},a_{1}),(a_{2},b_{2}))\\
         (O,o,0\, |\, b;(a_{1},a_{1}))\\
         (O,o,0\, |\, b))
        \end{array}
      \right.
\]
where $O$ stands for orientable 3-manifold; $o$ stands for orientable orbit space; $0$, for the genus of the orbit space ($S^2$ in our cases); $b$ is an integer; $(a_{i},b_{i})$ are coprime integers such that $0<b_{i}<a_{i}$.

If the Euler number
\[
e= -b-\underset{i}{\sum }\frac{b_i}{a_i}
\]
is different from zero, the manifold $M$ have a spherical, Nil or $\widetilde{SL(2,\mathbb{R})}$ geometric structure. If the Euler number is zero, the possible geometries are $S^2\times \mathbb{R}$, Euclidean or $H^2\times \mathbb{R}$.  Which one of these  geometries is the correct one depends on the value of the orbifold Euler characteristic of the orbit space $\chi (\mathbf{B})$(See for instance, \cite{S1983}).
\medskip
\begin{center}
 \begin{tabular}{c||c|c|c|}
      & $\chi (\mathbf{B})>0$ & $\chi (\mathbf{B})=0$ & $\chi (\mathbf{B})<0 $\\ \hline \hline
      &&&\\
     $e=0$ & $S^2\times \mathbb{R}$ & $E^3$ & $H^2\times \mathbb{R}$ \\ \hline &&&\\
     $e\neq 0$ & $S^3$ & Nil & $\widetilde{SL(2,\mathbb{R})}$ \\ \hline
   \end{tabular}
\end{center}
\medskip

Observe that each manifold in this family $\mathcal{F}$ is \textit{atoroidal} or \emph{small}, that is, contain no incompressible fiberwise tori. In fact if a Seifert manifold is atoroidal then either it belongs to  $\mathcal{F}$ or its orbit base is the projective plane with at most one cone point \cite{FM1997}.

This family includes some interesting subfamilies:
\begin{description}
  \item[$\mathcal{F}_1$] Seifert manifolds with orbit space $S^{2}$ and spherical geometry. They have finite fundamental group. It includes all the fibrations in $S^{3}$ with two exceptional fibres, with multiplicities  a pair of coprime integers, and whose general fibre is a torus knot.
  \item[$\mathcal{F}_2$] Seifert manifolds with orbit space $S^{2}$ and Nil or Euclidean geometry.
\item[$\mathcal{F}_3$] Manifolds obtained by  $p/q$-Dehn surgery on a torus knot in $S^{3}$.
  \item[$\mathcal{F}_4$] Homological spheres with 3 exceptional fibres. They are the Brieskorn complete intersection manifolds $M(a_1,a_2,a_3)$, where $\gcd (a_{i},a_{j})=1$, $0<i<j\leq 3$.
\end{description}

These subfamilies have intersections. In fact, the Poincar\'{e} manifold
\[
(O,o,0\, |\, -1;(2,1),(3,1),(5,1))
\]
belongs to $\mathcal{F}_1$, $\mathcal{F}_3$ and $\mathcal{F}_4$. More generally, the manifolds in $\mathcal{F}_4$ where $a_1<a_2<a_3$, $a_3=|qa_1 a_2 -1|$ are the result of $1/q$-Dehn surgery on the torus knot $K_{(a_{2},a_{1})}$, belonging to $\mathcal{F}_{3}$, \cite{S1933}.

Consider  a geometric Seifert fibered conemanifold structure on $M\in \mathcal{F}$ such that (i) the singular set $L$, if any, consists of fibres (singular or not); and (ii) the union of $L$ with the set of exceptional fibres  has less or equal to three components.

Then the geometry on $M$ is \emph{compatible} with the fibred Seifert structure. That is, the fibres are geodesics and the orbit space $\mathbf{B}$ of $M^{3}$ inherits a 2-dimensional geometric cone manifold structure with spherical, Euclidean or hyperbolic geometry, as the case may be, and the singular set consists  at most of 3 cone-points.

To unify notation, in the case that the Seifert manifold have less than three exceptional fibres, we will add, at the right of the signature, appropriated symbols for general fibres. The signature, written between angle brackets, means that the signature is not normalized (see for instance page 145 in \cite{M1987}. Then, the manifold $M$, objet of our study, has the following signature
\[
M=\left\langle O,o,0\, |\, b;(a_{1},b_{1}),(a_{2},b_{2}),(a_{3},b_{3})\right\rangle ,
\]
where $(a_{i},b_{i})$ are coprime integers, $0<b_{i}<a_{i}$, when $a_i>1$; or $(a_{i},b_{i})=(1,0)$.  We also assume that $a_1\geq a_2\geq a_3$.

The change $\left\{b,(a_i,b_i)\right\}\longrightarrow \left\{b-r,(a_i,b_i+ra_i)\right\}$ in the signature does not change the manifold (\cite{M1987}). Using this, we will often unify the signature for $M$, that we will  use in the sequel, as follows
 \[
M=\left\langle O,o,0\, |\, 0;(a_{1},b_{1}),(a_{2},b_{2}),(a_{3},b_{3}+ba_3)\right\rangle .
\]

\section{The 2-conemanifold structure on the orbit space}\label{s2}

A notation for a 2-dimensional closed orientable orbifold is given by the signature
\[
\left( O ,\, g\, |\, a_1, ...,a_s\right)
\]
where $O$ stands for orientable surface; $g$, for the genus of the surface; and $a_1, ...,a_s$ are the natural numbers associated to the $s$ cone points $(P_1,...,P_s)$, meaning that the angle around the $P_i$ cone point is $2\pi /a_{i}$.

If the manifold $M\in \mathcal{F}$ has a  conemanifold structure with angles $\beta_i=2\alpha_i a_i$ around the $(a_i,b_i)$-fibres, the orbit space $\mathbf{B}$ has a cone manifold structure in the 2-sphere $S^{2}$ with three cone points with angles $(2\alpha_1, 2\alpha_2, 2\alpha_3 )$, where we will assume $0\leq \alpha_i\leq \pi$. By natural generalization of the above orbifold signature to a cone manifold structure, we denoted the induced 2-conemanifold structure on $\mathbf{B}$ by $(O,0\, |\, \pi /\alpha_1,\pi /\alpha_2,\pi /\alpha_3 )$.  The geometry on the 2-conemanifold
\[
(O,0\, |\, \pi /\alpha_1,\pi /\alpha_2,\pi /\alpha_3 )
\]
 depends on the value of  the angles $\alpha_i$.

\begin{proposition}
For $0< \alpha_i< \pi$, the 2-conemanifold $(O,0\, |\, \pi /\alpha_1,\pi /\alpha_2,\pi /\alpha_3 )$ supports spherical, Euclidean or hyperbolic geometry according as  $\alpha_1+\alpha_2+\alpha_3$ is $>,=,<$ than $\pi$. (\cite{T1980}).\qed
\end{proposition}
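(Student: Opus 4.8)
The plan is to realize the $2$-conemanifold $(O,0\,|\,\pi/\alpha_1,\pi/\alpha_2,\pi/\alpha_3)$ as a geodesic triangle doubled along its boundary, and to read off the geometry from the Gauss--Bonnet theorem. First I would recall that a sphere with three cone points of angles $2\alpha_1,2\alpha_2,2\alpha_3$ has an orientation-reversing involution fixing a circle that separates the three cone points into two triangular pieces; cutting along this circle presents the conemanifold as the double of a geodesic triangle $T$ whose three interior angles are $\alpha_1,\alpha_2,\alpha_3$ (each cone angle being split evenly by the mirror). Thus it suffices to decide, for each of the three model geometries $S^2$, $E^2$, $H^2$ of constant curvature $\kappa=+1,0,-1$, when a geodesic triangle with prescribed angles $\alpha_1,\alpha_2,\alpha_3\in(0,\pi)$ exists, and then to check that the doubled surface is genuinely modelled on that geometry (i.e. that the metric extends smoothly, with the prescribed cone angles, across the cone points and across the mirror circle).

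The core computation is Gauss--Bonnet for a geodesic triangle $T$ in a surface of constant curvature $\kappa$:
\[
\kappa\cdot\mathrm{Area}(T)=\alpha_1+\alpha_2+\alpha_3-\pi .
\]
Hence $\alpha_1+\alpha_2+\alpha_3>\pi$ forces $\kappa>0$ and the triangle is spherical; $\alpha_1+\alpha_2+\alpha_3=\pi$ forces $\kappa=0$ and the triangle is Euclidean; $\alpha_1+\alpha_2+\alpha_3<\pi$ forces $\kappa<0$ and the triangle is hyperbolic. Conversely, in each case such a geodesic triangle does exist and is unique up to isometry: in the Euclidean case this is elementary; in the spherical case with $\alpha_i<\pi$ and $\sum\alpha_i>\pi$ one can produce it as the polar-dual of the spherical triangle with side lengths $\pi-\alpha_i$ (one must check the dual side lengths satisfy the spherical triangle inequalities, which follows from $\alpha_i<\pi$ and $\sum\alpha_i>\pi$); in the hyperbolic case existence and uniqueness of a triangle with angles $\alpha_i$ summing to less than $\pi$ is a standard fact (e.g. via a continuity/monotonicity argument on the angle as a function of side length, or by the hyperbolic law of cosines for angles). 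Doubling $T$ along its sides then yields a metric on $S^2$ of the corresponding constant curvature, smooth away from the three vertices, with a totally geodesic mirror circle (so smooth across it) and cone angle exactly $2\alpha_i$ at the $i$-th vertex; this is precisely a geometric conemanifold structure of the claimed type. Since the three conditions on $\sum\alpha_i$ are mutually exclusive and exhaustive, the geometry is determined, giving the trichotomy.

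The main obstacle is not the Gauss--Bonnet inequality, which is immediate, but the \emph{converse}: producing the geodesic triangle with the prescribed angles in the spherical and hyperbolic cases and confirming it is the right building block. In the spherical case the delicate point is verifying that the constraints $0<\alpha_i<\pi$ together with $\alpha_1+\alpha_2+\alpha_3>\pi$ are exactly what is needed for the polar-dual construction to go through (equivalently, that no further inequality among the $\alpha_i$ is required); in the hyperbolic case one needs the existence statement for triangles with arbitrary angle data summing to less than $\pi$. Both are classical, so I would cite Thurston's notes \cite{T1980} for the precise statements rather than reprove them, and simply assemble the doubling argument and the Gauss--Bonnet dichotomy around them. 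One should also note the boundary behaviour ruled out by the hypothesis $0<\alpha_i<\pi$: if some $\alpha_i=0$ the vertex becomes a cusp or a cone point of angle $0$ and the triangle degenerates, and if some $\alpha_i\ge\pi$ the "triangle" is no longer convex/embedded — these degenerate regimes are exactly why the proposition is stated in the open range, and they will be treated separately in the sections that follow.
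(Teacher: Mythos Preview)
Your proposal is correct; the Gauss--Bonnet dichotomy together with the doubling of a geodesic triangle $T_{(\alpha_1,\alpha_2,\alpha_3)}$ is exactly the right mechanism. The paper does not actually prove this proposition---it is stated with a bare \qed and a citation to Thurston---but your doubling construction is precisely the one the paper invokes a few lines later in the proof of Theorem~\ref{tbase3} (see Figure~\ref{fpqadoblealfas}), so your argument is fully aligned with the paper's viewpoint and simply fills in what the authors leave to the reference.
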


In order to study the continuous deformation of the geometric conemanifold structures  in the Seifert manifold $M$, we are interested in the description of the corresponding continuous family of geometric conemanifold structures in the orbit space $\mathbf{B}$ of the Seifert manifold. It is clear that the Euclidean case $\alpha_1+\alpha_2+\alpha_3=\pi$ is a limit case between the spherical and the hyperbolic case. We are interested in  all the possible limits of sphericity or hyperbolicity, for all possible values $0\leq \alpha_i\leq \pi$.

The  hyperbolic metric in the Poincar\'{e} disc model, the open unit disc
\[
D_{1}=\left\{ z=x+iy \,|\, x^{2}+y^{2}<1\right\}
\]
is given by
\[
ds^2=\frac{4dzd\overline{z}}{(1-z\overline{z})^{2}}.
\]

In order to study the degeneration on geometric structures, it is more convenient to work with a disc in $\mathbb{C}$ with radius $\frac{1}{\sqrt{S}}$. Then the dilatation
 \[
\begin{array}{cccc}
  \lambda :& D_{\frac{1}{\sqrt{S}}} & \longrightarrow & D_{1} \\
 & z & \to & \sqrt{S}z
\end{array}
\]
is an isometry if and only if the metric on $D_{\frac{1}{\sqrt{S}}}$ is given by
\[ ds^2=\frac{4Sdzd\overline{z}}{(1-Sz\overline{z})^{2}},\quad 1\geq S>0 \]

\begin{figure}[h]
\begin{center}\epsfig{file=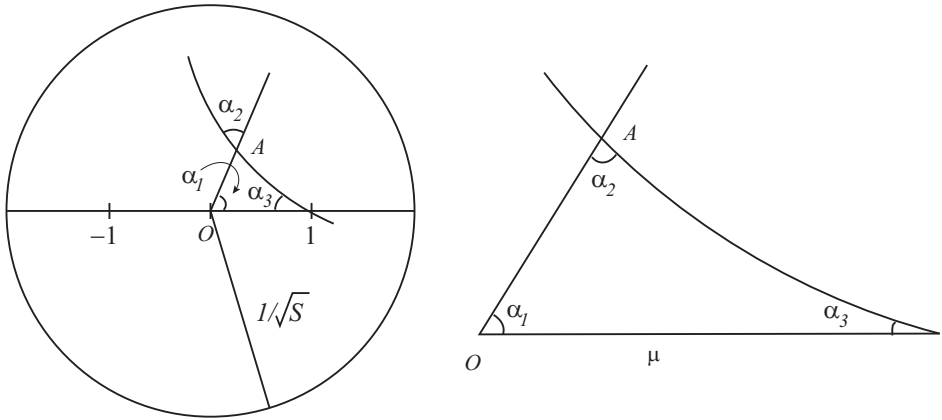,height=5.5cm}\caption{The hyperbolic plane and a hyperbolic triangle.}\label{fpqahyper}
\end{center}
\end{figure}

Let $T_{(\alpha_1,\alpha_2,\alpha_3)}$, $0<\alpha_1,\alpha_2,\alpha_3<\pi$, be a hyperbolic triangle. Figure \ref{fpqahyper} shows the triangle $T_{(\alpha_1,\alpha_2,\alpha_3)}$ with  the $\alpha_1$ and $\alpha_3$ angles in the vertices placed at the points $0$ and $1$ respectively and with the $\alpha_2$ angle at the remaining vertex. Let us relate the angles with the parameter $S$, using the trigonometric formulas for hyperbolic triangles.
\begin{equation*}
    \cosh \mu =\frac{\cos \alpha_2+\cos \alpha_3 \cos \alpha_1}{\sin \alpha_3 \sin  \alpha_1}
\end{equation*}

\begin{eqnarray*}
  \mu &=& \int_{0}^{1}\sqrt{\frac{4S}{(1-St^{2})^{2}}}dt=\int_{0}^{1}\frac{2\sqrt{S}}{(1-St^{2})}dt=\\
   &=& \lg (1+\sqrt{S}t)-\lg (1-\sqrt{S}t)|_{0}^{1}= \lg \frac{1+\sqrt{S}}{1-\sqrt{S}}
\end{eqnarray*}

\begin{equation*}
  \Rightarrow \quad  e^{\mu }=\frac{1+\sqrt{S}}{1-\sqrt{S}}\quad \Rightarrow\quad \cosh \mu =\frac{e^{2\mu} +1}{2e^{\mu }}=\frac{1+S}{1-S}
\end{equation*}
Therefore
\begin{equation*}
\frac{\cos \alpha_2+\cos \alpha_3 \cos \alpha_1}{\sin \alpha_3 \sin  \alpha_1}=\frac{1+S}{1-S}\quad \Rightarrow
S=\frac{\cos \alpha_2+\cos \alpha_1 \cos \alpha_3-\sin \alpha_1 \sin \alpha_3 }{\cos \alpha_2+\cos \alpha_1 \cos \alpha_3+\sin \alpha_1 \sin \alpha_3 }
\end{equation*}
\begin{equation}\label{esalfah}
\boxed{S=\frac{\cos \alpha_2+\cos (\alpha_1 + \alpha_3) }{\cos \alpha_2+\cos (\alpha_1 - \alpha_3) }}
\end{equation}

Analogously, $\mathbb{C}P^{1}$ with the spherical Riemannian metric,
 \[ ds^2=\frac{-4Sdzd\overline{z}}{(1-Sz\overline{z})^{2}},\quad S<0 \]
is the stereographic projection of the sphere $S^{2}$ with radius $\frac{1}{\sqrt{-S}}$ endowed with a Riemannian metric isometric to the usual spherical metric on the unit sphere in $\mathbb{R}^{3}$. The circle of radius $\frac{1}{\sqrt{-S}}$ is the equator. Figure \ref{fpqaspher} shows the spherical triangle $T_{(\alpha_1,\alpha_2,\alpha_3)}$ analogous to the hyperbolic case.
\begin{figure}[h]
\begin{center}\epsfig{file=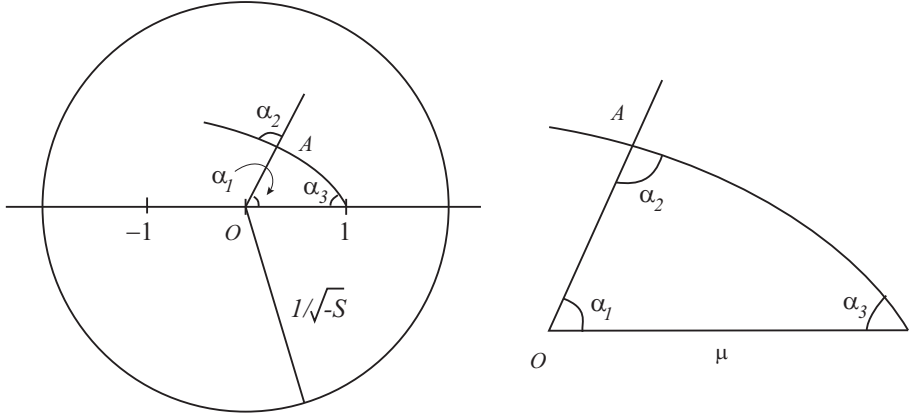,height=5.5cm}\caption{The spherical case.}\label{fpqaspher}
\end{center}
\end{figure}

In this spherical case, applying the cosine rule
\begin{equation*}
    \cos \mu =\frac{\cos \alpha_2+\cos {\alpha_3 }\cos \alpha_1}{\sin \alpha_3 \sin  \alpha_1}
\end{equation*}

\begin{eqnarray*}
  \mu = \int_{0}^{1}\sqrt{\frac{-4S}{(1-St^{2})^{2}}}dt=\int_{0}^{1}\frac{i 2\sqrt{S}}{(1-St^{2})}dt\\
   \Rightarrow \quad -i \mu= \lg (1+\sqrt{S}t)-\lg (1-\sqrt{S}t)|_{0}^{1}= \lg \frac{1+\sqrt{S}}{1-\sqrt{S}}
\end{eqnarray*}

\begin{equation*}
  \Rightarrow \quad  e^{-i\mu }=\frac{1+\sqrt{S}}{1-\sqrt{S}}\quad \Rightarrow\quad \cos \mu =\frac{e^{i\mu} +e^{-i\mu}}{2}=\frac{1+S}{1-S}
\end{equation*}
Therefore as before
\begin{equation*}
\frac{\cos \alpha_2+\cos {\alpha_3 }\cos \alpha_1}{\sin \alpha_3 \sin  \alpha_1}=\frac{1+S}{1-S}\quad \Rightarrow S=\frac{\cos \alpha_2+\cos \alpha_1 \cos \alpha_3-\sin \alpha_1 \sin \alpha_3 }{\cos \alpha_2+\cos \alpha_1 \cos \alpha_3+\sin \alpha_1 \sin \alpha_3 }
\end{equation*}
\begin{equation}\label{esalfas}
\boxed{S=\frac{\cos \alpha_2+\cos (\alpha_1 + \alpha_3) }{\cos \alpha_2+\cos (\alpha_1 - \alpha_3) }}
\end{equation}
Observe that this formula (\ref{esalfas}) for the spherical case, coincides with the formula (\ref{esalfah}) for the hyperbolic case.

The limit cases, when the geometry fails to be hyperbolic or spherical and  becomes Euclidean, are obtained when the radius is $\infty$, that is, when $S=0$. Next we compute the relationship between the values of the angles $\alpha_i$ making $S=0$.
\[
S=0 \quad \Leftrightarrow \quad \left\{ \begin{array}{l}
                                  N(S):= \cos \alpha_2+\cos (\alpha_1 + \alpha_3)=0;\,  \text{and} \\
                                   D(S):=\cos \alpha_2+\cos (\alpha_1 - \alpha_3)\neq 0
                                 \end{array}\right.
\]

Then,
\[
N(S)=0 \quad \Leftrightarrow \quad
\left\{ \begin{array}{l}
\cos \alpha_2=-\cos (\alpha_1 + \alpha_3)=\cos (\pi-\alpha_1 - \alpha_3);
 \quad\text{or}\\
\cos (\alpha_1 + \alpha_3)=-\cos \alpha_2=\cos (\pi -\alpha_2 ).
\end{array}\right.
\]

 This implies that
\begin{equation*}
 \alpha_2= \left\{\begin{array}{l}
\pi-\alpha_1-\alpha_3 \quad \Rightarrow\quad \alpha_1+\alpha_2+\alpha_3=\pi ;
 \quad\text{or}\\
2\pi-(\pi-\alpha_1-\alpha_3)=\pi+\alpha_1+\alpha_3 \quad \Rightarrow\quad -\alpha_1+\alpha_2-\alpha_3=\pi
\end{array}\right.
\end{equation*}
or
\begin{equation*}
 \alpha_1 + \alpha_3= \left\{\begin{array}{l}
\pi-\alpha_2\quad \Rightarrow\quad \alpha_1+\alpha_2+\alpha_3=\pi ;
 \quad\text{or}\\
2\pi-(\pi-\alpha_2)=\pi+\alpha_2 \quad \Rightarrow\quad \alpha_1-\alpha_2+\alpha_3=\pi
\end{array}\right.
\end{equation*}

On the other hand, if $D(S)=0$ when $N(S)=0$,
\[
0=\cos \alpha_2+\cos (\alpha_1 + \alpha_3)=\cos \alpha_2+\cos (\alpha_1 - \alpha_3) \quad \Rightarrow
\]
\[
\cos (\alpha_1 + \alpha_3)=\cos (\alpha_1 - \alpha_3)\quad \Rightarrow\quad \alpha_1 + \alpha_3=
\left\{ \begin{array}{l}
\alpha_1 -\alpha_3 \Rightarrow \alpha_3=0;
 \quad\text{or}\\
2\pi-(\alpha_1- \alpha_3)\Rightarrow \alpha_1=\pi
\end{array}\right\}
\]

For $\alpha_3=0$, $S=1$ and the triangle is hyperbolic. For $\alpha_1=\pi$, $S=1$ and the triangle is spherical. In this case, $\alpha_2=\alpha_3$. See Figure \ref{funopi}.

\begin{figure}[h]
\begin{center}\epsfig{file=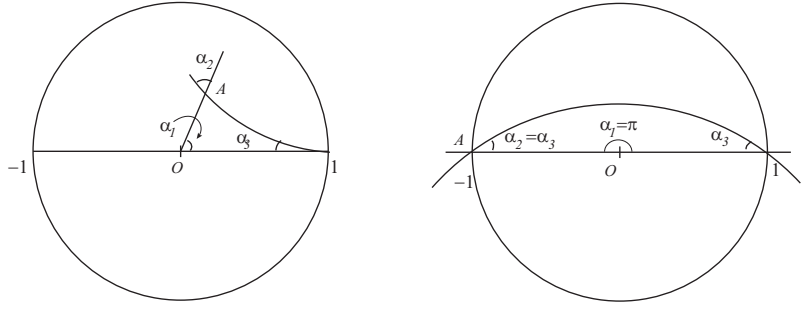,height=4cm}
\caption{The cases $\alpha_3=0$ and $\alpha_1=\pi$}\label{funopi}\end{center}
\end{figure}

Then $S=0$ for $\alpha_3\neq 0$, $\alpha_1\neq \pi$ and

\begin{equation}\label{solucionalfas}
\begin{array}{lr}
 p_1:\quad& \alpha_1+\alpha_2+\alpha_3=\pi ;\,\text{or} \\
 p_2:\quad& -\alpha_1+\alpha_2-\alpha_3=\pi ;\,\text{or} \\
p_3:\quad& \alpha_1-\alpha_2+\alpha_3=\pi .
\end{array}
\end{equation}

Consider the 3-dimensional space with coordinates $(\alpha_1,\alpha_2,\alpha_3)$. The points in this space defining the values for the angles of a possible triangle belong to the cube $[0,\pi]\times  [0,\pi]\times [0,\pi]$. Observe that if the point $(\alpha_1,\alpha_2,\alpha_3)$ defines an oriented triangle, the points $(\alpha_2,\alpha_3,\alpha_1)$ and $(\alpha_3,\alpha_1,\alpha_2)$ define the same oriented triangle up to orientation preserving isometry. Therefore all the angle conditions and equations of planes, that delimit regions containing points corresponding to triangles in the same geometry, must have a 3-cyclic symmetrical position around the diagonal line $(\alpha,\alpha,\alpha)$ with vertices $(0,0,0),\,(\pi ,\pi ,\pi)$. They are
\begin{equation}\label{solucionalfas2}
\left\{ \begin{array}{lr}
&\alpha_2\neq 0,\quad \alpha_3\neq \pi ; \,\text{and}\\
p_4: \quad& \alpha_1-\alpha_2-\alpha_3=\pi ;\,\text{or}\\
p_5: \quad& -\alpha_1+\alpha_2+\alpha_3=\pi .
\end{array}\right.
\end{equation}
\begin{equation}\label{solucionalfas3}
\left\{\begin{array}{lr}
&\alpha_1\neq 0,\quad \alpha_2\neq \pi ; \,\text{and}\\
p_6: \quad& -\alpha_1-\alpha_2+\alpha_3=\pi ;\,\text{or} \\
p_7: \quad& \alpha_1+\alpha_2-\alpha_3=\pi .
\end{array}\right.
\end{equation}

The first equation, $p_1$, in (\ref{solucionalfas}), $\alpha_1+\alpha_2+\alpha_3=\pi$, defines a plane which intersects the cube in the triangle with vertices $ (\pi,0,0),(0,0,\pi),(0 ,\pi ,0 )$. See Figure \ref{fregionesalfaEH}. Each point in the interior of this triangle defines a Euclidean triangle in the Euclidean space. The points in the cube at the side $\alpha_1+\alpha_2+\alpha_3<\pi$ define  hyperbolic triangles; and at side $\alpha_1+\alpha_2+\alpha_3>\pi$ the points may define a spherical triangle as we presently explain.

\begin{figure}[h]
\begin{center}\epsfig{file=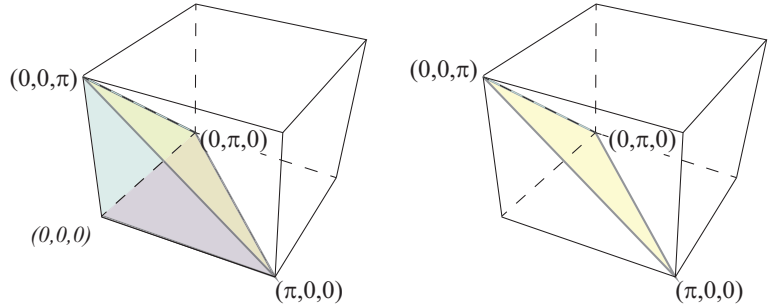,height=4cm}
\caption{The hyperbolic region and the Euclidean case}\label{fregionesalfaEH}\end{center}
\end{figure}

 The planes defined by equations  $p_2$, $p_4$ and $p_6$ in (\ref{solucionalfas}), (\ref{solucionalfas2}) and (\ref{solucionalfas3}), intersect the cube just in a vertex. They do not delimite any region in the cube.

The planes defined by the equations $p_3$, $p_5$ and $p_7$ intersect the cube in the triangles with vertices
\[
\begin{array}{ccc}
p_3:&\quad &(\pi,0,0),(0,0,\pi),(\pi ,\pi ,\pi )\\
p_5:&\quad &(0,\pi,0),(0,0,\pi),(\pi ,\pi ,\pi )\\
p_7:&\quad &(\pi,0,0),(0,\pi,0),(\pi ,\pi ,\pi )
\end{array}
\]

\begin{figure}[h]
\begin{center}\epsfig{file=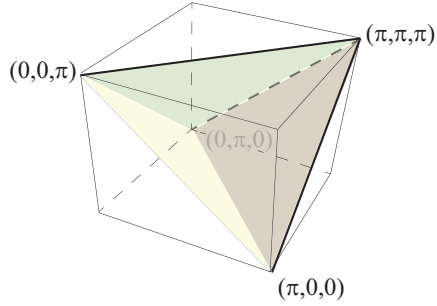,height=4cm}
\caption{The spherical region}\label{fregionesalfaS}\end{center}
\end{figure}

Points in the interior of the tetrahedron delimited by $p_1,\,p_3,\,p_5,\,p_7$, with vertices $(\pi,0,0),(0,\pi,0),(0,0,\pi),(\pi ,\pi ,\pi )$, represent spherical triangles. (See Figure \ref{fregionesalfaS}).

Points in the interior of the faces $p_3,\,p_5,\,p_7$ of this tetrahedon  correspond to the limit of  spherical triangles in Euclidean space ($S=0$). The limit is no longer a triangle (in  Euclidean space) because the sum of the three coordinates $\alpha_1$, $\alpha_2$ and $\alpha_3$ is bigger than $\pi$. The points lying on these faces represent \emph{upper limits of sphericity}. The face $p_1$ is the \emph{lower limit of sphericity}. (Compare \cite{LM2015}).

Points in the edges  $l_1=\left( (\pi,0,0),(\pi ,\pi ,\pi )\right)$, $l_2=\left( (0,\pi,0),(\pi ,\pi ,\pi )\right)$ and $l_3=\left( (0,0,\pi),(\pi ,\pi ,\pi )\right)$, correspond to spherical triangles $T_{(\pi ,\alpha ,\alpha )}$, $T_{(\alpha ,\pi ,\alpha )}$ and $T_{(\alpha,\alpha ,\pi )}$, respectively.

\begin{figure}[h]
\begin{center}\epsfig{file=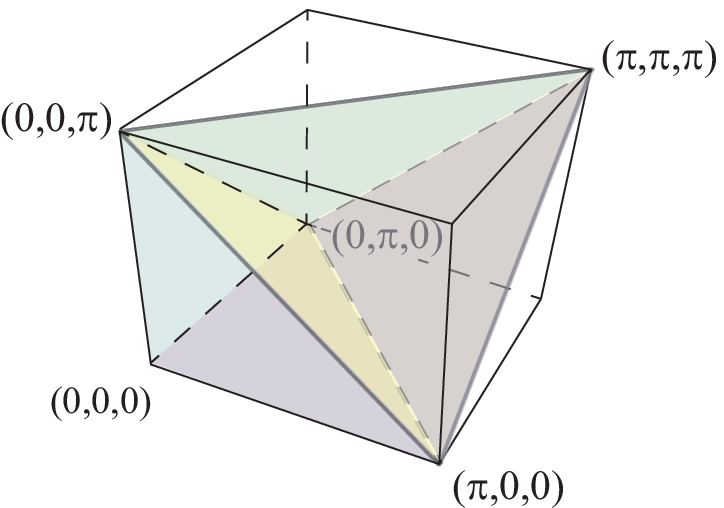,height=4cm}
\caption{}\label{fregionesalfa}\end{center}
\end{figure}

We collect in the following lemma the kind of geometry possessed by the elements of the continuous family $\left\{ T_{(\alpha_1,\alpha_2,\alpha_3)}\right\}$ of triangles.

\begin{lemma}
The  triangle  $T_{(\alpha_1,\alpha_2,\alpha_3)}$  is hyperbolic  for points in the closed tetrahedron
\[
t_h=\left( (0,0,0),(\pi,0,0),(0,\pi,0),(0,0,\pi)\right),
\]
excepting  the points in the face
\[f_e= \left((\pi,0,0),(0,\pi,0),(0,0,\pi)\right).\]

Points in the interior of the face $f_e$ correspond to  Euclidean triangles. This face is the limit of hyperbolicity and the lower limit of sphericity.

The triangle $T_{(\alpha_1,\alpha_2,\alpha_3)}$ is spherical for points in the interior of the tetrahedron  \[
t_s=\left((\pi,0,0),(0,\pi,0),(0,0,\pi),(\pi ,\pi ,\pi )\right).
 \]

 For points in the interior of the faces
\begin{eqnarray*}
 f_1&=&\left((0,\pi,0),(0,0,\pi),(\pi ,\pi ,\pi )\right), \\
f_2&=&\left((\pi,0,0),(0,0,\pi),(\pi ,\pi ,\pi )\right) \, \text{and} \\
 f_3&=&\left((\pi,0,0),(0,\pi,0),(\pi ,\pi ,\pi )\right)
\end{eqnarray*}
   the geometry is Euclidean but $T_{(\alpha_1,\alpha_2,\alpha_3)}$ is not a Euclidean triangle.

   Points in the edges  \begin{eqnarray*}
     l_1 &=& \left( (\pi,0,0),(\pi ,\pi ,\pi )\right), \\
     l_2 &=& \left( (0,\pi,0),(\pi ,\pi ,\pi )\right)\, \text{and} \\
     l_3 &=& \left( (0,0,\pi),(\pi ,\pi ,\pi )\right)
     \end{eqnarray*}
     correspond to spherical triangles. Figure \ref{fregionesalfa}. \qed
\end{lemma}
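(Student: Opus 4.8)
The plan is to read the geometry of $T_{(\alpha_1,\alpha_2,\alpha_3)}$ off the sign of the parameter $S$ of formula (\ref{esalfas}) (which coincides with (\ref{esalfah})), using the trichotomy of the Proposition and the classical existence criterion for spherical triangles, and then to treat the degenerate strata directly. Recall $S>0$ gives a hyperbolic triangle, $S=0$ the Euclidean limit and $S<0$ a spherical triangle. The computation already carried out shows that, inside the cube and away from the locus $D(S)=\cos\alpha_2+\cos(\alpha_1-\alpha_3)=0$, the numerator $N(S)=\cos\alpha_2+\cos(\alpha_1+\alpha_3)$ vanishes exactly on the planes $p_1$, $p_2$, $p_3$; invoking the $3$-cyclic symmetry $(\alpha_1,\alpha_2,\alpha_3)\mapsto(\alpha_2,\alpha_3,\alpha_1)$ of the triangle --- which forces the same statement for the two cyclically rotated normalizations --- the complete set of ``walls'' across which the geometry can change is $p_1\cup p_3\cup p_5\cup p_7$, the planes $p_2,p_4,p_6$ meeting the cube only in a vertex. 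As observed in the text, these four planes are precisely the facet-planes of $t_s$, with $f_e\subset p_1$ the face $t_s$ shares with $t_h$; hence $S$ has constant sign on each of the open regions $t_h^{\circ}$ and $t_s^{\circ}$ and vanishes on $\mathrm{int}(f_e)$, and it suffices to evaluate $S$ once in each.

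I would first dispose of the open regions. On the side $\alpha_1+\alpha_2+\alpha_3<\pi$ of $p_1$, which inside the cube is exactly $t_h^{\circ}$, the Proposition gives a hyperbolic triangle, consistent with $0<S<1$ (the derivation of $S$ places a vertex at $z=1$, which must lie strictly inside the disc of radius $1/\sqrt S$; e.g. $S\approx 0.414$ at $(\pi/4,\pi/4,\pi/4)$). On $p_1$, i.e. $\mathrm{int}(f_e)$, the Proposition gives an honest Euclidean triangle and $S=0$. On the side $\alpha_1+\alpha_2+\alpha_3>\pi$ a genuine spherical triangle with the prescribed angles exists if and only if the supplementary angles $\pi-\alpha_i$ satisfy the triangle inequalities $\pi-\alpha_i<(\pi-\alpha_j)+(\pi-\alpha_k)$, i.e. $-\alpha_i+\alpha_j+\alpha_k<\pi$ for each $i$ (the classical polar-triangle criterion); together with $\alpha_1+\alpha_2+\alpha_3>\pi$ this cuts out exactly $t_s^{\circ}$, and there $S<0$, as $S=-1$ at $(\pi/2,\pi/2,\pi/2)$ (the spherical octant) confirms. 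This settles the hyperbolic, Euclidean-triangle and spherical-triangle assertions.

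Next come the degenerate strata. On the interior of each of the faces $f_1\subset p_5$, $f_2\subset p_3$, $f_3\subset p_7$ of $t_s$, one supplementary triangle inequality becomes an equality, so no genuine spherical triangle exists, yet still $\alpha_1+\alpha_2+\alpha_3>\pi$; approaching such a face from $t_s^{\circ}$, the appropriate cyclically rotated form of $S$ tends to $0$, so the geometry becomes Euclidean in the limit while the limiting object is not a Euclidean triangle, which is precisely the wording of the statement. On the edges $l_1,l_2,l_3$ of $t_s$ through $(\pi,\pi,\pi)$ one has, say on $l_1$, $\alpha_1=\pi$ and hence $\alpha_2=\alpha_3$; there $N(S)$ and $D(S)$ both vanish and the indeterminacy is resolved exactly as in the text for $\alpha_1=\pi$, giving $S=1$ and a spherical figure (the lune with common angle $\alpha_2=\alpha_3$), while cyclic symmetry handles $l_2,l_3$. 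Finally the remaining boundary strata of $t_h$ are the portions of the coordinate hyperplanes $\alpha_i=0$ lying in $t_h$ (to which the Proposition does not directly apply): there the formula gives $0<S\le 1$, so a hyperbolic triangle, now with an ideal vertex where the angle is $0$ --- alternatively this follows by continuity from $t_h^{\circ}$, using that hyperbolic triangles with ideal vertices exist. The lower-dimensional strata of $t_h$ and the edges of $f_e$ then follow by continuity of $S$, the $0/0$ loci being assigned the value forced by the limit from inside the cube. Collecting the cases and using the $3$-cyclic symmetry to reduce to one representative of each orbit yields exactly the partition in the statement.

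The real obstacle I expect is not any single trigonometric identity but the consistent handling of the degenerate strata $f_1,f_2,f_3$ and $l_1,l_2,l_3$: one must identify the limiting object each carries (a cusped hyperbolic triangle, a Euclidean non-triangular limit, a spherical lune), must pass to the correct cyclically rotated normalization so that the relevant $S$ genuinely tends to $0$ (the originally derived $S$, which singles out $\alpha_2$, need not), and must resolve the $0/0$ indeterminacies compatibly. The second delicate point is verifying that the four planes $p_1,p_3,p_5,p_7$ genuinely separate the relevant part of the cube into $t_h^{\circ}$, $\mathrm{int}(f_e)$ and $t_s^{\circ}$ --- in particular that $D(S)$ does not vanish inside $t_h^{\circ}$ or $t_s^{\circ}$ --- so that the ``constant sign on each component'' argument is legitimate.
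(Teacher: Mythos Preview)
Your proposal is correct and follows essentially the same route as the paper: the lemma is stated with a \qed{} and no separate proof because it simply collects the conclusions of the preceding computation of $S$, the analysis of $N(S)=0$ and $D(S)=0$, and the identification of the planes $p_1,\dots,p_7$ and their intersections with the cube. Your write-up is a faithful and somewhat more careful reconstruction of that discussion; the one genuine addition is your explicit appeal to the polar-triangle criterion to characterize $t_s^{\circ}$ as exactly the locus where a spherical triangle with the prescribed angles exists, which the paper uses only implicitly.
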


\begin{figure}[h]
\begin{center}\epsfig{file=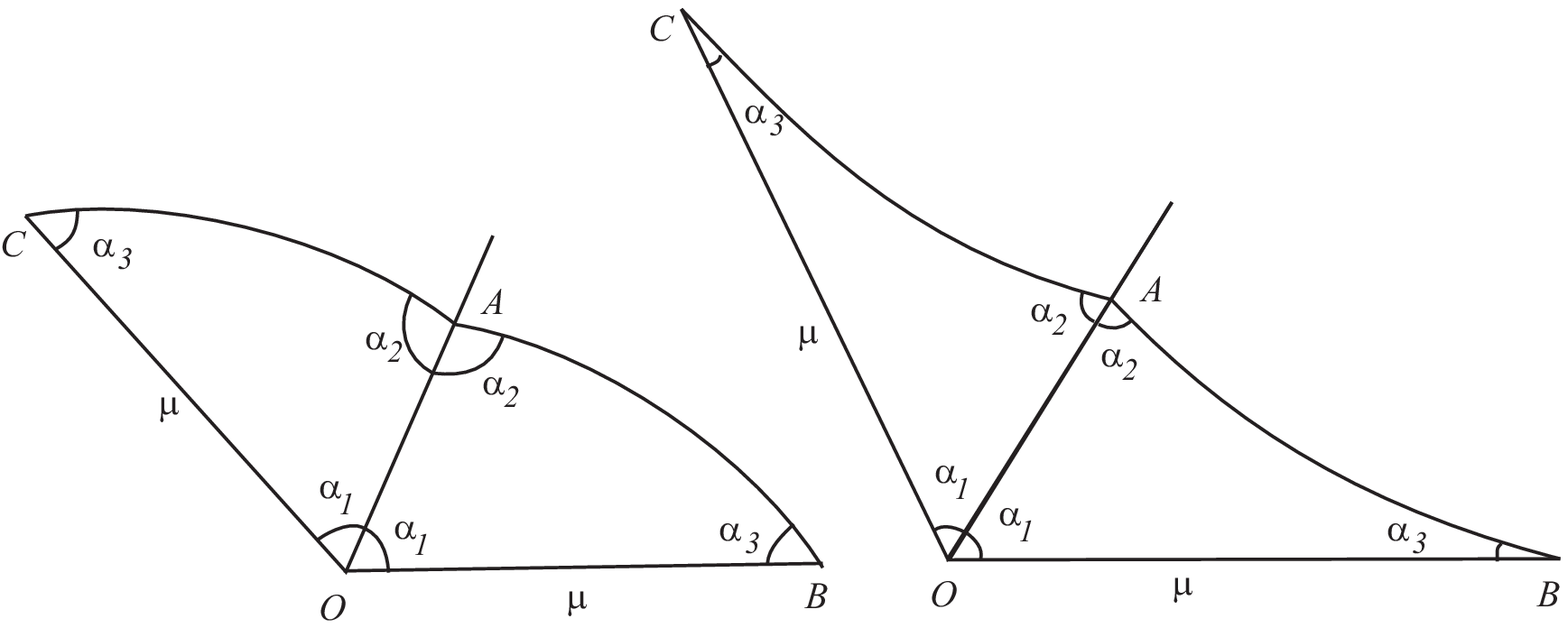,height=5cm}
\caption{}\label{fpqadoblealfas}\end{center}
\end{figure}

\begin{theorem}\label{tbase3}
There exists a  continuous family,  depending on three parameters, of geometric cone manifold structures,
\[
(O,0\, |\, \pi /\alpha_1,\pi /\alpha_2,\pi /\alpha_3 ),
 \]in the 2-sphere $S^{2}$ with three conic points with angles $(2\alpha_1,2\alpha_2, 2\alpha_3 )$ such that the geometry is
 \begin{itemize}
   \item hyperbolic for  $0\leq \alpha_1+\alpha_2+\alpha_3 <\pi$,
   \item Euclidean for  $\alpha_1+\alpha_2+\alpha_3 =\pi $, $\alpha_i>0$,
   \item spherical for
   \begin{itemize}
     \item the interior of the tetrahedon
   \[
   t_s=\left((\pi,0,0),(0,\pi,0),(0,0,\pi),(\pi ,\pi ,\pi )\right).
   \]
     \item the edges of $t_s$
     \begin{eqnarray*}
     l_1 &=& \left( (\pi,0,0),(\pi ,\pi ,\pi )\right) \\
     l_2 &=& \left( (0,\pi,0),(\pi ,\pi ,\pi )\right) \\
     l_3 &=& \left( (0,0,\pi),(\pi ,\pi ,\pi )\right)
     \end{eqnarray*}
   The faces of this tetrahedron define the limits of sphericity.
   \end{itemize}
 \end{itemize}
\end{theorem}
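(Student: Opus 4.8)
The plan is to realize the 2-conemanifold $(O,0\,|\,\pi/\alpha_1,\pi/\alpha_2,\pi/\alpha_3)$ as the \emph{metric double} of the triangle $T_{(\alpha_1,\alpha_2,\alpha_3)}$ across its three geodesic sides, as suggested by Figure~\ref{fpqadoblealfas}. Concretely, for a parameter point $(\alpha_1,\alpha_2,\alpha_3)\in[0,\pi]^3$ let $X^2$ be the model plane ($S^2$, $E^2$ or $H^2$) in which a triangle with these angles exists, take two isometric copies $T,T'$ of $T_{(\alpha_1,\alpha_2,\alpha_3)}$ in $X^2$ and glue them by the identity along $\partial T=\partial T'$. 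Each side of $T$ is a geodesic arc and the gluing map is the isometric reflection in it, so by Schwarz reflection the resulting metric is smooth across the interior of every glued side, whence these arcs are geodesics of the doubled surface; at the image of the $i$-th vertex the two copies of the angle $\alpha_i$ fit together to give a cone point of total angle $2\alpha_i$. The double is topologically $S^2$ with three marked points, so it carries precisely the structure $(O,0\,|\,\pi/\alpha_1,\pi/\alpha_2,\pi/\alpha_3)$, and its geometry is the geometry of the triangle. This reduces the theorem to the question: for which $(\alpha_1,\alpha_2,\alpha_3)$ does a triangle with these angles exist, and in which geometry --- which is exactly the content of the Lemma.

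The classification of geometries is then a direct transcription of the Lemma, recalling the description of the strata worked out before it. The hyperbolic range is the closed tetrahedron $t_h$ minus its open face $f_e=((\pi,0,0),(0,\pi,0),(0,0,\pi))$, i.e.\ $0\le\alpha_1+\alpha_2+\alpha_3<\pi$ (ideal vertices being allowed when some $\alpha_i=0$); the plane $p_1\colon\alpha_1+\alpha_2+\alpha_3=\pi$ cuts out $f_e$, whose interior (all $\alpha_i>0$) is the Euclidean locus. For the spherical region one uses the computation of $\{S=0\}$ from (\ref{esalfas}): $S=0$ exactly when $N(S)=\cos\alpha_2+\cos(\alpha_1+\alpha_3)$ vanishes while $D(S)=\cos\alpha_2+\cos(\alpha_1-\alpha_3)$ does not, and $N(S)=0$ forces one of the planes $p_1,\dots,p_7$; of these $p_2,p_4,p_6$ meet $[0,\pi]^3$ only in a vertex and are irrelevant, while $p_1,p_3,p_5,p_7$ bound the tetrahedron $t_s=((\pi,0,0),(0,\pi,0),(0,0,\pi),(\pi,\pi,\pi))$. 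Its interior is the set where $\alpha_1+\alpha_2+\alpha_3>\pi$ together with the three inequalities $\alpha_j+\alpha_k<\pi+\alpha_i$ --- precisely the classical conditions for the existence of a spherical triangle (seen, for instance, by passing to the polar triangle) --- so there the double is spherical. On the three edges $l_1,l_2,l_3$ of $t_s$ one angle equals $\pi$ and the other two coincide (the case $\alpha_1=\pi$ treated just before the Lemma); the triangle degenerates to a lune, but its double is an honest round sphere carrying the prescribed cone points, so these edges are spherical too. Finally the faces $f_1,f_2,f_3$ of $t_s$ are the loci $S=0$ with angle sum still $>\pi$, where no Euclidean triangle exists: they are the upper limits of sphericity, $f_e$ being the lower limit. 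This is exactly the list in the statement.

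For the clause ``continuous family depending on three parameters'' it suffices to note that, in each geometry, a triangle is determined up to isometry by its angles (rigidity) and its side lengths depend real-analytically on the angles through the cosine rule, or equivalently that (\ref{esalfah}) and (\ref{esalfas}) give the single expression $S=(\cos\alpha_2+\cos(\alpha_1+\alpha_3))/(\cos\alpha_2+\cos(\alpha_1-\alpha_3))$, a rational function of $(\alpha_1,\alpha_2,\alpha_3)$, valid for both $0<S\le1$ (hyperbolic) and $S<0$ (spherical); the model metric $4S\,dz\,d\bar z/(1-Sz\bar z)^2$ on the disc of radius $1/\sqrt{|S|}$ therefore varies continuously, with the Euclidean locus appearing as the limit $S\to0$ after the rescaling $z\mapsto z/\sqrt{|S|}$. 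Since doubling is continuous (say in the Gromov--Hausdorff sense), the family $\{(O,0\,|\,\pi/\alpha_1,\pi/\alpha_2,\pi/\alpha_3)\}$ depends continuously on $(\alpha_1,\alpha_2,\alpha_3)$, as asserted.

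The main obstacle --- the only genuinely non-formal points --- is the careful treatment of degeneracies: first, one must check that the metric double of a geometric triangle really is a \emph{smooth} geometric cone manifold, with the metric regular across the interiors of the glued edges and cone angle exactly $2\alpha_i$ at each vertex; and second, one must handle the boundary $\partial([0,\pi]^3)$ and the faces and edges of $t_s$, where triangles degenerate (ideal vertices when some $\alpha_i=0$, where $S=1$; lunes when some $\alpha_i=\pi$), verifying that the doubled objects are still the claimed geometric cone manifolds --- or, on the limit-of-sphericity faces, their geometric limits --- and that these limits are consistent with the formula for $S$. Everything else is bookkeeping on top of the Proposition and the Lemma already established.
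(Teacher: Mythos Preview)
Your proposal is correct and follows essentially the same approach as the paper: both construct the cone manifold as the metric double of the triangle $T_{(\alpha_1,\alpha_2,\alpha_3)}$ and then read off the geometry from the Lemma. Your treatment is considerably more detailed than the paper's --- you explicitly address smoothness across the glued edges, the continuity of the family via the formula for $S$, and the degenerate boundary cases --- whereas the paper's proof simply states the doubling construction (gluing two copies along $\overline{OB}\equiv\overline{OC}$ and $\overline{AB}\equiv\overline{AC}$) and notes that on the faces $f_1,f_2,f_3$ no Euclidean triangle exists so no cone-manifold structure is obtained there.
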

\begin{proof}
The cone manifold $(O,0\, |\, \pi /\alpha_1,\pi /\alpha_2,\pi /\alpha_3 )$ is obtained from the union of the two left (resp. right) triangles  in Figure \ref{fpqadoblealfas} by the isometric identifications $\overline{OB}\equiv \overline{OC}$ and $\overline{AB}\equiv \overline{AC}$, in the spherical (resp. Hyperbolic) case.

For points in the interior of the faces  $f_1=\left((0,\pi,0),(0,0,\pi),(\pi ,\pi ,\pi )\right)$, \newline $f_2=\left((\pi,0,0),(0,0,\pi),(\pi ,\pi ,\pi )\right)$ and $f_3=\left((\pi,0,0),(0,\pi,0),(\pi ,\pi ,\pi )\right)$  the geometry is Euclidean but $T_{(\alpha_1,\alpha_2,\alpha_3)}$ is no longer a triangle, therefore the geometric cone manifold structures do not exist.  Figure \ref{fregionesalfa}.
\end{proof}

 We are also interested in the particular cases with only two or one variable angles.

 \begin{figure}[h]
\begin{center}\epsfig{file=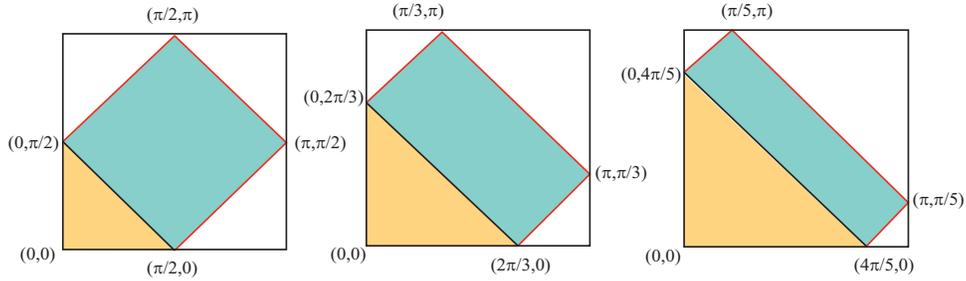,height=3.7cm}
\caption{Cases $a_1=2,3,5$}\label{dosalfas}\end{center}
\end{figure}

\begin{theorem}\label{tbase2}
For each natural number $a_1$, $1<a_1$, there exists a bi-parametric continuous family of geometric cone manifold structures,
\[(O,0\, |\, a_1,\pi /\alpha_2,\pi /\alpha_3 ),
 \]
 in the 2-sphere $S^{2}$, with three conic points with angles $(\frac{2\pi }{a_1},2\alpha_2, 2\alpha_3 )$ such that the geometry is
 \begin{itemize}
               \item hyperbolic for  $0\leq \alpha_2+\alpha_3 <\frac{a_1-1}{a_1}\pi$,
               \item Euclidean for  $\alpha_2+\alpha_3 =\frac{a_1-1}{a_1}\pi$,  $\alpha_i>0$,
               \item spherical for
               \begin{itemize}
               \item points in the interior of the rectangle, where
               \[ \left\{ \begin{array}{l}
\frac{a_1-1}{a_1}\pi < \alpha_2+\alpha_3 < \frac{a_1+1}{a_1}\pi, \, \text{and}  \\
\frac{1-a_1}{a_1}\pi < \alpha_2-\alpha_3 < \frac{a_1-1}{a_1}\pi.
\end{array}\right.  \]
             The edges of this rectangle
define the limits of sphericity for the 2-conemanifold $(O,0\, |\, a_1,\pi /\alpha_2,\pi /\alpha_3 )$.
\item The vertices $(\pi ,\pi /a_1)$ and $(\pi /a_1,\pi )$ of this rectangle  define the spherical orbifold $(O,0\, |\, a_1,a_1 )$ with two conic points.
    \end{itemize}
\end{itemize}
The case $a_1=1$  gives the continuous family of spherical cone manifold structures,
\[
(O,0\, |\, \pi /\alpha,\pi /\alpha ),
\] in the 2-sphere $S^{2}$, with two conic points with equal angle $2\alpha$, $0<\alpha \leq \pi$.
\end{theorem}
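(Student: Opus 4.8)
The plan is to obtain Theorem \ref{tbase2} as the restriction of Theorem \ref{tbase3} to the affine plane $\{\alpha_1=\pi/a_1\}$ inside the cube $[0,\pi]^3$. Indeed, the $2$-conemanifold $(O,0\,|\,a_1,\pi/\alpha_2,\pi/\alpha_3)$, with cone angles $(2\pi/a_1,2\alpha_2,2\alpha_3)$, is exactly the member $(O,0\,|\,\pi/\alpha_1,\pi/\alpha_2,\pi/\alpha_3)$ of the three-parameter family of Theorem \ref{tbase3} for which $\alpha_1=\pi/a_1$, a fixed value lying in $(0,\pi/2]$ when $a_1\ge 2$ and equal to $\pi$ when $a_1=1$. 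Hence the existence of the structure and its geometric type are read off from the position of $(\pi/a_1,\alpha_2,\alpha_3)$ in the cube, and what remains is to slice each region appearing in Theorem \ref{tbase3} by the plane $\{\alpha_1=\pi/a_1\}$ and rewrite the outcome in the coordinates $(\alpha_2,\alpha_3)$.

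First I would treat $a_1\ge 2$. Substituting $\alpha_1=\pi/a_1$ into the hyperbolic condition $\alpha_1+\alpha_2+\alpha_3<\pi$ gives $\alpha_2+\alpha_3<\tfrac{a_1-1}{a_1}\pi$, and into the Euclidean condition $\alpha_1+\alpha_2+\alpha_3=\pi$ (with $\alpha_i>0$) gives $\alpha_2+\alpha_3=\tfrac{a_1-1}{a_1}\pi$. For the spherical part I would intersect the tetrahedron $t_s$ with $\{\alpha_1=\pi/a_1\}$ using its four bounding planes $p_1:\alpha_1+\alpha_2+\alpha_3=\pi$, $p_3:\alpha_1-\alpha_2+\alpha_3=\pi$, $p_5:-\alpha_1+\alpha_2+\alpha_3=\pi$, $p_7:\alpha_1+\alpha_2-\alpha_3=\pi$, with $t_s$ lying on the sides $\ge,\le,\le,\le$ respectively (checked on the centroid $(\pi/2,\pi/2,\pi/2)$). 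Setting $\alpha_1=\pi/a_1$ turns these four conditions into $\tfrac{a_1-1}{a_1}\pi\le\alpha_2+\alpha_3\le\tfrac{a_1+1}{a_1}\pi$ and $|\alpha_2-\alpha_3|\le\tfrac{a_1-1}{a_1}\pi$, which is precisely the rectangle of the statement; a one-line computation shows $0\le\alpha_2,\alpha_3\le\pi$ on it, so the ambient cube truncates nothing. Since the normal $(1,0,0)$ of $\{\alpha_1=\pi/a_1\}$ is not proportional to the normal of any $p_i$, the slicing plane is transverse to every face of $t_s$ and misses all four vertices; hence the slice of the open interior of $t_s$ is exactly the open rectangle, which is therefore spherical, and the four closed edges of the rectangle are the slices of the faces $p_1=f_e$ (the lower, genuinely Euclidean limit) and $p_5=f_1$, $p_3=f_2$, $p_7=f_3$ (the three upper limits, Euclidean but not triangles), which is the assertion that the edges are the limits of sphericity.

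Next I would locate the two distinguished vertices. The edges $l_2=\left((0,\pi,0),(\pi,\pi,\pi)\right)$ and $l_3=\left((0,0,\pi),(\pi,\pi,\pi)\right)$ of $t_s$, spherical by Theorem \ref{tbase3}, are parametrized by $(t,\pi,t)$ and $(t,t,\pi)$; their points with $\alpha_1=\pi/a_1$ are $(\alpha_2,\alpha_3)=(\pi,\pi/a_1)$ and $(\pi/a_1,\pi)$, which are two opposite corners of the rectangle. At either one, one of the cone angles $2\alpha_2,2\alpha_3$ equals $2\pi$, i.e.\ is a smooth point, so the structure reduces to the spherical $2$-orbifold with two cone points of order $a_1$, namely $(O,0\,|\,a_1,a_1)$; the remaining two corners have $\alpha_2=0$ or $\alpha_3=0$, a degenerate cone angle, and are discarded. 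Finally, for $a_1=1$ the plane is $\{\alpha_1=\pi\}$, which meets $t_s$ exactly along the edge $l_1=\left((\pi,0,0),(\pi,\pi,\pi)\right)=\{(\pi,\alpha,\alpha):0\le\alpha\le\pi\}$; this is spherical by Theorem \ref{tbase3}, and since the first cone angle is $2\pi$ one is left with the family $(O,0\,|\,\pi/\alpha,\pi/\alpha)$ on $S^2$ with two cone points of equal angle $2\alpha$, $0<\alpha\le\pi$.

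I do not expect a serious obstacle: the geometric content is already carried by Theorem \ref{tbase3} (ultimately by formula (\ref{esalfas})), and the remaining work is the bookkeeping of intersecting a tetrahedron and three of its edges with an affine plane. The only points needing genuine care are checking that the sliced rectangle stays inside $[0,\pi]^2$, so that the constraints $0\le\alpha_i\le\pi$ are automatic, and correctly matching each edge and each distinguished vertex of the rectangle with the corresponding face or edge of $t_s$, together with the observation that a cone angle of $2\pi$ is a smooth point, which is what collapses the two good corners onto the orbifold $(O,0\,|\,a_1,a_1)$.
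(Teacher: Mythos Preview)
Your approach is exactly the paper's: slice the three-parameter picture of Theorem~\ref{tbase3} (the cube, the tetrahedra $t_h$, $t_s$, and the edges $l_i$) by the plane $\alpha_1=\pi/a_1$, and read off the resulting regions in the $(\alpha_2,\alpha_3)$-square. The paper's own proof says precisely this in two sentences and defers the bookkeeping to Figure~\ref{dosalfas}; you have simply carried out that bookkeeping explicitly---the substitution into $p_1,p_3,p_5,p_7$, the check that the rectangle stays in $[0,\pi]^2$, the identification of the two ``good'' corners with $l_2\cap\{\alpha_1=\pi/a_1\}$ and $l_3\cap\{\alpha_1=\pi/a_1\}$, and the $a_1=1$ case via $l_1$---all of which is correct.
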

\begin{proof}
The geometry on a triangle $T_{\pi /a_1,\alpha_2,\alpha_3)}$ with a fixed angle $\pi /a_1$ and two variable angles
 $(\alpha_2,\alpha_3)$ corresponds to the points in the intersection of the cube in $\mathbb{R}^3$ with the plane $\alpha_1=\pi /a_1$. Figure \ref{dosalfas}.

The  spherical cone manifold structure, $(O,0\, |\, \pi /\alpha,\pi /\alpha )$, is defined by points in the edges with vertices $\left( (\pi,0,0),(\pi ,\pi ,\pi )\right)$, $\left( (0,\pi,0),(\pi ,\pi ,\pi )\right)$ and $\left( (0,0,\pi),(\pi ,\pi ,\pi )\right)$ in the faces of the cube.
\end{proof}

The case with two fixed angles $\pi /a_1,\,\pi /a_2$ and one variable angle
 $\alpha_3$ is the following.

\begin{theorem}\label{tbase1}
For each pair of natural numbers $(a_1,a_2)$, $1<a_1\leq a_2$ there exists a continuous family of geometric cone manifold structures, \[
(O,0\, |\, a_1,a_2,\pi /\alpha_3 ),
\]
in the 2-sphere $S^{2}$, with three conic points with angles $(\frac{2\pi }{a_1},\frac{2\pi }{a_2}, 2\alpha_3 )$ such that the geometry is hyperbolic for  $0<\alpha_3 <\alpha_{L}$, Euclidean for  $\alpha_{L}$ and spherical for $\alpha_{L}< \alpha_3 <\alpha_{U}$, where
\[2\,\alpha _{L}= 2\,\frac{a_2a_1-a_2-a_1}{a_2a_1}\pi \quad \quad 2\,\alpha_{U}=2\,\frac{a_2a_1-a_2+a_1}{a_2a_1}\pi
\]
are, respectively, the lower and the upper limits of sphericity for the 2-conemanifold $(O,0\, |\, a_1,a_2,\pi /\alpha_3 )$. The amplitude of this spherical interval, $[2\,\alpha_{L} ,2\,\alpha_{U}]$, is inversely proportional to $a_2$: $2\,\alpha_{U} -2\,\alpha_{L} =\frac{4\pi}{a_2}$.

The case $a_1=1$ gives the  spherical orbifold structures, $(O,0\, |\, 2\pi /a_2,2\pi /a_2)$, in the 2-sphere $S^{2}$, with two conic points with equal angle $2\pi /a_2$.
\end{theorem}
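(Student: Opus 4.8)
The plan is to derive this statement as the one–parameter specialization of Theorem~\ref{tbase3} (equivalently Theorem~\ref{tbase2}): the family $(O,0\,|\,a_1,a_2,\pi/\alpha_3)$ is exactly the restriction of the family constructed there to the segment of the parameter cube $[0,\pi]^3$ cut out by the two hyperplanes $\alpha_1=\pi/a_1$ and $\alpha_2=\pi/a_2$. Consequently the existence of the family and its continuity in $\alpha_3$ come for free, and the work is to read off, from the region description in the Lemma preceding Theorem~\ref{tbase3}, which geometry the point $P(\alpha_3)=(\pi/a_1,\ \pi/a_2,\ \alpha_3)$ carries as $\alpha_3$ runs over $(0,\pi]$. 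I would begin by observing that, since $1<a_1\le a_2<\infty$, we have $0<\pi/a_1,\ \pi/a_2<\pi$, so the segment $\sigma=\{P(\alpha_3)\}$ avoids all four coordinate faces $\{\alpha_1=0\},\{\alpha_1=\pi\},\{\alpha_2=0\},\{\alpha_2=\pi\}$ of the cube and meets $\{\alpha_3=0\},\{\alpha_3=\pi\}$ only at its endpoints.

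Next I would handle the hyperbolic and Euclidean parts. The segment $\sigma$ crosses the Euclidean plane $p_1:\alpha_1+\alpha_2+\alpha_3=\pi$ precisely at $\alpha_3=\pi-\pi/a_1-\pi/a_2=\frac{a_1a_2-a_1-a_2}{a_1a_2}\pi=\alpha_L$. For $0<\alpha_3<\alpha_L$ the point $P(\alpha_3)$ has all coordinates positive and satisfies $\alpha_1+\alpha_2+\alpha_3<\pi$, and the only face of the tetrahedron $t_h$ it could meet in this range is $f_e\subset p_1$; hence it lies in the interior of $t_h$ and the geometry is hyperbolic by the Lemma. At $\alpha_3=\alpha_L$ the point lies in the interior of $f_e$ (here I would note that $\alpha_L>0$ for every admissible pair except $(a_1,a_2)=(2,2)$, where $\alpha_L=0$ and the hyperbolic and Euclidean ranges collapse), and $T_{(\pi/a_1,\pi/a_2,\alpha_L)}$ is a genuine Euclidean triangle, so the geometry is Euclidean.

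Then I would treat the spherical part, which is the heart of the argument. Recall that the interior of $t_s$ is cut out by $\alpha_1+\alpha_2+\alpha_3>\pi$ together with $\alpha_1-\alpha_2+\alpha_3<\pi$ (from $p_3$), $-\alpha_1+\alpha_2+\alpha_3<\pi$ (from $p_5$) and $\alpha_1+\alpha_2-\alpha_3<\pi$ (from $p_7$). Evaluating at $P(\alpha_3)$, these become $\alpha_3>\alpha_L$; $\alpha_3<\pi-\pi/a_1+\pi/a_2=\frac{a_1a_2-a_2+a_1}{a_1a_2}\pi=\alpha_U$; $\alpha_3<\pi+\pi/a_1-\pi/a_2$; and $\alpha_3>\pi/a_1+\pi/a_2-\pi=-\alpha_L$. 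The last two are automatic on $(0,\pi)$ because $a_1\le a_2$ makes $\pi/a_1-\pi/a_2\ge0$ and $\alpha_L\ge0$; so $P(\alpha_3)$ lies in the interior of $t_s$ exactly for $\alpha_L<\alpha_3<\alpha_U$, and — when $a_1<a_2$, so $\alpha_U<\pi$ — off the edges $l_1,l_2,l_3$. Hence the geometry is spherical on $(\alpha_L,\alpha_U)$, while at $\alpha_3=\alpha_U$ the point lands in the interior of the face $p_3$, an upper limit of sphericity beyond which $T$ is not a triangle and no structure exists. The amplitude claim is then the trivial identity $\alpha_U-\alpha_L=2\pi/a_2$. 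In the boundary case $a_1=a_2$ I would remark that $\alpha_U=\pi$ and that $P(\pi)$ lands on the edge $l_3$ of $t_s$, which is spherical and equals the two–cone–point orbifold $(O,0\,|\,a_1,a_1)$.

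Finally, for $a_1=1$ I would note that $\alpha_1=\pi$, so $\sigma$ lies in the face $\{\alpha_1=\pi\}$, where $T_{(\pi,\alpha_2,\alpha_3)}$ is degenerate; by the analysis around Figure~\ref{funopi} one has $S=1$ there and a spherical configuration forces $\alpha_2=\alpha_3$, so with $\alpha_2=\pi/a_2$ fixed the only admissible value is $\alpha_3=\pi/a_2$, giving the spherical two–cone–point orbifold with both cone angles $2\pi/a_2$. The main obstacle is the verification in the third step that along $\sigma$ only the faces $p_1$ and $p_3$ of $t_s$ are actually crossed (plus the edge $l_3$ in the boundary case $a_1=a_2$): this is precisely where the hypothesis $a_1\le a_2$ is needed, and it is what guarantees that $\alpha_L$ and $\alpha_U$ are the true lower and upper limits of sphericity and not the abscissae of crossings with $p_5$ or $p_7$. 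Everything else is a straightforward reading-off from Theorem~\ref{tbase3} and its preceding Lemma.
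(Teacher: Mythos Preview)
Your proof is correct and follows essentially the same approach as the paper: both arguments specialize Theorem~\ref{tbase3} to the segment $\alpha_1=\pi/a_1$, $\alpha_2=\pi/a_2$, read off $\alpha_L$ from $p_1$, and then check the three inequalities coming from $p_3,p_5,p_7$ to see that, under $a_1\le a_2$, only $p_3$ is binding and yields $\alpha_U$. Your treatment is in fact somewhat more careful than the paper's own proof, which does not explicitly discuss the degenerate pair $(2,2)$, the boundary case $a_1=a_2$ (where $\alpha_U=\pi$ and the segment hits the edge $l_3$), or the separate $a_1=1$ clause.
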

\begin{proof}
For each pair of natural numbers $(a_1,a_2)$, $1<a_1\leq a_2$ the region of sphericity is an interval. Since the Euclidean case comes from
\[
\frac{\pi }{a_1}+\frac{\pi }{a_2}+ \alpha_3 =\pi \quad \Leftrightarrow \quad \alpha_3 =(1-\frac{1 }{a_1}-\frac{1 }{a_2})\,\pi =\frac{a_2a_1-a_2-a_1}{a_2a_1}\, \pi \geq 0
\]
then $2\frac{a_2a_1-a_2-a_1}{a_2a_1}$ is the limit of hyperbolicity and also the lower limit of sphericity. The obtain the upper limit of sphericity observe that it is limited by the inequalities given by $p_3,\,p_5,\,p_7$:

\begin{equation}\label{limitesuno}
\begin{array}{ll}
\frac{\pi}{a_1}- \frac{\pi}{a_2}+\alpha_3<\pi &\quad \Leftrightarrow \quad \alpha_3 < \frac{a_1a_2-a_2+a_1}{a_1a_2}\,\pi \\
-\frac{\pi}{a_1}+ \frac{\pi}{a_2}+\alpha_3<\pi &\quad \Leftrightarrow \quad \alpha_3 <  \frac{a_1a_2+a_2-a_1}{a_1a_2}\,\pi \\
\frac{\pi}{a_1}+ \frac{\pi}{a_2}-\alpha_3<\pi &\quad \Leftrightarrow \quad \alpha_3 >  \frac{-a_1a_2+a_2+a_1}{a_1a_2}\, \pi \leq 0
\end{array}
\end{equation}
The third inequality in (\ref{limitesuno}) gives no restriction because $1<a_1\leq a_2$.
Since $a_1\leq a_2$, the first two inequalities in (\ref{limitesuno}) reduce to
\[
\alpha_3 <  \frac{a_1a_2-a_2+a_1}{a_1a_2}\, \pi.
\]
Hence $2\frac{a_1a_2-a_2+a_1}{a_1a_2}\, \pi$ is the  upper limit of sphericity.

\end{proof}

\section{The geometric structure on $M\in \mathcal{F}$ }\label{s3}

We consider $\mathbb{R}^3$ as the space of coordinates $(\beta_1,\beta_2,\beta_3)$, and we obtain the regions for the possible geometries in  the conemanifold structure in $M$ with cone angles  $\beta_i$ along the $(a_i,b_i)$-fibres, $i=1,2,3$. If the $(a_i,b_i)$-fibre does not belong to the singular set, the angle $\beta_i$ is $2\pi$.

In the formulation of the next theorems we write in brackets the case $e(M)= 0$.

\begin{figure}[h]
\begin{center}\epsfig{file=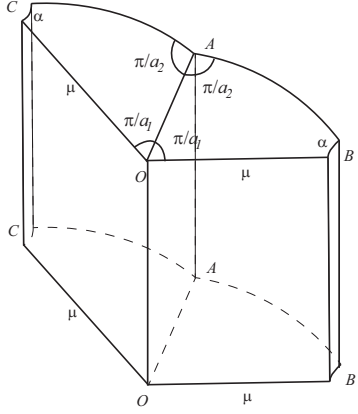,height=5.5cm}
\caption{The prism $D$}\label{fprisma}\end{center}
\end{figure}
\begin{figure}[h]
\begin{center}\epsfig{file=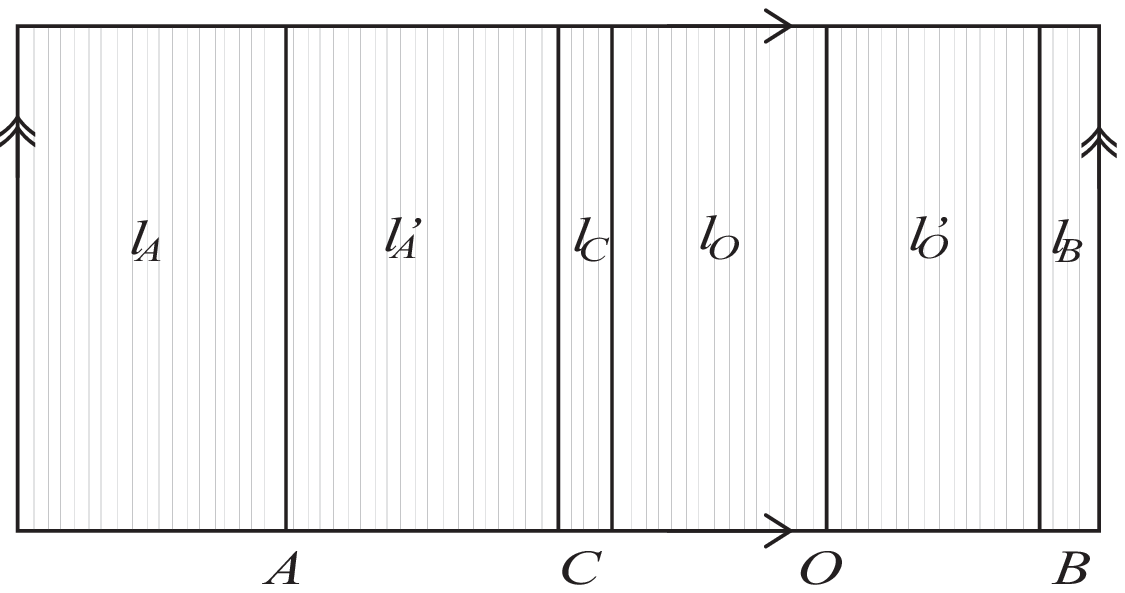,height=5cm}
\caption{The boundary of $\widehat{D}$}\label{ftoro}\end{center}
\end{figure}

\begin{theorem}\label{t3fibras}
Let $M$ be a Seifert manifold
\begin{eqnarray*}
M&=&\left\langle O,o,0\, |\, b;(a_{1},b_{1}),(a_{2},b_{2}),(a_{3},b_{3})\right\rangle = \\
&=& \left\langle O,o,0\, |\, 0;(a_{1},b_{1}),(a_{2},b_{2}),(a_{3},b_{3}+ba_3)\right\rangle
\end{eqnarray*}
such that either $(a_{i},b_{i})$ are coprime integers, with $0<b_{i}<a_{i}>1$, or $(a_{i},b_{i})=(1,0)$. If the Euler number $e(M)\neq 0$ (resp. $e(M)=0$), then $M$
has geometric conemanifold structures $\left( M,(\beta_1,\beta_2,\beta_3) \right)$ with the singularity $L$, if any, is along the $(a_i,b_i)$-fibres.  The geometry can be $\widetilde{SL(2,\mathbb{R}})$, Nil or spherical (resp. $H^2\times \mathbb{R}$, Euclidean or $S^2\times \mathbb{R}$).

The geometry is $\widetilde{SL(2,\mathbb{R}})$ (resp. $H^2\times \mathbb{R}$) for
\[
0\leq \beta_1a_2a_3+\beta_2a_1a_3+\beta_3a_1a_2<2a_1a_2a_3 \pi
\]

The geometry is Nil (resp. Euclidean) for
\[
\beta_1a_2a_3+\beta_2a_1a_3+\beta_3a_1a_2=2a_1a_2a_3 \pi ,\quad \beta_i>0
\]

The geometry is spherical (resp. $S^2\times \mathbb{R}$) for points in the interior of the tetrahedron $t_s$ in $\mathbb{R}^3$
\[
t_s= \left( (2a_1\pi ,0,0),(0,2a_2\pi ,0),(0,0,2a_3\pi ),(2a_1\pi ,2a_2\pi ,2a_3\pi )\right)
 \]
 and for points in the edges
 \[
 \begin{array}{l}
 l_1 = \left( (2a_1\pi ,0,0),(2a_1\pi ,2a_2\pi ,2a_3\pi )\right)\\
 l_2 = \left( (0,2a_2\pi ,0),(2a_1\pi ,2a_2\pi ,2a_3\pi )\right)\\
 l_3 = \left( (0,0,2a_3\pi ),(2a_1\pi ,2a_2\pi ,2a_3\pi )\right).
 \end{array}
 \]
\end{theorem}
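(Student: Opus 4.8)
The plan is to reduce Theorem \ref{t3fibras} to the two-dimensional picture already established in Theorem \ref{tbase3} via the standard fibration dictionary for geometric Seifert conemanifolds. First I would recall that a Seifert conemanifold structure on $M\in\mathcal F$ with cone angle $\beta_i$ along the $(a_i,b_i)$-fibre is \emph{compatible} with the Seifert fibration (this is asserted in Section \ref{s1}): the fibres are geodesics, and the orbit space $\mathbf B=S^2$ carries an induced $2$-conemanifold structure whose cone point over the $(a_i,b_i)$-fibre has cone angle $2\alpha_i$, where by definition $\beta_i=2\alpha_i a_i$, i.e. $\alpha_i=\beta_i/(2a_i)$. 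Thus the linear change of coordinates $(\beta_1,\beta_2,\beta_3)\mapsto(\alpha_1,\alpha_2,\alpha_3)=(\beta_1/2a_1,\beta_2/2a_2,\beta_3/2a_3)$ carries $\mathbb R^3_{(\beta)}$ isomorphically onto $\mathbb R^3_{(\alpha)}$, sending the cube $[0,2a_1\pi]\times[0,2a_2\pi]\times[0,2a_3\pi]$ onto $[0,\pi]^3$ and, crucially, sending the vertices $(2a_1\pi,0,0),(0,2a_2\pi,0),(0,0,2a_3\pi),(2a_1\pi,2a_2\pi,2a_3\pi)$ to $(\pi,0,0),(0,\pi,0),(0,0,\pi),(\pi,\pi,\pi)$ and the three $\beta$-edges $l_1,l_2,l_3$ to the three $\alpha$-edges of the same names in Theorem \ref{tbase3}. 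Affine maps preserve incidence of faces, edges and interiors of tetrahedra, so the regions described in the statement are exactly the images under this change of coordinates of the regions in Theorem \ref{tbase3}.

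Next I would invoke the base--fibre correspondence for the \emph{type} of geometry. When $e(M)\neq0$ the total space geometry is determined by the base orbifold geometry according to the table in Section \ref{s1}: spherical base $\leadsto S^3$, Euclidean base $\leadsto$ Nil, hyperbolic base $\leadsto\widetilde{SL(2,\mathbb R)}$; and when $e(M)=0$ the correspondence is spherical $\leadsto S^2\times\mathbb R$, Euclidean $\leadsto E^3$, hyperbolic $\leadsto H^2\times\mathbb R$. The same dictionary holds verbatim for conemanifold structures, since the cone singularities lie along fibres and the fibration-compatible metric is a (possibly twisted, according to $e$) product of the base cone metric with the $\mathbb R$- or $S^1$-fibre direction; the Euler number $e(M)$ (which is unaffected by the choice of cone angles, being a topological invariant of the Seifert structure) decides whether the fibration is the trivial bundle type ($e=0$) or a nontrivial bundle/twisted type ($e\neq0$). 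Granting this, the geometry of $\bigl(M,(\beta_1,\beta_2,\beta_3)\bigr)$ is $\widetilde{SL(2,\mathbb R)}$ / Nil / spherical (resp. $H^2\times\mathbb R$ / $E^3$ / $S^2\times\mathbb R$) precisely when the induced base $2$-conemanifold $(O,0\,|\,\pi/\alpha_1,\pi/\alpha_2,\pi/\alpha_3)$ is hyperbolic / Euclidean / spherical.

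It remains to translate the inequalities. By Theorem \ref{tbase3} the base is hyperbolic for $0\le\alpha_1+\alpha_2+\alpha_3<\pi$; multiplying through by $2a_1a_2a_3$ and using $\alpha_i=\beta_i/2a_i$ gives $0\le \beta_1a_2a_3+\beta_2a_1a_3+\beta_3a_1a_2<2a_1a_2a_3\pi$, which is the stated $\widetilde{SL(2,\mathbb R)}$ (resp. $H^2\times\mathbb R$) region. The Euclidean locus $\alpha_1+\alpha_2+\alpha_3=\pi$, $\alpha_i>0$, becomes $\beta_1a_2a_3+\beta_2a_1a_3+\beta_3a_1a_2=2a_1a_2a_3\pi$, $\beta_i>0$, the Nil (resp. $E^3$) region. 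Finally the spherical region of Theorem \ref{tbase3} — the interior of the tetrahedron with vertices $(\pi,0,0),(0,\pi,0),(0,0,\pi),(\pi,\pi,\pi)$ together with the three edges $l_1,l_2,l_3$ — maps under the affine isomorphism onto the interior of $t_s$ together with the edges $l_1,l_2,l_3$ as displayed, giving the spherical (resp. $S^2\times\mathbb R$) region. The faces of $t_s$ are then the limits of sphericity, inherited from the corresponding faces $f_e,f_1,f_2,f_3$ in the base.

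The main obstacle is not the computation — which is the routine affine rescaling just sketched — but justifying the fibration dictionary at the level of \emph{conemanifolds} rather than orbifolds: one must check that a geometric cone structure on the base $S^2$ with three cone points genuinely lifts to a fibration-compatible geometric cone structure on $M$ realizing the stated cone angles $\beta_i$ along the fibres, with geometry type governed by the sign of $e(M)$ as in the closed case. I would handle this by the explicit model construction — writing the candidate metric on the Seifert-fibred complement of the singular fibres as the appropriate $e$-twisted bundle metric over the smooth part of the base cone surface, and then checking near each singular fibre that the metric extends to a genuine cone-type singularity of the correct angle $\beta_i=2\alpha_i a_i$ — exactly as carried out in the trefoil case in \cite{LM2015}; the holonomy claims alluded to in the introduction come out of the same construction.
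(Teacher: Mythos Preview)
Your proposal is correct and follows essentially the same approach as the paper: reduce to Theorem~\ref{tbase3} via the relation $\beta_i=2a_i\alpha_i$, then read off the regions by the affine rescaling $(\beta_1,\beta_2,\beta_3)\mapsto(\alpha_1,\alpha_2,\alpha_3)$. The only difference in emphasis is that where you defer the existence of the fibration-compatible cone metric to the model construction of \cite{LM2015}, the paper actually sketches it --- for $e(M)\neq0$ by building a fibred prism over the doubled base triangle, identifying top and bottom, gluing the lateral annuli by isometries $g(a_i,b_i)$ to create the exceptional fibres, and filling in the last solid torus; for $e(M)=0$ by starting from $S^2\times S^1$ with the base cone metric and performing fibrewise surgery --- so you may wish to include at least this outline rather than treating the lifting step as a black box.
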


\begin{proof}
Suppose $e(M)\neq 0$. By a similar construction as the one made for the trefoil knot in \cite{LM2015}, one can define geometric cone manifold structures in the Seifert manifold $M$,
such that the 2-conemanifold structure in the orbit space is
\[
(O,0\, |\, \pi /\alpha_1,\pi /\alpha_2,\pi /\alpha_3 )
 \]
 and the singular set $L$ in $M$ is the set of exceptional fibres, where the angle  around the $(a_i,b_i)$-fibre is
\begin{equation}\label{ebetaalfa}
\beta_i = a_i 2\alpha.
\end{equation}

Let $\mathbf{B}$ be the union of the two  triangles depicted in Figure \ref{fpqadoblealfas} minus a neighborhood of the vertices $C$ and $B$ with angle $\alpha$, provided with the suitable geometry (hyperbolic, Euclidean or spherical). Consider a fibred prism $D$ with orbit space $\mathbf{B}$ in the corresponding geometry ($\widetilde{SL(2,\mathbb{R})}$, Nil or $S^3$ geometry), Figure \ref{fprisma}. Identify botton and top by the isometric translation along the fibres by $2\pi$. The boundary of the resulting solid torus $\widehat{D}$ is divided into six fibred annuli, Figure \ref{ftoro}. Two of them, say $l_A$ and $l'_A$, intersect  each other along the fibre over the vertex $A$: $A\times S^{1}$. The fibres $l_O$ and $l'_O$,  intersect each other along the fibre over the vertex $O$: $O\times S^{1}$. The remaining two, say $l_C$ and $l_B$, are the product of the boundary of the neighborhood of vertices $C$ and $B$ cross $S^1$. Identify $l_O$ and $l'_O$ by an isometry, $g(a_1,b_1)$, which projects into the orbit space upon a rotation of angle $2\alpha_i$ around de point $O$, leaving invariant the fibre $O\times S^{1}$ in such a way that the fibration induced on the quotient has the invariant fibre as  an exceptional fibre of type $(a_1,b_1)$. The result is a solid torus with a geometric Seifert structure with an exceptional fibre of type $(a_1,b_1)$.  Next, identify analogously $l_A$ and $l'_A$ by an isometry, $g(a_2,b_2)$, in such a way that the fibration induced on the quotient has the invariant fibre $A\times S^{1}$ as  an exceptional fibre if type $(a_2,b_2)$. We have obtained a geometric manifold $N$, with boundary  the fibred torus $l_C \cup l_B$. Attach a solid fibred torus $T_{(a_{3},b_{3}+ba_3)}$, whose core is a exceptional fibre of type $(a_{3},b_{3}+ba_3)$. The meridian of $T_{(a_{3},b_{3}+ba_3)}$ is glued to a curve made up of  two ``horizontal'' segments. Extend the geometry in $N$ to the attached $T_{(a_{3},b_{3}+ba_3)}$. The result is our Seifert manifold $M$ with geometry. The exceptional fibre $(a_{3},b_{3}+ba_3)$ projects into the orbit space to the point $C\equiv B$ with an angle $2\alpha_3$. Then the angle around the $(a_i,b_i)$-fibre  is $\beta_i =2\alpha_i a_i$, $i=1,2,3$.

If $e(M)=0$, consider the product $S^2\times S^1$. Endow the factor $S^2$ with the geometric conemanifold structure $(O,0\, |\, \pi /\alpha_1, \pi /\alpha_2,\pi /\alpha_3)$, given in Theorem \ref{tbase3}. Let $A_i$ be the cone point with angle $\alpha_i$.
Next, make surgery along each fibre $A_i\times S^1$ such that it becomes a exceptional fibre of type $(a_i,b_i)$ for $i=1,2$ and $(a_{3},b_{3}+ba_3)$ for $i=3$.
The resulting manifold is our Seifert manifold $M$ with the corresponding product geometry.

Therefore, in both cases, the issue is a geometric Seifert conemanifold with singularity, if any, along the $(a_i,b_i)$-fibres where the angle is
\[
\beta_i=2a_i\alpha_i .
\]
This relation between $\alpha_i$ and $\beta_i$ and Theorem \ref{tbase3} gives the values of the angles $\beta_i$ for every possible geometry.

Consider the 3-dimensional space with coordinates $(\beta_1,\beta_2,\beta_3)$. The points in this space defining the  values for $\beta_i$ thus obtained belong to the cube $[0, 2a_1\pi]\times  [0,2a_2\pi]\times [0,2a_3\pi]$. The regions for the different geometries are obtained by appropriated scaling of the regions in Figure \ref{fregionesalfa}.
\end{proof}

\begin{theorem}\label{t2fibras}
Let $M$ be a Seifert manifold
\begin{eqnarray*}
M&=&\left\langle O,o,0\, |\, b;(a_{1},b_{1}),(a_{2},b_{2}),(a_{3},b_{3})\right\rangle = \\
&=& \left\langle O,o,0\, |\, 0;(a_{1},b_{1}),(a_{2},b_{2}),(a_{3},b_{3}+ba_3)\right\rangle
\end{eqnarray*}
such that either $(a_{i},b_{i})$ are coprime integers, with $0<b_{i}<a_{i}>1$, or $(a_{i},b_{i})=(1,0)$. If the Euler number $e(M)\neq 0$ (resp. $e(M)=0$), then $M$
has geometric conemanifold structures $\left( M,(\beta_2,\beta_3) \right)$ with the singularity $L$, if any, is along the $(a_i,b_i)$-fibres, $i=2,3$.  The geometry can be $\widetilde{SL(2,\mathbb{R}})$, Nil or spherical (resp. $H^2\times \mathbb{R}$, Euclidean or $S^2\times \mathbb{R}$).

The geometry is $\widetilde{SL(2,\mathbb{R}})$ (resp. $H^2\times \mathbb{R}$) for
\[
0\leq \beta_2a_1a_3+\beta_3a_1a_2<2(a_1-1)a_2a_3\pi
\]

The geometry is Nil (resp. Euclidean) for
\[
\beta_2a_1a_3+\beta_3a_1a_2=2(a_1-1)a_2a_3 \pi>0
\]

The geometry is spherical (resp. $S^2\times \mathbb{R}$) for points in the interior of the rectangle in $\mathbb{R}^2$ with coordinates $(\beta_2,\beta_3)$, where
\[
\left\{\begin{array}{c}
2(a_1-1)a_2a_3 \pi <\beta_2a_1a_3+\beta_3a_1a_2<2(a_1+1)a_2a_3 \pi \, \text{and} \\
2(1-a_1)a_2a_3 \pi <\beta_2a_1a_3-\beta_3a_1a_2<2(a_1-1)a_2a_3 \pi
\end{array}\right. ,
\]

and also for $(\beta_2 , \beta_3 )=( 2\pi ,\frac{2\pi a_3}{a_1})$ or $(\beta_2 , \beta_3 )=( \frac{2\pi a_2}{a_1} ,2\pi )$.

The geometry is also spherical ($S^2\times \mathbb{R}$) for the three families of conemanifold structures in $M$ with cone angles $(2\alpha a_1,2\alpha a_2,2\pi a_3)$,  $(2\alpha a_1,2\pi a_2,2\alpha a_3)$ and $(2\pi a_1,2\alpha a_2,2\alpha a_3)$, $0<\alpha \leq \pi$, along the fibres $(a_1,a_2,a_3)$.
\end{theorem}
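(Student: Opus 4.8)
The plan is to run, with one parameter frozen, the construction used for Theorem~\ref{t3fibras}. Requiring the $(a_1,b_1)$-fibre to be non-singular means $\beta_1=2\pi$, which in the normalisation $\beta_i=2a_i\alpha_i$ forces $\alpha_1=\pi/a_1$; so the induced $2$-conemanifold on the orbit space $\mathbf B$ is exactly one of the structures $(O,0\,|\,a_1,\pi/\alpha_2,\pi/\alpha_3)$ classified in Theorem~\ref{tbase2}. First I would repeat the two constructions in the proof of Theorem~\ref{t3fibras}, with $\alpha_1$ held at $\pi/a_1$: when $e(M)\neq 0$, take a fibred prism over $\mathbf B=(O,0\,|\,a_1,\pi/\alpha_2,\pi/\alpha_3)$ carrying the corresponding $\widetilde{SL(2,\mathbb R)}$, Nil or $S^3$ geometry, glue top to bottom by the length-$2\pi$ translation along the fibres, install the exceptional fibres $(a_1,b_1)$ and $(a_2,b_2)$ by the isometric side identifications $g(a_1,b_1)$, $g(a_2,b_2)$ (the first of pure orbifold type) and $(a_3,b_3+ba_3)$ by the Dehn filling $T_{(a_3,b_3+ba_3)}$; when $e(M)=0$, start from $S^2\times S^1$ with the $S^2$-factor carrying the structure of Theorem~\ref{tbase2} and surger along $A_1\times S^1$, $A_2\times S^1$, $A_3\times S^1$. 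Either way one gets a geometric Seifert conemanifold structure on $M$, compatible with the fibration, with singularity (if any) along the $(a_2,b_2)$- and $(a_3,b_3)$-fibres, with $\beta_1=2\pi$, with $\beta_i=2a_i\alpha_i$ for $i=2,3$, and with the geometry $\widetilde{SL(2,\mathbb R)}$/Nil/spherical (resp.\ $H^2\times\mathbb R$/Euclidean/$S^2\times\mathbb R$) inherited from $\mathbf B$.

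The three ``generic'' assertions then follow by transporting the trichotomy of Theorem~\ref{tbase2} from the $(\alpha_2,\alpha_3)$-square to the $(\beta_2,\beta_3)$-plane. There $\mathbf B$ is hyperbolic, Euclidean or spherical according as $(\alpha_2,\alpha_3)$ lies below, on, or above the line $\alpha_2+\alpha_3=\frac{a_1-1}{a_1}\pi$, with the spherical part cut down to the rectangle $\frac{a_1-1}{a_1}\pi<\alpha_2+\alpha_3<\frac{a_1+1}{a_1}\pi$, $\frac{1-a_1}{a_1}\pi<\alpha_2-\alpha_3<\frac{a_1-1}{a_1}\pi$. Substituting $\alpha_i=\beta_i/(2a_i)$ and multiplying through by $a_1a_2a_3$ turns $\alpha_2+\alpha_3=\frac{a_1-1}{a_1}\pi$ into $\beta_2a_1a_3+\beta_3a_1a_2=2(a_1-1)a_2a_3\pi$, whose right side is positive since $a_1>1$, so along it $\beta_2,\beta_3>0$; the same substitution carries the two rectangle inequalities onto the displayed system for the $S^3$ (resp.\ $S^2\times\mathbb R$) region, and the corresponding strict inequality $<$ gives $\widetilde{SL(2,\mathbb R)}$ (resp.\ $H^2\times\mathbb R$). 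This is just the rescaling of the picture accompanying Theorem~\ref{tbase2} and is routine.

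For the remaining spherical loci I would argue as follows. The two distinguished points are the images of the corners $(\pi,\pi/a_1)$ and $(\pi/a_1,\pi)$ of the $(\alpha_2,\alpha_3)$-rectangle, where by Theorem~\ref{tbase2} $\mathbf B$ is the honest spherical orbifold $(O,0\,|\,a_1,a_1)$: one of the two deforming base points becomes smooth, so the fibre over it is a regular fibre carrying cone angle $2\pi$, while the other keeps order $a_1$; lifting through the fibration yields the conemanifold structures with $(\beta_2,\beta_3)=(2\pi,2\pi a_3/a_1)$ and $(2\pi a_2/a_1,2\pi)$. The three one-parameter families with cone angles $(2\alpha a_1,2\alpha a_2,2\pi a_3)$, $(2\alpha a_1,2\pi a_2,2\alpha a_3)$, $(2\pi a_1,2\alpha a_2,2\alpha a_3)$, on the other hand, are precisely the edges $l_1,l_2,l_3$ of the tetrahedron $t_s$ of Theorem~\ref{t3fibras}: on each of them two of the $\alpha_i$ equal a common value $\alpha$ and the third equals $\pi$, so $\mathbf B$ is the two-conepoint football $(O,0\,|\,\pi/\alpha,\pi/\alpha)$, which is spherical for \emph{every} $0<\alpha\le\pi$; hence by Theorem~\ref{t3fibras} these structures on $M$ are spherical (resp.\ $S^2\times\mathbb R$) throughout, with no new argument needed.

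The step I expect to be the real obstacle is not the arithmetic but the bookkeeping of the degenerate cases. When one of the base cone angles reaches $\pi$ the Seifert picture jumps --- an exceptional fibre of the original fibration merges into a regular fibre, or the orbit space loses a cone point --- so the relation $\beta_i=2a_i\alpha_i$ must be re-read at those boundary points (this is why the rectangle vertices carry $\beta=2\pi$ in place of $2\pi a_i$), and one has to check that the construction there still produces $M$ itself, with the asserted geometry, rather than a neighbouring Seifert manifold. Verifying this at the four corners of the rectangle and along $l_1,l_2,l_3$, together with confirming --- exactly as in \cite{LM2015} and in the proof of Theorem~\ref{t3fibras} --- that the geometry built on the prism extends across the Dehn filling $T_{(a_3,b_3+ba_3)}$, is where the actual content of the proof resides.
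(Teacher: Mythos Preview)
Your proposal is correct and follows essentially the same approach as the paper: the paper's proof is a single sentence stating that one repeats the construction of Theorem~\ref{t3fibras} but endows the base $S^2$ with the structure $(O,0\,|\,a_1,\pi/\alpha_2,\pi/\alpha_3)$ of Theorem~\ref{tbase2}, which is exactly your strategy of freezing $\alpha_1=\pi/a_1$ and transporting the trichotomy. Your write-up is considerably more explicit than the paper's --- you spell out the rescaling arithmetic and you flag the degenerate-corner and edge-family bookkeeping as the place where care is needed --- whereas the paper simply defers all of this to ``analogous to the above''.
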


\begin{proof}
The proof is analogous to the proof of the above Theorem \ref{t3fibras} but endowing the factor $S^2$ with the geometric conemanifold structure $(O,0\, |\, a_1, \pi /\alpha_2,\pi /\alpha_3)$ given in Theorem \ref{tbase2}.
\end{proof}

\begin{theorem}\label{t1fibra}
Let $M$ be a Seifert manifold
\begin{eqnarray*}
M&=&\left\langle O,o,0\, |\, b;(a_{1},b_{1}),(a_{2},b_{2}),(a_{3},b_{3})\right\rangle = \\
&=& \left\langle O,o,0\, |\, 0;(a_{1},b_{1}),(a_{2},b_{2}),(a_{3},b_{3}+ba_3)\right\rangle
\end{eqnarray*}
such that either $(a_{i},b_{i})$ are coprime integers, with $0<b_{i}<a_{i}>1$, or $(a_{i},b_{i})=(1,0)$. If the Euler number $e(M)\neq 0$ (resp. $e(M)=0$), then $M$
has geometric conemanifold structures $\left( M,(\beta_3) \right)$ with the singularity $L$, if any, is the $(a_3,b_3)$-fibre.  The geometry can be $\widetilde{SL(2,\mathbb{R}})$, Nil or spherical (resp. $H^2\times \mathbb{R}$, Euclidean or $S^2\times \mathbb{R}$).

The geometry is $\widetilde{SL(2,\mathbb{R}})$ (resp. $H^2\times \mathbb{R}$) for
\[
0\leq \beta_3< 2\frac{a_3(a_2a_1-a_2-a_1)}{a_2a_1}\pi=\beta_L
\]

The geometry is Nil (resp. Euclidean) for
\[
\beta_3=\beta_L=2\frac{a_3(a_2a_1-a_2-a_1)}{a_2a_1}\pi
\]

The geometry is spherical (resp. $S^2\times \mathbb{R}$) for
\[
\beta_L=2\frac{a_3(a_2a_1-a_2-a_1)}{a_2a_1}\pi <\beta_3 <2\frac{a_3(a_2a_1-a_2+a_1)}{a_2a_1}\pi =\beta_U.
\]
The values $\beta_L$ and $\beta_U$ are the lower and upper limits of sphericity.
\end{theorem}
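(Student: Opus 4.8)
The plan is to obtain Theorem~\ref{t1fibra} as a specialization of the construction already used for Theorems~\ref{t3fibras} and~\ref{t2fibras}, now in the case where only the $(a_3,b_3)$-fibre carries a nontrivial cone angle. Since the non-singular fibres get $\beta_1=\beta_2=2\pi$ and the fundamental relation (\ref{ebetaalfa}) reads $\beta_i=2a_i\alpha_i$, this forces $\alpha_1=\pi/a_1$ and $\alpha_2=\pi/a_2$, so the $2$-conemanifold induced on the orbit space $\mathbf{B}$ is exactly the one-parameter family $(O,0\,|\,a_1,a_2,\pi/\alpha_3)$ analyzed in Theorem~\ref{tbase1}.

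First I would replay the fibred construction from the proof of Theorem~\ref{t3fibras}. If $e(M)\neq0$: take the doubled triangle $\mathbf{B}$ of Figure~\ref{fpqadoblealfas} with the appropriate $2$-dimensional geometry, build the fibred prism $D$ over it in the corresponding $3$-dimensional geometry ($\widetilde{SL(2,\mathbb{R})}$, Nil or $S^3$), glue top to bottom by the unit translation, and perform the two fibrewise identifications $g(a_1,b_1)$, $g(a_2,b_2)$ over the vertices $O$ and $A$. Because $\alpha_1=\pi/a_1$ and $\alpha_2=\pi/a_2$ are submultiples of $\pi$, these produce \emph{honest} Seifert exceptional fibres of types $(a_1,b_1)$, $(a_2,b_2)$, carrying cone angle $2\pi$ and hence no singularity. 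Attaching the fibred solid torus $T_{(a_3,b_3+ba_3)}$ and extending the geometry over it yields $M$ with a geometric Seifert conemanifold structure whose singular set is the single fibre $(a_3,b_3)$, along which the cone angle is $\beta_3=2a_3\alpha_3$ (genuinely singular iff $\alpha_3\neq\pi/a_3$). If $e(M)=0$ one instead endows the $S^2$ factor of $S^2\times S^1$ with the structure $(O,0\,|\,a_1,a_2,\pi/\alpha_3)$ of Theorem~\ref{tbase1} and does surgery along the three fibres $A_i\times S^1$ to install the exceptional fibres; the product geometry is $H^2\times\mathbb{R}$, $E^3$ or $S^2\times\mathbb{R}$ according to the geometry of the base, by the table of Section~\ref{s1}.

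It then remains to transport the trichotomy of Theorem~\ref{tbase1} through the affine substitution $\alpha_3=\beta_3/(2a_3)$: hyperbolic base geometry for $0<\alpha_3<\alpha_L$, Euclidean at $\alpha_3=\alpha_L$, spherical for $\alpha_L<\alpha_3<\alpha_U$, with $\alpha_L=\frac{a_2a_1-a_2-a_1}{a_2a_1}\pi$ and $\alpha_U=\frac{a_2a_1-a_2+a_1}{a_2a_1}\pi$. Multiplying by $2a_3$ gives exactly the stated $\beta_L$ and $\beta_U$; adjoining the endpoint $\beta_3=0$ (no singular fibre, base orbifold $(O,0\,|\,a_1,a_2)$) extends the hyperbolic/$H^2\times\mathbb{R}$ range to $0\le\beta_3<\beta_L$. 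Combining with the $e\neq0$ versus $e=0$ dichotomy and the table of Section~\ref{s1} identifies the $3$-dimensional geometry on $M$ in each range as asserted. There is no genuine obstacle here beyond the one step that always needs checking in these arguments, namely that the chosen $3$-dimensional metric on the complement extends across the attached solid torus $T_{(a_3,b_3+ba_3)}$ (resp.\ across the surgery solid torus when $e=0$) with the prescribed cone angle along its core; this goes exactly as in \cite{LM2015} and in the proof of Theorem~\ref{t3fibras}, the only new feature being that the singular locus is a single fibre.
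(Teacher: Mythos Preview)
Your proposal is correct and follows exactly the paper's approach: the paper's proof is the single sentence that the argument is analogous to that of Theorem~\ref{t3fibras} but with the base $S^2$ endowed with the structure $(O,0\,|\,a_1,a_2,\pi/\alpha_3)$ from Theorem~\ref{tbase1}, and you have spelled this out in full. One small slip: in your parenthetical about the endpoint $\beta_3=0$, this does \emph{not} mean ``no singular fibre'' with base $(O,0\,|\,a_1,a_2)$; rather $\beta_3=0$ means cone angle zero along the $(a_3,b_3)$-fibre (a cusp, $\alpha_3=0$, ideal vertex in the base triangle), while the non-singular case is $\beta_3=2\pi$.
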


\begin{proof}
The proof is analogous to the proof of the above Theorem \ref{t3fibras} but endowing the base $S^2$ with the geometric conemanifold structure $(O,0\, |\, a_1, a_2,\pi /\alpha_3)$ given in Theorem \ref{tbase1}.
\end{proof}

\begin{corollary}
The cone angle around the singular set $L$ in the Nil cone manifold structure in the Seifert manifold
$(M,\beta_3)$, is always a rational multiple of $\pi$.
\end{corollary}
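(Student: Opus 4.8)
The plan is to read the result straight off Theorem \ref{t1fibra}. In the one-singular-fibre situation treated there, the cone angle $\beta_3$ around $L$ is the only free parameter, and among the three possible geometries the Nil one is forced (Nil requires $e(M)\neq 0$, but in any case the value of the relevant angle is the same). So the first step is simply to recall from the statement of Theorem \ref{t1fibra} that the Nil cone manifold structure on $(M,\beta_3)$ occurs for exactly one value of the cone angle, namely
\[
\beta_3=\beta_L=2\,\frac{a_3(a_1a_2-a_1-a_2)}{a_1a_2}\,\pi .
\]

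The second and final step is to observe that, by hypothesis, $a_1,a_2,a_3$ are natural numbers (with $(a_i,b_i)$ coprime and $a_i\ge 1$), so the scalar $\dfrac{2a_3(a_1a_2-a_1-a_2)}{a_1a_2}$ is a rational number; hence $\beta_L$ is a rational multiple of $\pi$, which is precisely the assertion. There is no genuine obstacle here: the corollary is immediate once the explicit formula for $\beta_L$ from Theorem \ref{t1fibra} is in hand, and no further computation is needed.

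The one point I would flag, and would add a remark about, is why this rationality is special to the case of a single variable cone angle. In Theorems \ref{t3fibras} and \ref{t2fibras}, where two or three fibres carry variable angles, the Nil (resp.\ Euclidean) locus is the hyperplane cut out by a single linear equation with integer coefficients in the $\beta_i$, e.g.\ $\beta_1a_2a_3+\beta_2a_1a_3+\beta_3a_1a_2=2a_1a_2a_3\pi$; this hyperplane carries a whole family of Nil structures, and a generic point of it has coordinates that need not be rational multiples of $\pi$. It is exactly because fixing all but one of the angles (to $2\pi$ on the non-singular fibres) intersects this hyperplane in the single point $\beta_3=\beta_L$ with rational data that rationality of the cone angle is forced in the one-fibre case.
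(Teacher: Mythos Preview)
Your argument is correct and is essentially identical to the paper's own proof: both simply quote the formula $\beta_L=2\,\dfrac{a_3(a_1a_2-a_1-a_2)}{a_1a_2}\,\pi$ from Theorem~\ref{t1fibra} and note that the coefficient is rational. The extra paragraph explaining why rationality is special to the one-fibre case is a nice clarifying remark but goes beyond what the paper records.
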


\begin{proof}
The value of the cone angle is $\beta _{L}= 2\frac{a_3(a_2a_1-a_2-a_1)}{a_2a_1}\pi $
\end{proof}

\begin{corollary}
The quotient $\beta_{U}/\beta_{L}$ between the upper and lower sphericity limits for the conemanifold structures $(M,\beta_3)$ in the manifold $M$ does not depend on the third fibre $a_{3}$.
\end{corollary}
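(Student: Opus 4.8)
The plan is to simply invoke the explicit formulas for $\beta_{L}$ and $\beta_{U}$ furnished by Theorem~\ref{t1fibra} and compute the ratio directly. Recall from that theorem that
\[
\beta_{L}=2\,\frac{a_{3}(a_{2}a_{1}-a_{2}-a_{1})}{a_{2}a_{1}}\,\pi,
\qquad
\beta_{U}=2\,\frac{a_{3}(a_{2}a_{1}-a_{2}+a_{1})}{a_{2}a_{1}}\,\pi,
\]
so that both limits carry the common factor $2a_{3}\pi/(a_{2}a_{1})$.

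The only step is to form the quotient and cancel this common factor:
\[
\frac{\beta_{U}}{\beta_{L}}
=\frac{2\,\dfrac{a_{3}(a_{2}a_{1}-a_{2}+a_{1})}{a_{2}a_{1}}\,\pi}
      {2\,\dfrac{a_{3}(a_{2}a_{1}-a_{2}-a_{1})}{a_{2}a_{1}}\,\pi}
=\frac{a_{2}a_{1}-a_{2}+a_{1}}{a_{2}a_{1}-a_{2}-a_{1}},
\]
which is an expression in $a_{1}$ and $a_{2}$ alone; the parameter $a_{3}$ has disappeared. (Note that the denominator $a_{2}a_{1}-a_{2}-a_{1}$ is nonzero, indeed positive, precisely because $e(M)\neq 0$ and the base orbifold is hyperbolic, i.e.\ $1<a_{1}\leq a_{2}$ with $(a_{1},a_{2})\neq(2,2),(2,3)$ forces $a_{1}a_{2}>a_{1}+a_{2}$; in the remaining finite list of exceptions the Euclidean/Nil case $\beta_{L}=0$ occurs and the ratio is not defined, consistent with the hypothesis that we are in the genuinely spherical regime.) There is no real obstacle here: the statement is an immediate algebraic consequence of Theorem~\ref{t1fibra}, the content being only the observation that $a_{3}$ enters $\beta_{L}$ and $\beta_{U}$ as an overall multiplicative constant and hence drops out of their quotient.
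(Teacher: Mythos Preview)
Your proof is correct and is essentially identical to the paper's own argument: both simply quote the explicit formulas for $\beta_L$ and $\beta_U$ from Theorem~\ref{t1fibra} and cancel the common factor $2a_3\pi/(a_1a_2)$ to obtain $\beta_U/\beta_L=(a_1a_2-a_2+a_1)/(a_1a_2-a_2-a_1)$. Your parenthetical aside, however, is slightly off: the nonvanishing of $a_1a_2-a_1-a_2$ has nothing to do with $e(M)\neq 0$ or with hyperbolicity of the base, and $(a_1,a_2)=(2,3)$ is not an exception (only $(2,2)$ gives $\beta_L=0$); the paper simply does not comment on this degenerate case.
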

\begin{proof}
It follows from Theorem \ref{t1fibra} that
\[
\frac{\beta_{U}}{\beta_{L}}= \frac{a_2a_1-a_2+a_1}{a_2a_1-a_2-a_1}
\]
\end{proof}
Observe that the relative position of $2\pi$ respect to $\beta_{L}$ and $\beta_{U}$ define the geometry of the Seifert manifold $M$, where the fibres are geodesics:

$\widetilde{SL(2,\mathbb{R})}$ (resp. $H^2\times \mathbb{R}$) geometry $\Leftrightarrow$ $2\pi < \beta_L$

Nil (resp. Euclidean) geometry $\Leftrightarrow$ $\beta_L=2\pi$,

Spherical (resp. $S^2\times \mathbb{R}$) geometry $\Leftrightarrow$ $\beta_L<2\pi < \beta_U$.

\section{$\mathcal{F}_1$: Manifolds with finite fundamental group}\label{s4}

The Seifert manifolds with finite fundamental group and orbit space $S^{2}$ have at most three exceptional fibres and Euler number $e(M)\neq 0$. All of them are spherical manifolds, therefore for all the conemanifold structures with singular set only one exceptional or general fibre, $2\pi$ lies between the limits of sphericity:  $\beta_{L} <2\pi <\beta_{U}$.
Manifolds with less or equal than two exceptional fibres are lens spaces. If the manifold has three exceptional fibres, they are of multiplicity $(2,2,n),\, (2,3,3),\, (2,3,4),\, (2,3,5)$. Because it is an interesting particular case,  we first study  the case of the Hopf fibration, although it could be included in the general study for Seifert fibrations in lens spaces.
\subsection{$S^3$ with no exceptional fibres: the Hopf fibration}\hspace*{\fill} \\

The Hopf fibration on $S^3$ is the Seifert manifold
\[
M=(O,o,0\, |\, -1;)=\left\langle O,o,0\, |\, -1;(1,0),(1,0),(1,0) \right\rangle
\]
This Seifert structure in $S^3$ has  conemanifold structures $(S^3,(\beta_1,\beta_2,\beta_3))$ with singularity three fibres of the Hopf fibration with angle $\beta_i$ on the fibre $a_i$, whose geometry depends on the region containing the point
\[
(\alpha_1,\alpha_2,\alpha_3)=\left( \frac{\beta_1}{2}, \frac{\beta_2}{2},\frac{\beta_3}{2}\right)
\]
 in Figure \ref{fregionesalfa}, according to Theorem \ref{t3fibras}. The three geometries (Spherical, Nil and $\widetilde{SL(2,\mathbb{R}})$) are possible.

The possibilities with only two fibres as singular set, the Hopf link, correspond to points in the faces of the cube in Figure \ref{fregionesalfa}
\begin{eqnarray*}
  \Pi_1:  \alpha_1 &=& \pi ,   \\
  \Pi_2:  \alpha_2 &=& \pi \,\, \text{and}  \\
  \Pi_3:  \alpha_3 &=& \pi .
\end{eqnarray*}

\begin{remark}
There are  conemanifold spherical structures $(S^3,(\beta,\beta))$, with singular set the Hopf link, where  $0<\beta \leq 2\pi$. The fibres of the Hopf fibration are geodesics of the geometric structure.
\end{remark}

\begin{remark}
The Seifert manifold $S^3=(O,o,0\, |\, -1;)$ admits no geometric Seifert conemanifold structure with singular set one fibre of this (Hopf) fibration.
\end{remark}

\subsection{Seifert manifold structures in lens spaces}\hspace*{\fill} \\

The Seifert manifold
\[
M=(O,o,0\, |\, b;(a_{1},b_{1}),(a_{2},b_{2}))=\left\langle O,o,0\, |\, b;(a_{1},b_{1}),(a_{2},b_{2}),(a_3,b_3) \right\rangle
\]
where $(a_3,b_3)=(1,0)$ and with Euler number number
\[
e=- \frac{ba_1a_2+a_1b_2+a_2b_1}{a_1a_2} \neq 0\quad \Rightarrow \quad m:=ba_1a_2+a_1b_2+a_2b_1\neq 0,
\]
is the lens space $L(m,ra_2+sb_2)$ where $-ra_1+s(ba_1+b_1)=1$ (\cite{O1972}. The orientation in \cite{O1972} is the opposite to the standard one used currently.).

We consider the following three cases
\begin{enumerate}
  \item Two exceptional fibres. Here $(a_{i},b_{i})$ are coprime integers, $0<b_{i}<a_{i}$, $i=1,2$, $1<a_1\leq a_2$, $(a_3,b_3)=(1,0)$;
  \item One exceptional fibre. Here $(a_{1},b_{1})$ are coprime integers, $0<b_{1}<a_{1}$, \, $(a_{2},b_{2})=(a_3,b_3)=(1,0)$;
  \item No exceptional fibres, $(a_{i},b_{i})=(1,0)$,  $i=1,2,3$.
\end{enumerate}

According to Theorem \ref{t3fibras}, the geometry of the  conemanifold structure $(M,(\beta_1,\beta_2,\beta_3))$, where $\beta_i$ is the angle  on the $(a_i,b_i)$-fibre,  depends on the region in Figure \ref{fregionesalfa} to which the point
\[
(\alpha_1,\alpha_2,\alpha_3)=\left( \frac{\beta_1}{2a_1}, \frac{\beta_2}{2a_2},\frac{\beta_3}{2a_3}\right)
\]
belongs.  The three geometries (Spherical, Nil and $\widetilde{SL(2,\mathbb{R}})$) are possible.

According to Theorem \ref{t2fibras}, the geometry when the singular set is the union of  two  $(a_i,b_i)$-fibres is given by a point $(\alpha_1,\alpha_2,\alpha_3)$ belonging to the intersection of the cube in Figure \ref{fregionesalfa} with one of the following planes
\begin{eqnarray*}
  \Pi_1:  \alpha_1 &=& \pi /a_1 , \\
  \Pi_2:  \alpha_2 &=& \pi /a_2  \\
  \Pi_3:  \alpha_3 &=& \pi .
\end{eqnarray*}
In case (1) (two exceptional fibres), the intersection of the cube with $\Pi_1$ or $\Pi_2$ yields  regions, with shape as in Figure \ref{dosalfas}, where the three geometries (Spherical, Nil and $\widetilde{SL(2,\mathbb{R}})$) are possible. The singularity of the Seifert conemanifold structure is the union of  an exceptional fibre and a general fibre.  The plane $\Pi_3$ intersects the cube in one of its faces. Therefore the singular set of the associated conemanifold structures is the union of the two exceptional fibres, in such a way that
the continuous family only depends on one parameter, say $\alpha$. The elements of this continuous family  are the spherical conemanifold structures  $(M,(\beta_1,\beta_2))$, where $\beta_i=2\alpha a_i$, $0<\alpha \leq \pi$, (Theorem \ref{t2fibras}).

In case (2) (one exceptional fibre $(a_1, b_1)$ ($a_2=a_3=1$)), the intersection of the cube with $\Pi_1$ yields  regions as in Figure \ref{dosalfas}, where the three geometries are possible. The intersection of the cube with $\Pi_2$ or $\Pi_3$ are faces of the cube. Then, the singular set of the associated conemanifolds structures is the union of  the  exceptional fibre and a general fibre. As before,
this continuous family only depends on one parameter, say $\alpha$. The elements of this continuous family  are the spherical conemanifold structures $(M,(2\alpha a_1,2\alpha))$,  $0<\alpha \leq \pi$.

In case (3) (no exceptional fibres), ($a_1=a_2=a_3=1$), the intersection of the cube with $\Pi_1$, $\Pi_2$ or $\Pi_3$, are faces of the cube. There exist three analogous continuous families depending on one parameter, say $\alpha$, of conemanifolds structures whose singular set is the union  two general fibres. The elements of this continuous family  are the spherical conemanifold structures  $(M,(2\alpha,2\alpha))$, $0<\alpha \leq \pi$.

\begin{remark}
In particular, when $M$ is $S^3$ with a Seifert structure with two exceptional fibres $(a_{1},b_{1})$ and $(a_{2},b_{2})$, we obtain a continuous family of spherical conemanifold structures $(S^3,(\beta_1,\beta_2))$, where the singular set is the Hopf link and $\beta_i=2\alpha a_i$, $0<\alpha \leq \pi$. The fibres, toroidal knots, are geodesics of the geometry.
\end{remark}

The geometry when the singular set is only one $(a_i,b_i)$-fibre is given by a point $(\alpha_1,\alpha_2,\alpha_3)$ belonging to the intersection of the cube in Figure \ref{fregionesalfa} with one of the following lines
 \[
 \begin{array}{ll}
  \Gamma_1:  \alpha_2 = \pi /a_2, & \quad \alpha_3 = \pi \\
  \Gamma_2:  \alpha_1 = \pi /a_1, & \quad\alpha_3 = \pi  \\
  \Gamma_3:  \alpha_1 = \pi /a_1, & \quad\alpha_2 = \pi /a_2.
\end{array}
 \]
 If the line is contained in the interior of the cube, a continuous family of conemanifold structures exists. This is the case of $\Gamma_3$, when $a_1\neq 1\neq a_2$. If the line is in the interior of a face of the cube, only an orbifold structure exists. For instance, $\Gamma_1$ when $a_2\neq 1$, yields the orbifold structure $(M,a_2)$ whose singular set is the $(a_1,b_1)$-fibre. If the line is an edge of the cube, there is no conemanifold structure. In this case there is only the spherical manifold structure $M$.

 This implies that in a Seifert manifold with two exceptional fibres, geometric conemanifold structures  whose singular set is composed of just one fibre, are possible if and only if the singular set is a general fibre.

Let  $\left(L(m,ra_2+sb_2),\beta \right)$ be a conemanifold structure  whose singularity is a general fibre of the Seifert structure $(O,o,0\, |\, b;(a_{1},a_{1}),(a_{2},b_{2}))$. By Theorem \ref{t1fibra},
its limits of sphericity are the angles
\begin{equation}
   \beta _{L}= 2\frac{a_2a_1-a_2-a_1}{a_2a_1}\pi , \quad  \beta _{U}= 2\frac{a_2a_1-a_2+a_1}{a_2a_1}\pi .
\end{equation}
The geometry is spherical for
\begin{equation}
\beta_{L} <\beta <\beta_{U};
\end{equation}
Nil geometry for $\beta =\beta _{L} $; and $\widetilde{SL(2,\mathbb{R}})$ geometry for
\begin{equation}
0\leq \beta <\beta _{L} .
\end{equation}

\subsection{$\{a_1,a_2,a_3\}=\{2,2,n\}$: prism manifold}\hspace*{\fill} \\

The manifold
\[
M=(O,o,0\, |\, b;(2,1),(2,1),(n,b_{3}))=\left\langle O,o,0\, |\, -1;(2,1),(2,1),(n,(b+1)n+b_3)\right\rangle
\]
where $(n,b_{3})$ are coprime integers, $0<b_{3}<n$, $n\geq 2$ and
\[
e= -\frac{bn+n+b_3}{n}  \neq 0 \quad \Rightarrow \quad m:=(b+1)n+b_3\neq 0
\]
is called the \emph{prism manifold}  $P_{n,m}$. It is the manifold obtained from a prism with base a regular polygon of $4m$ edges by identifying the botton with the top of the prism by a $(\pi/2m)$ right rotation, and each lateral face of the prism is identify with the opposite face by a $(\pi /2)$ right rotation.

The manifold $P_{n,1}=(O,o,0\, |\, -1;(2,1),(2,1),(n,1))$ is the manifold of spherical tessellations $M(Sn22)$ (\cite{M1987}).
The simplest example is the \emph{quaternionic manifold}
\[
P_{(2,1)}=(O,o,0\, |\, -1;(2,1),(2,1),(2,1))
 \]
 whose fundamental group is the quaternion group of 8 elements. The manifold is the quotient of $S^{3}$ by the action of this group and is has a cube as a fundamental domain. Each face of the cube is identify with the opposite by a $(\pi /2)$ right rotation.

This prism manifold has  conemanifold structures $(M,(\beta_1,\beta_2,\beta_3))$ with angle $\beta_i$ on the $(a_i,b_i)$-fibre. According to Theorem \ref{t3fibras}, the  geometry of $(M,(\beta_1,\beta_2,\beta_3))$ depends on the regions in Figure \ref{fregionesalfa} to which the point
\[
(\alpha_1,\alpha_2,\alpha_3)=\left( \frac{\beta_1}{4}, \frac{\beta_2}{4},\frac{\beta_3}{2n}\right)
\]
belongs. The three geometries (Spherical, Nil and $\widetilde{SL(2,\mathbb{R})}$) are possible.

The possibilities with only two fibres as singular set are obtained by the intersection of the cube in Figure \ref{fregionesalfa} with the planes
\begin{eqnarray*}
  \Pi_1:  \alpha_1 &=& \pi /4 , \\
  \Pi_2:  \alpha_2 &=& \pi /4  \,\, \text{and}\\
  \Pi_3:  \alpha_3 &=& \pi /2n
\end{eqnarray*}
 giving  regions as in Figure \ref{dosalfas}, where the three geometries (Spherical, Nil and $\widetilde{SL(2,\mathbb{R}})$) are possible.

The  conemanifold structure $\left(P_{(n,m)},\beta \right)$, with the fibre of order $n$  of the above Seifert structure as singular set has the following limits of sphericity
\begin{equation}
   \beta _{L}= 0 , \quad  \beta _{U}= 2n\pi .
\end{equation}
The geometry is spherical for
\begin{equation}
\beta_{L} <\beta <\beta_{U}= 2n\pi ;
\end{equation}
Nil geometry for $\beta =\beta _{L}=0 $; and there are no $\widetilde{SL(2,\mathbb{R})}$ geometric conemanifolds.

\begin{remark}
The exterior of the fibre of order $n$ in the prism manifold $P_{(n,m)}$ has a complete Nil geometry. There are no $\widetilde{SL(2,\mathbb{R})}$ geometric conemanifold structures on $P_{(n,m)}$ with the fibre of order $n$ as singular set.
\end{remark}

The prism manifold has also   conemanifold structures $\left(P_{(n,m)},\beta' \right)$, whose singularity is a $(2,1)$-fibre  of the above Seifert structure, with the following limits of sphericity
\begin{equation}
   \beta' _{L}= 2(n-2)\pi , \quad  \beta' _{U}= 2(n+2)\pi .
\end{equation}
The geometry is spherical for
\begin{equation}
\beta'_{L}= 2(n-2)\pi  <\beta' <\beta'_{U}= 2(n+2)\pi;
\end{equation}
Nil geometry for $\beta' =\beta _{L}=2(n-2)\pi  $; and $\widetilde{SL(2,\mathbb{R})})$ geometry for
\begin{equation}
0\leq \beta' <\beta' _{L}=2(n-2)\pi .
\end{equation}

In particular, if $n=2$, it follows that there are no $\widetilde{SL(2,\mathbb{R})}$ geometric conemanifold structures $\left(P_{(2,m)},\beta \right)$ with singular set an exceptional fibre.

\subsection{$\{a_1,a_2,a_3\}=\{2,3,3\}$}\hspace*{\fill} \\

The manifold
\[
M=(O,o,0\, |\, b;(2,1),(3,b_2),(3,b_{3}))=\left\langle O,o,0\, |\, -1;(2,1),(3,b_2),(3,(b+1)3+b_3)\right\rangle
\]
where $b_{2}$ and $b_{3}$ are 1 or 2,  and
\[
e= - \frac{6b+3+2(b_2+b_3)}{6}  \neq 0 \quad \Rightarrow \quad m:=6b+3+2(b_2+b_3)\neq 0.
\]
is denoted by $T(m)$. Observe that the odd integer $m$ determines $b$ and $b_2,\, b_3$ up to order.

The simplest case is $T(1)=(O,o,0\, |\, -1;(2,1),(3,1),(3,1))$ whose fundamental group is the binary tetrahedral group $T^*$ of order 24, and it is the manifold of spherical tessellations $M(S332)$ (\cite{M1987}).

The Seifert manifold $T(m)$ has geometric conemanifolds  structures whose singular set is composed by three or two exceptional fibres  according to Theorem \ref{t3fibras} and Theorem \ref{t2fibras} respectively. The three geometries (Spherical, Nil and $\widetilde{SL(2,\mathbb{R}})$) are possible.

The  conemanifold structures $\left( T(m),\beta \right) $, whose singularity is a $(3,1)$-fibre of the above Seifert structure, have the following limits of sphericity
\begin{equation}
   \beta _{L}= \pi , \quad  \beta _{U}= 5\pi .
\end{equation}
The geometry is spherical for
\begin{equation}
 \pi <\beta <  5\pi ;
\end{equation}
Nil geometry for $\beta =\pi $; and  $\widetilde{SL(2,\mathbb{R})}$ geometry for $\beta < \pi $. Therefore $\left( T(m),\pi \right) $ is a Nil orbifold whose singular set is a $(3,1)$-fibre with label 2.

The  conemanifold structures $\left(T(m),\beta \right)$, whose singularity is the $(2,1)$-fibre  of the above Seifert structure, have the following limits of sphericity
\begin{equation}
   \beta' _{L}= \frac{8}{3}\pi , \quad  \beta' _{U}= 4\pi .
\end{equation}
The geometry is spherical for
\begin{equation}
\frac{8}{3}\pi <\beta <  4\pi ;
\end{equation}
Nil geometry for $\beta =\frac{8}{3}\pi $; and  $\widetilde{SL(2,\mathbb{R})}$ geometry for $\beta < \frac{8}{3}\pi $.

\subsection{$\{a_1,a_2,a_3\}=\{2,3,4\}$}\hspace*{\fill} \\

The manifold
\[
M=(O,o,0\, |\, b;(2,1),(3,b_2),(4,b_{3}))=\left\langle O,o,0\, |\, -1;(2,1),(3,b_2),(4,(b+1)4+b_3)\right\rangle
\]
where $b_{2}$ is 1 or 2, and $b_{3}$ is 1, 2 or 3,  and
\[
e=- \frac{12b+6+4b_2+3b_3}{12}  \neq 0 \quad \Rightarrow \quad m:=12b+6+4b_2+3b_3\neq 0.
\]
is  denoted  by $O(m)$. Observe that the integer $m$, determines $b,\, b_2,\, b_3$.

The simplest case is $O(1)=(O,o,0\, |\, -1;(2,1),(3,1),(4,1))$ whose fundamental group is the binary octahedral group $O^*$ of order 48, and it is the manifold of spherical tessellations $M(S432)$ (\cite{M1987}).

The Seifert manifold $O(m)$ has geometric conemanifold  structures, whose singular set is composed by three or two exceptional fibres, according to Theorem \ref{t3fibras} and Theorem \ref{t2fibras}. The three geometries (Spherical, Nil and $\widetilde{SL(2,\mathbb{R}})$) are possible.

The  conemanifold structures $\left(O(m),\beta \right)$ whose singular set is  the $(4,b_3)$-fibre  of the above Seifert structure, have the following limits of sphericity
\begin{equation}
   \beta _{L}= \frac{4}{3}\pi , \quad  \beta _{U}= \frac{20}{3}\pi .
\end{equation}
The geometry is spherical for
\begin{equation}
\frac{4}{3} \pi <\beta < \frac{20}{3} \pi ;
\end{equation}
Nil geometry for $\beta =\frac{4}{3}\pi $; and  $\widetilde{SL(2,\mathbb{R}})$ geometry for $\beta < \frac{4}{3}\pi $.

The  conemanifold structures $\left(O(m),\beta \right)$, whose singular set is  the $(3,b_2)$-fibre of the above Seifert structure, have the following limits of sphericity
\begin{equation}
   \beta _{L}= \frac{3}{2}\pi , \quad  \beta _{U}= 3\pi .
\end{equation}
The geometry is spherical for
\begin{equation}
\frac{3}{2} \pi <\beta < 3 \pi ;
\end{equation}
Nil geometry for $\beta =\frac{3}{2}\pi $; and  $\widetilde{SL(2,\mathbb{R}})$ geometry for $\beta < \frac{3}{2}\pi $.

The  conemanifold structure $\left(O(m),\beta \right)$, whose singular set is  the $(2,1)$-fibre of the above Seifert structure, have the following limits of sphericity
\begin{equation}
   \beta _{L}= \frac{5}{3}\pi , \quad  \beta _{U}= \frac{11}{3}\pi .
\end{equation}
The geometry is spherical for
\begin{equation}
\frac{5}{3} \pi <\beta < \frac{11}{3} \pi ;
\end{equation}
Nil geometry for $\beta =\frac{5}{3}\pi $; and  $\widetilde{SL(2,\mathbb{R}})$ geometry for $\beta < \frac{5}{3}\pi $.

\subsection{$\{a_1,a_2,a_3\}=\{2,3,5\}$}\hspace*{\fill} \\

The manifold
\[
M=(O,o,0\, |\, b;(2,1),(3,b_2),(5,b_{3}))=\left\langle O,o,0\, |\, -1;(2,1),(3,b_2),(5,(b+1)5+b_3)\right\rangle
\]
where $b_{2}$ is 1 or 2, and $b_{3}$ is 1, 2, 3 or 4,  and
\[
-e= \frac{30b+15+10b_2+6b_3}{30}  \neq 0 \quad \Rightarrow \quad m:=30b+15+10b_2+6b_3\neq 0.
\]
is  denoted  by $I(m)$. Observe that the integer $m$, determines $b,\, b_2,\, b_3$.

The simplest case is $I(1)=(O,o,0\, |\, -1;(2,1),(3,1),(5,1))$, whose fundamental group is the binary icosahedral group $I^*$ of order 120, and it is the manifold of spherical tessellations $M(S532)$ (\cite{M1987}).

The Seifert manifold $I(m)$ has geometric conemanifold  structures, whose singular set is composed by three or two exceptional fibres, according to Theorem \ref{t3fibras} and Theorem \ref{t2fibras}. The three geometries (Spherical, Nil and $\widetilde{SL(2,\mathbb{R}})$) are possible.

The  conemanifold structures $\left(I(m),\beta \right)$, whose singular set is  the $(5,b_3)$-fibre of the above Seifert structure,  have the following limits of sphericity
\begin{equation}
   \beta _{L}= \frac{5}{3}\pi , \quad  \beta _{U}= \frac{25}{3}\pi .
\end{equation}
The geometry is spherical for
\begin{equation}
\frac{5}{3} \pi <\beta < \frac{25}{3} \pi ;
\end{equation}
Nil geometry for $\beta =\frac{5}{3}\pi $; and  $\widetilde{SL(2,\mathbb{R}})$ geometry for $\beta < \frac{5}{3}\pi $.

The  conemanifold structures $\left(I(m),\beta \right)$, whose singular set is  the $(3,b_2)$-fibre of the above Seifert structure, have the following limits of sphericity
\begin{equation}
   \beta _{L}= \frac{9}{5}\pi , \quad  \beta _{U}= \frac{21}{5}\pi .
\end{equation}
The geometry is spherical for
\begin{equation}
\frac{9}{5} \pi <\beta < \frac{21}{5} \pi ;
\end{equation}
Nil geometry for $\beta =\frac{9}{5}\pi $; and  $\widetilde{SL(2,\mathbb{R}})$ geometry for $\beta < \frac{9}{5}\pi $.

The  conemanifold structures $\left(I(m),\beta \right)$, whose singular set is  the $(2,1)$-fibre of the above Seifert structure, have the following limits of sphericity
\begin{equation}
   \beta _{L}= \frac{28}{15}\pi , \quad  \beta _{U}= \frac{52}{15}\pi .
\end{equation}
The geometry is spherical for
\begin{equation}
\frac{28}{15} \pi <\beta < \frac{52}{15} \pi ;
\end{equation}
Nil geometry for $\beta =\frac{28}{15}\pi $; and  $\widetilde{SL(2,\mathbb{R}})$ geometry for $\beta < \frac{28}{15}\pi $.

\section{$\mathcal{F}_2$: Seifert manifolds with Nil or Euclidean geometry}\label{s5}

We now study the Seifert manifolds with orbit space $S^{2}$  and with three exceptional fibres possessing  a Nil or Euclidean geometry. Because the orbit space is a Euclidean orbifold, the multiplicities of the exceptional fibres are $(3,3,3),\, (2,4,4)$ and $(2,3,6)$.

All of them have geometric conemanifold structures with three or two exceptional fibres as singular set according to Theorem \ref{t3fibras} and Theorem \ref{t2fibras}. The three geometries, Spherical, Nil and $\widetilde{SL(2,\mathbb{R}})$ ($S^2\times \mathbb{R}$, Euclidean and $H^2\times \mathbb{R}$), when the Euler class $e\neq 0$ ($e=0$), are possible.

 Next we collect the results for the conemanifold structures on these manifolds whose singular set is just one exceptional fibre.
Observe that in all cases $\beta_{L}=2\pi$, and therefore the corresponding spherical ($S^2\times \mathbb{R}$) conemanifold structures, for $e\neq 0$ ($e=0$), have angle $\beta>2\pi$.

\subsection{$\{a_1,a_2,a_3\}=\{3,3,3\}$}\hspace*{\fill} \\

The manifold
\[
(O,o,0\, |\, b;(3,b_1),(3,b_2),(3,b_{3}))=\left\langle O,o,0\, |\, -1;(3,b_1),(3,b_2),(3,(b+1)3+b_3)\right\rangle
\]
where $b_{i}$ is 1 or 2,  and
\[
e=- \frac{3b+b_1+b_2+b_3}{3}   \quad \Rightarrow \quad m:=3b+b_1+b_2+b_3.
\]
is  denoted  by $N_{(3,3,3)}(m,n)$, where $n=\min (b_1,b_2,b_3)$. Observe that the integer $m$ and $n\in \{1,2\}$, determine $\{ b,\, b_1\, b_2,\, b_3\}$. In fact when $m$ is a multiple of 3, $m=3k$, two solutions for $\{ b, b_1,b_2,b_3 \}$ are posible: $\{ k-1, 1,1,1 \}$ and $\{ k-2, 2,2,2 \}$. For $m=3k+1$ the solution is $\{ k-1, 1,1,2 \}$, and for $m=3k+2$ the solution is $\{ k-1, 1,2,2 \}$.

The Euclidean cases correspond to $e=0$ $\Rightarrow$ $m=0$. They  are
\[
N_{(3,3,3)}(0,1)=(O,o,0\, |\, -1;(3,1),(3,1),(3,1))\]
\[
N_{(3,3,3)}(0,2)=(O,o,0\, |\, -2;(3,2),(3,2),(3,2))
\]

The first one, $N_{(3,3,3)}(0,1)$,  is the manifold of Euclidean tessellations $M(S333)$ (\cite{M1987}).

The  conemanifold structures $\left(N_{(3,3,3)}(m,n),\beta \right)$, whose singular set is an exceptional fibre of the above Seifert structure,  have the following limits of sphericity
\begin{equation}
   \beta _{L}= 2\pi , \quad  \beta _{U}= 4\pi .
\end{equation}
The geometry is spherical ($S^2\times \mathbb{R}$) for
\begin{equation}
2 \pi <\beta < 4 \pi ;
\end{equation}
Nil (Euclidean) geometry for $\beta =2\pi $; and  $\widetilde{SL(2,\mathbb{R}})$ ($H^2\times \mathbb{R}$) geometry for $\beta < 2\pi $.

\subsection{$\{a_1,a_2,a_3\}=\{2,4,4\}$}\hspace*{\fill} \\

The manifold
\[
(O,o,0\, |\, b;(2,1),(4,b_2),(4,b_{3}))=\left\langle O,o,0\, |\, -1;(2,1),(4,b_2),(4,(b+1)4+b_3)\right\rangle
\]
where $b_{i}$ is 1 or 3,  and
\[
e=- \frac{4b+2+b_2+b_3}{4}   \quad \Rightarrow \quad m:=4b+2+b_2+b_3.
\]
is  denoted by $N_{(2,4,4)}(m,n)$, where $n=\min (b_2,b_3)$. Observe that the even integer $m$ and $n\in \{1,3\}$, determine $b,\, b_2,\, b_3$. In fact when $m$ is a multiple of 4, $m=4k$, two solutions for $\{ b, b_2,b_3 \}$ are posible: $\{ k-1, 1,1 \}$ and $\{ k-2, 3,3 \}$. For $m=4k+2$ the solution is $\{ k-1, 1,3 \}$.

The Euclidean cases correspond to $e=0$ $\Rightarrow$ $m=0$. They  are
\[
N_{(2,4,4)}(0,1)=(O,o,0\, |\, -1;(2,1),(4,1),(4,1))\]
\[
 N_{(2,4,4)}(0,3)=(O,o,0\, |\, -2;(2,1),(4,3),(4,3))
\]
The first one, $N_{(2,4,4)}(0,1)$,  is the manifold of Euclidean tessellations $M(S244)$ (\cite{M1987}).

The  conemanifold structures $\left(N_{(2,4,4)}(m),\beta \right)$, whose singular set is the $(2,1)$-fibre  of the above Seifert structure, have the following limits of sphericity
\begin{equation}
   \beta _{L}= 2\pi , \quad  \beta _{U}= 4\pi .
\end{equation}
The geometry is spherical ($S^2\times \mathbb{R}$) for
\begin{equation}
2 \pi <\beta < 4 \pi ;
\end{equation}
Nil (Euclidean) geometry for $\beta =2\pi $; and  $\widetilde{SL(2,\mathbb{R}})$ ($H^2\times \mathbb{R}$)geometry for $\beta < 2\pi $.

The  conemanifold structures $\left(N_{(2,4,4)}(m,n),\beta \right)$, whose singular set is a $(4,b_i)$-fibre of the above Seifert structure,  have the following limits of sphericity
\begin{equation}
   \beta _{L}= 2\pi , \quad  \beta _{U}= 3\pi .
\end{equation}
The geometry is spherical ($S^2\times \mathbb{R}$) for
\begin{equation}
2 \pi <\beta < 3 \pi ;
\end{equation}
Nil (Euclidean) geometry for $\beta =2\pi $; and  $\widetilde{SL(2,\mathbb{R}})$ ($H^2\times \mathbb{R}$)geometry for $\beta < 2\pi $.

\subsection{$\{a_1,a_2,a_3\}=\{2,3,6\}$}\hspace*{\fill} \\

The manifold
\[
(O,o,0\, |\, b;(2,1),(3,b_2),(6,b_{3}))=\left\langle O,o,0\, |\, -1;(2,1),(3,b_2),(6,(b+1)6+b_3)\right\rangle
\]
where $b_{2}$ is 1 or 2,  $b_{3}$ is 1 or 5,  and
\[
e= -\frac{6b+3+2b_2+b_3}{6}   \quad \Rightarrow \quad m:=6b+3+2b_2+b_3.
\]
is  denoted  by $N_{(2,3,6)}(m,n)$, where $n=\min (b_2,b_3)$. Observe that the integer $m$ which is a even integer, and $n\in \{1,2\}$ determine $b,\, b_2,\, b_3$. In fact when $m$ is a multiple of 6, $m=6k$, two solutions for $\{ b, b_2,b_3 \}$ are posible: $\{ k-1, 1,1 \}$ and $\{ k-2, 2,5 \}$. For $m=6k+2$ the solution is $\{ k-1, 2,1 \}$, and for $m=6k+4$ the solution is $\{ k-1, 1,5 \}$.

\[
N_{(2,3,6)}(0,1)=(O,o,0\, |\, -1;(2,1),(3,1),(6,1))\]
\[
 N_{(2,3,6)}(0,2)=(O,o,0\, |\, -2;(2,1),(3,2),(6,5))
\]
The first one, $N_{(2,4,4)}(0,1)$,  is the manifold of Euclidean tessellations $M(S236)$ (\cite{M1987}).

The  conemanifold structures $\left(N_{(2,3,6)}(m),\beta \right)$, whose singular set is the $(2,1)$-fibre  of the above Seifert structure,  have the following limits of sphericity
\begin{equation}
   \beta _{L}= 2\pi , \quad  \beta _{U}= \frac{8}{3}\pi .
\end{equation}
The geometry is spherical ($S^2\times \mathbb{R}$) for
\begin{equation}
2 \pi <\beta < \frac{8}{3} \pi ;
\end{equation}
Nil (Euclidean) geometry for $\beta =2\pi $; and  $\widetilde{SL(2,\mathbb{R}})$ ($H^2\times \mathbb{R}$) geometry for $\beta < 2\pi $.

The  conemanifold structures $\left(N(m)_{(2,3,6)},\beta \right)$, whose singular set is the $(3,b_2)$-fibre  of the above Seifert structure, have the following limits of sphericity
\begin{equation}
   \beta _{L}= 2\pi , \quad  \beta _{U}= 4\pi .
\end{equation}
The geometry is spherical ($S^2\times \mathbb{R}$) for
\begin{equation}
2 \pi <\beta < 4 \pi ;
\end{equation}
Nil (Euclidean) geometry for $\beta =2\pi $; and  $\widetilde{SL(2,\mathbb{R}})$ ($H^2\times \mathbb{R}$) geometry for $\beta < 2\pi $.

The  conemanifold structures $\left(N(m)_{(2,3,6)},\beta \right)$, whose singular set is the $(6,b_3)$-fibre of the above Seifert structure,  have the following limits of sphericity
\begin{equation}
   \beta _{L}= 2\pi , \quad  \beta _{U}= 10\pi .
\end{equation}
The geometry is spherical ($S^2\times \mathbb{R}$) for
\begin{equation}
2 \pi <\beta < 10 \pi ;
\end{equation}
Nil (Euclidean) geometry for $\beta =2\pi $; and  $\widetilde{SL(2,\mathbb{R}})$ ($H^2\times \mathbb{R}$) geometry for $\beta < 2\pi $.

\section{$\mathcal{F}_3$: Manifolds obtained by Dehn surgery along the torus knot $K_{(r,s)}$}\label{s6}

The torus knot $K_{(r,s)}$ is a general fibre of the Seifert manifold structure in $S^{3}$
\[
S_{(r,s)}= (O,o,0\, |\, -1;(s,b_1),(r,b_2))
\]
where $(s,b_1)$, $(r,b_2)$ and $(r,s)$ are pairs of coprime integer numbers, $0<b_1<s$, $0<b_2<r$ and $|-rs+b_1r+b_2s|=1$. We suppose  that $r>s>1$. Actually there are two possible values for the pair $(b_1 ,b_2)$ with the above conditions. They  correspond to the two possible different orientations on $S^{3}$, defining the torus knot $K_{(r,s)}$ and its mirror image $K^{*}_{(r,s)}$. Concretely, for $-rs+b_1r+b_2s=1$ the general fibre is the right handle torus knot, and for $-rs+b_1r+b_2s=-1$ the general fibre is the left handle torus knot.

The classification of the manifolds obtained by $(p/q)$-Dehn surgery along a torus knot is given in \cite{M1971}. Recall the concept of $(p/q)$-Dehn surgery along a knot $K\subset S^{3}$: Consider a homology basis $(\overrightarrow{M},\overrightarrow{L})$ of the boundary of a regular neighborhood $N$ of the oriented knot $\overrightarrow{K}$, where $\overrightarrow{M}$ is an oriented meridian and $\overrightarrow{L}$ is an oriented canonical longitude homologous to $\overrightarrow{K}$ in $N$, such that the linking number of $\overrightarrow{M}$ and $\overrightarrow{K}$ is +1 in $\overrightarrow{S^3}$.  The result of $(p/q)$-Dehn surgery is the manifold $\overline{S^{3}\setminus N}\bigcup_{h} (D^{2}\times S^{1})$ where the homeomorphism $h:\partial(D^{2}\times S^{1})\longrightarrow \partial N$ is defined by $h(\partial D^{2})=p\overrightarrow{M}+q\overrightarrow{L}$. We suppose $p>0$ because $(p/q)=((-p)/(-q))$.

If $qrs+p\neq 0$, the result of $(p/q)$-Dehn surgery in a left handle torus knot $K_{(r,s)}$ is the Seifert manifold
\[
S_{(r,s)}(p/q)= \left\langle O,o,0\, |\, -1;(s,b_1),(r,b_2),(|qrs+p|,\varepsilon q) \right\rangle
\]
where $\varepsilon =1$ if $qrs+p>0$ and $\varepsilon =-1$ if $qrs+p<0$, and $-rs+b_1r+b_2s=-1$.

If $-qrs+p\neq 0$, the result of $(p/q)$ Dehn surgery in a right handle torus knot $K^{*}_{(r,s)}$ is the Seifert manifold
\[
S'_{(r,s)}(p/q)= \left\langle O,o,0\, |\, -1;(s,b_1),(r,b_2),(|-qrs+p|,\varepsilon q) \right\rangle
\]
where $\varepsilon =1$ if $-qrs+p>0$ and $\varepsilon =-1$ if $-qrs+p<0$, and $-rs+b_1r+b_2s=1$.

The reason of that difference is that the canonical longitud $\overrightarrow{L}$ is related to the toroidal longitud $\overrightarrow{l_{t}}$ (the longitude which is a fibre of the Seifert fibration) by the formula $\overrightarrow{l_{t}}= \overrightarrow{L}-rs\overrightarrow{M}$ for the left handle torus and $\overrightarrow{l_{t}}= \overrightarrow{L}+rs\overrightarrow{M}$ for the right handle case.

The information about the geometric conemanifold structures on the Seifert manifolds obtained by Dehn surgery along a left (right) handle torus knot $K_{(r,s)}$ ($K^{*}_{(r,s)}$) can be collected in a 2-positive coordinate graph. We develop here the case of the left handle torus knots.  The points in the graph bearing the symbol $\blacklozenge$ correspond to points $(m,n)$ such that $m,n\in \mathbb{Z}$, $\gcd (m,n)=1$. The point $(x,y)$, such that $x/y$ is a rational number ($x/y=m/n$, $m,n\in \mathbb{Z}$, $\gcd (m,n)=1$) represents the conemanifold $\left(S_{(r,s)}(p/q),\beta =2\pi m/x \right)$, where
\[
m=|p+qrs|,\quad  n=\varepsilon q, \quad \Rightarrow \quad \frac{p}{q}=\frac{m-rsn}{n}.
 \]
 The points in the straight line through the origin $l_{m/n}$ with rational slope $m/n$ parametrize the geometric conemanifold structures in the Seifert manifold $S_{(r,s)}((m-rsn)/n)$ when $x > \frac{rs}{rs-r+s}$, by Theorem \ref{t1fibra}. The right handle case is similar but the line $l_{m/n}$ represents the oriented manifold $S'_{(r,s)}((m+rsn)/n)$ resulting from  $((m+rsn)/n)$-Dehn surgery on the right handle torus knot $K^{*}_{(r,s)}$.

  Actually, it follows from (\ref{ebetaalfa}) and from the expressions  $m=|p+qrs|$ and $\beta =2\pi m/x$, that the $x$ coordinate is equal to $\pi /\alpha$, that is, $\alpha= \pi /x$. Hence in the graphs the vertical lines represents angles.

 Consider the manifold $S_{(r,s)}((m-rsn)/n)$. Its conemanifold structure corresponds to the line $l_{m/n}$. The geometric structures supported by this conemanifold structure correspond to the points in this line. If the Euler class of the manifold $S_{(r,s)}((m-rsn)/n)$ is different from zero, the possible geometries for the conemanifold structure are spherical, Nil or $\widetilde{SL(2,\mathbb{R}})$. The geometry is spherical for
\[
\frac{rs}{rs-r+s}<x<\frac{rs}{rs-r-s};
 \]
 Nil for $x=\frac{rs}{rs-r-s}$; and $\widetilde{SL(2,\mathbb{R})}$ for
 \[
 x>\frac{rs}{rs-r-s}.
  \]
 And the intersection of the vertical lines $\mathcal{U}:x=\frac{rs}{rs-r+s}$ and $\mathcal{L}:x=\frac{rs}{rs-r-s}$  with $l_{m/n}$  are the limits of sphericity $\beta _{U}$ and $\beta_{L}$ respectively of these geometric conemanifold  structures.

 If the Euler class of the manifold $S_{(r,s)}((m-rsn)/n)$ is zero, the possible geometries for the conemanifold structure are $S^2\times \mathbb{R}$, Euclidean or $H^2\times \mathbb{R}$. The geometry is $S^2 \times \mathbb{R}$ for
\[
\frac{rs}{rs-r+s}<x<\frac{rs}{rs-r-s};
 \]
 Euclidean for $x=\frac{rs}{rs-r-s}$; and $H^2 \times \mathbb{R}$ for
 \[
 x>\frac{rs}{rs-r-s}.
  \]The following lema identify these manifolds.

 \begin{lemma}
 The $(0/1)$-Dehn surgery in the left (right) handle torus knot $K_{(r,s)}$ ($K^{*}_{(r,s)}$) yields Seifert manifold with $S^2\times \mathbb{R}$, Euclidean or $H^2\times \mathbb{R}$ geometry.
 \end{lemma}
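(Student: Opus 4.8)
The plan is to read the Seifert invariants of $S_{(r,s)}(0/1)$ (resp. $S'_{(r,s)}(0/1)$) straight off the surgery formulas displayed just above, compute the Euler number $e$, and observe that it vanishes; by the table in Section~\ref{s1} relating the pair $(e,\chi(\mathbf{B}))$ to Thurston's geometries, $e=0$ already forces the geometry to be one of $S^2\times\mathbb{R}$, $E^3$ or $H^2\times\mathbb{R}$, which is exactly the assertion.

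Concretely, for the left-handed knot we set $p=0$, $q=1$ in
$S_{(r,s)}(p/q)=\left\langle O,o,0\,|\,-1;(s,b_1),(r,b_2),(|qrs+p|,\varepsilon q)\right\rangle$
with $-rs+b_1r+b_2s=-1$. Then $|qrs+p|=rs$ and $qrs+p=rs>0$, so $\varepsilon=1$ and $\varepsilon q=1$, giving $S_{(r,s)}(0/1)=\left\langle O,o,0\,|\,-1;(s,b_1),(r,b_2),(rs,1)\right\rangle$. Applying the formula for $e$ from Section~\ref{s1},
\[
e=1-\frac{b_1}{s}-\frac{b_2}{r}-\frac{1}{rs}=\frac{rs-(b_1r+b_2s)-1}{rs},
\]
and since $-rs+b_1r+b_2s=-1$ means $b_1r+b_2s=rs-1$, we get $e=0$. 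For the right-handed knot one runs the same computation with $S'_{(r,s)}(p/q)=\left\langle O,o,0\,|\,-1;(s,b_1),(r,b_2),(|-qrs+p|,\varepsilon q)\right\rangle$ and $-rs+b_1r+b_2s=1$: here $q=1$ gives $|-qrs+p|=rs$ and $-qrs+p=-rs<0$, so the third pair is $(rs,-1)$, which we normalise to $(rs,rs-1)$ at the cost of replacing $b=-1$ by $b=-2$; then $e=2-\tfrac{b_1}{s}-\tfrac{b_2}{r}-\tfrac{rs-1}{rs}=\tfrac{rs-(b_1r+b_2s)+1}{rs}$, and $b_1r+b_2s=rs+1$ again yields $e=0$.

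An equivalent and perhaps more conceptual way to see the vanishing of $e$ is to recall that $0/1$-surgery on any knot in $S^3$ produces a $3$-manifold with $H_1(\,\cdot\,;\mathbb{Z})\cong\mathbb{Z}$; for a Seifert manifold over $S^2$ this is possible only when $e=0$ (otherwise $H_1$ is finite), so nothing torus-knot-specific is really used beyond the fact — recorded by the formulas above — that the surgered manifold is Seifert fibered. In either approach, once $e=0$ the precise geometry is read off from $\chi(\mathbf{B})=-1+\tfrac{1}{s}+\tfrac{1}{r}+\tfrac{1}{rs}$, which is $\le 0$ with equality exactly for $(r,s)=(3,2)$; thus the surgered manifold carries $H^2\times\mathbb{R}$ geometry for every torus knot except the trefoil and $E^3$ geometry for the trefoil, though the statement needs only the disjunction.

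I expect no real obstacle here beyond bookkeeping: one must keep straight the two sign conventions (left- versus right-handed knot, and the auxiliary sign $\varepsilon$) and must normalise the third Seifert pair into the range $0<b_i<a_i$ before plugging it into the Euler-number formula. After that the cancellation forced by the torus-knot relation $-rs+b_1r+b_2s=\mp 1$ is immediate, and the conclusion follows from the geometry table.
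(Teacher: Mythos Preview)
Your proof is correct and follows essentially the same route as the paper: both compute the Euler number of the surgered manifold directly from the Seifert invariants and use the relation $-rs+b_1r+b_2s=\mp 1$ to see that it vanishes, then invoke the table in Section~\ref{s1}. The paper phrases the computation for general $(m,n)$ and obtains $e=(m-nrs)/(rsm)$ before specialising, whereas you plug in $p=0$, $q=1$ from the start; your added remarks on $H_1$ and on the sign of $\chi(\mathbf{B})$ are correct but go beyond what the paper records.
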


 \begin{proof}
 The  Seifert manifold
 \[
S_{(r,s)}((m-rsn)/n)= \left\langle O,o,0\, |\, -1;(s,b_1),(r,b_2),(m,n) \right\rangle
\]
 is the result of $((m-rsn)/n)$-Dehn surgery in the left handle torus knot $K_{(r,s)}$, where $-rs+b_1r+b_2s=-1$. Then
 \[
e=1-\frac{b_1}{s}-\frac{b_2}{r}-\frac{n}{m}=\frac{rs-b_2s-b_1r}{rs}-\frac{n}{m}=\frac{1}{rs}-\frac{n}{m}=\frac{m-nrs}{rsm}
\]
Therefore $e=0$ when $m-rsn=0$. This is $(0/1)$-Dehn surgery in the left handle torus knot $K_{(r,s)}$.

Analogously, the Seifert manifold
\[
S'_{(r,s)}((m+rsn)/n)=\left\langle O,o,0\, |\, -1;(s,b_1),(r,b_2),(m,n) \right\rangle
\]
 is the result of $((m+rsn)/n)$-Dehn surgery on the right handle torus knot $K^{*}_{(r,s)}$, where $-rs+b_1r+b_2s=1$. Here
  \[
e=1-\frac{b_1}{s}-\frac{b_2}{r}-\frac{n}{m}=\frac{rs-b_2s-b_1r}{rs}-\frac{n}{m}=\frac{-1}{rs}-\frac{n}{m}=\frac{-m-nrs}{rsm}
\]
Therefore $e=0$ when $-m-rsn=0$. This is $(0/1)$-Dehn surgery in the left handle torus knot $K^{*}_{(r,s)}$.
 \end{proof}

\begin{corollary}
There exists only one line, $l_{m/n}$, in the graph associated a torus knot, whose points  correspond to conemanifold structures with $S^2\times \mathbb{R}$, Euclidean or $H^2\times \mathbb{R}$ geometry. For the left handle torus knot $K_{(r,s)}$ the line is $l_{rs/1}$, and for the right handle torus knot $K^{*}_{(r,s)}$ the line is $l_{rs/(-)1}$.
\end{corollary}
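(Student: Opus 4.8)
The plan is to reduce the statement to the Euler-number computation already performed in the preceding Lemma, combined with the trichotomy of geometries governed by the sign of $e$. First I would recall how the graph encodes the manifolds: a line $l_{m/n}$ through the origin with rational slope $m/n$, where $m,n\in\mathbb{Z}$, $\gcd(m,n)=1$ and $m=|p+qrs|>0$, represents in the left-handed case the Seifert manifold $S_{(r,s)}((m-rsn)/n)=\langle O,o,0\,|\,-1;(s,b_1),(r,b_2),(m,n)\rangle$, and its points parametrize precisely the geometric Seifert conemanifold structures whose singular set is the core of the surgery, as described by Theorem \ref{t1fibra}. The right-handed case is identical with $S'_{(r,s)}((m+rsn)/n)$ in place of $S_{(r,s)}((m-rsn)/n)$.

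Next I would invoke the dichotomy recorded in the table of Section \ref{s1} and in Theorem \ref{t1fibra}: along a fixed line $l_{m/n}$ the geometries that occur are $S^2\times\mathbb{R}$, Euclidean or $H^2\times\mathbb{R}$ exactly when the Euler number of the underlying Seifert manifold is zero, and are spherical, Nil or $\widetilde{SL(2,\mathbb{R})}$ otherwise. Hence the lines of the type asked for in the Corollary are exactly those $l_{m/n}$ for which $e\bigl(S_{(r,s)}((m-rsn)/n)\bigr)=0$ (resp. $e\bigl(S'_{(r,s)}((m+rsn)/n)\bigr)=0$ in the right-handed case).

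The third step is the arithmetic, borrowed verbatim from the proof of the Lemma: $e=\dfrac{m-nrs}{rsm}$ for the left-handed knot and $e=\dfrac{-m-nrs}{rsm}$ for the right-handed knot. So $e=0$ forces $m=nrs$ in the first case and $m=-nrs$ in the second. In either case $n\mid m$, and since $\gcd(m,n)=1$ this gives $n=\pm 1$; since $m=|p+qrs|$ is a positive integer and $rs>0$, the left-handed case yields $n=1$, $m=rs$, that is the line $l_{rs/1}$ (which is the line of $(0/1)$-Dehn surgery singled out by the Lemma), and the right-handed case yields $n=-1$, $m=rs$, that is the line $l_{rs/(-1)}$. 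Uniqueness is automatic because the sign of $n$ is forced and then $m$ is determined.

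I do not anticipate a genuine obstacle: the only point needing a little care is to make explicit, in the parametrization of the graph, that $m$ and $n$ are coprime and that $m>0$, so that the divisibility step $n\mid m\Rightarrow n=\pm 1$ is legitimate and produces a single line rather than an infinite family. Everything else is a direct substitution into the Euler-number formula together with the geometry table.
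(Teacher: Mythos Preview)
Your proposal is correct and follows exactly the route the paper intends: the Corollary is stated without proof as an immediate consequence of the preceding Lemma, and you have simply made explicit the natural argument---combine the Euler-number formula $e=(m-nrs)/(rsm)$ (resp.\ $(-m-nrs)/(rsm)$) with the $e=0$ dichotomy from Theorem~\ref{t1fibra}, then use $\gcd(m,n)=1$ and $m>0$ to pin down $(m,n)=(rs,1)$ (resp.\ $(rs,-1)$). There is nothing to add or change.
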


 \begin{figure}[h]
\begin{center}\epsfig{file=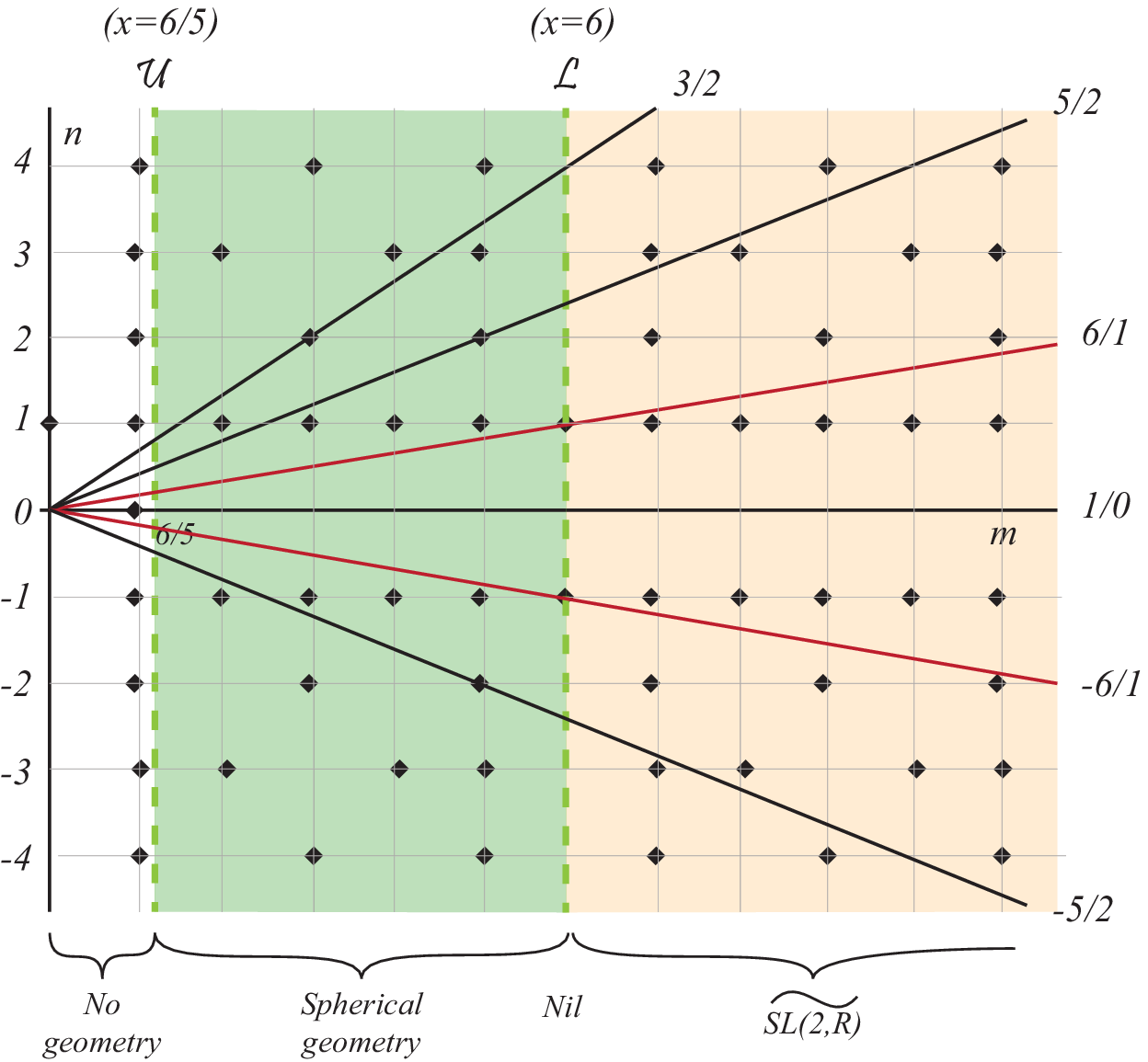,height=6cm}
\caption{The graph for the trefoil knots $K_{(3,2)}$ and $K^{*}_{(3,2)}$}\label{fplot32}\end{center}
\end{figure}

Figure \ref{fplot32} shows the graph for the trefoil knot $K_{(3,2)}$. The left handle trefoil was studied in \cite{LM2015},  where the following affirmations were deduced.
\begin{enumerate}
 \item[(1)] For all $S_{(3,2)}\left( p/q\right)$ the upper sphericity limit $\beta_{U}$ is equal to five times the lower sphericity limit $\beta_{L}$
 \item[(2)] If $\left(S_{(3,2)}\left( p/q\right),z\right)$ has a spherical orbifold structure then $2\leq z\leq 5$.
\item[(3)] There exist infinitely many  Nil orbifold structures:
 \begin{enumerate}
   \item $\left( S_{(3,2)}\left( (2-6y)/y\right),3\right)$, where $\gcd (2,y)=1$.
   \item $\left(S_{(3,2)}\left( (3-6y)/y\right),2\right)$, where $\gcd (3,y)=1$.
   \item $S_{(3,2)}\left( (6-6y)/y\right)$, where $\gcd (6,y)=1$. These are manifolds. Hence
there are infinitely many non-singular Nil manifold structures.
 \item[(4)] If $\left(S_{(3,2)}\left( p/q\right),z\right)$  has a $\widetilde{SL(2,R)}$  orbifold structure, then $z>6$.
 \end{enumerate}
\end{enumerate}

The same graph is associated to the right handle trefoil $K^{*}_{(3,2)}$. The above conclusions (1) (2) and (4) still hold true for the manifold  $S'_{(3,2)}\left( p/q\right)$. But conclusion 3 must be replaced by 3'
\begin{enumerate}
  \item[(3)'] : There exist infinitely many  Nil orbifold structures:
 \begin{enumerate}
   \item $\left( S'_{(3,2)}\left( (2+6y)/y\right),3\right)$, where $\gcd (2,y)=1$.
   \item $\left(S'_{(3,2)}\left( (3+6y)/y\right),2\right)$, where $\gcd (3,y)=1$.
   \item $S'_{(3,2)}\left( (6+6y)/y\right)$, where $\gcd (6,y)=1$. These are manifolds. Hence
   there exist infinitely many non-singular Nil manifold structures.
 \end{enumerate}
\end{enumerate}

We now add the following results:
\begin{remark}
The Seifert manifolds represented by the two lines $l_{6/1}$ and $l_{6/(-1)}$,
\begin{eqnarray*}
 S_{(3,2)}\left( 0/1\right) &=&(O,o,0\, |\, -1;(2,1),(3,1),(6,1)) \\
 S'_{(3,2)}\left( 0/1\right) &=& \left\langle O,o,0\, |\, -1;(2,1),(3,2),(6,-1)\right\rangle ,
\end{eqnarray*}
do not have $S^2\times\mathbb{R}$ orbifold structure with the core of the surgery as singular set. They have  Euclidean manifold structure and all the orbifols structures have $H^2\times\mathbb{R}$ geometry.
\end{remark}

Next, we analize the case of a general torus knot.

To study the existence of spherical orbifolds among the conemanifold structures in $S_{(r,s)}\left( p/q\right)$ we have to study the points with integer coordinates  lying between  the two straight lines $\mathcal{U}:x=\frac{rs}{rs-r+s}$ and $\mathcal{L}:x=\frac{rs}{rs-r-s}$. The Nil geometric orbifolds correspond to integer coordinates in the straight line
 $\mathcal{L}:x=\frac{rs}{rs-r-s}$. Given two integer numbers $r>s>1$ let's define
 \[
 x_{U}=\frac{rs}{rs-r+s} \quad x_{L}=\frac{rs}{rs-r-s}
 \]

\begin{lemma}\label{lxl}
Let $r>s>1$. The rational number $x_{L}$ is less than 2, exactly for all pairs $(r,s)$, with $r\geq 6$ and $s\geq 3$, and for the pair $(5,4)$.
\end{lemma}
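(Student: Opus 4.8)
The statement is purely arithmetic: we must determine for which integer pairs $r>s>1$ the rational number
\[
x_L=\frac{rs}{rs-r-s}
\]
satisfies $x_L<2$. The first thing I would do is rewrite the inequality. Since $r>s>1$ we have $rs-r-s=(r-1)(s-1)-1>0$ whenever $(r-1)(s-1)\geq 2$, which holds for all admissible pairs (the smallest case $r=3,s=2$ gives $(r-1)(s-1)-1=1>0$), so the denominator is a positive integer and the inequality $x_L<2$ is equivalent to $rs<2(rs-r-s)$, i.e.
\[
rs-2r-2s>0,\qquad\text{equivalently}\qquad (r-2)(s-2)>4.
\]
So the whole lemma reduces to the elementary claim: for integers $r>s>1$, one has $(r-2)(s-2)>4$ exactly when $r\geq 6$ and $s\geq 3$, or $(r,s)=(5,4)$.

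The plan for the reduced claim is a short finite case analysis on $s$. If $s=2$ the product $(r-2)(s-2)=0\not>4$, so no pair with $s=2$ works. If $s=3$ the inequality becomes $r-2>4$, i.e. $r>6$, i.e. $r\geq 7$; but one must double-check the boundary: $r=6$ gives $(r-2)(s-2)=4\not>4$, so in fact $s=3$ requires $r\geq 7$. Hmm — this is slightly in tension with the stated conclusion "$r\geq 6$ and $s\geq 3$", so I would re-examine: perhaps the intended statement groups $(6,3)$ among the pairs where $x_L<2$ fails, or perhaps the paper uses $x_L\le 2$ somewhere; I will take care to state precisely the strict inequality and flag that $(6,3)$ gives $x_L=2$ exactly (Nil boundary), listing it explicitly rather than folding it silently into "$r\ge 6,s\ge 3$". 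For $s=4$: the inequality is $2(r-2)>4$, i.e. $r>4$, so $r\geq 5$, giving the exceptional pair $(5,4)$ together with all $(r,4)$, $r\geq 5$. For $s\geq 5$: then $s-2\geq 3$ and $r-2>s-2\geq 3$, so $(r-2)(s-2)\geq 3\cdot 4=12>4$ automatically (using $r\geq s+1\geq 6$). Collecting: $x_L<2$ holds iff $s\geq 3$ and $r\geq 6$ (with the single boundary exclusion $(6,3)$ where equality holds), together with the sporadic pair $(5,4)$; and it fails for $s=2$ (all $r$), for $(r,3)$ with $r\le 6$, and for $(r,4)$ with $r\le 4$ (i.e. none, since $r>s=4$ forces $r\ge5$) — so among $s=4$ all pairs qualify.

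I expect no genuine obstacle here; the only "hard part" is bookkeeping the boundary cases correctly so the statement matches. I would therefore carry out the proof in three clean steps: (i) verify $rs-r-s>0$ for all admissible $(r,s)$, so $x_L$ is a well-defined positive rational and $x_L<2\Leftrightarrow (r-2)(s-2)>4$; (ii) dispose of $s=2$ and $s=3$ by hand, noting the equality case $(6,3)$; (iii) treat $s=4$ directly and $s\geq 5$ by the crude bound $(r-2)(s-2)\geq 3\cdot 4$. A final sentence would reconcile the list with the phrasing of the lemma, pointing out that when $s\ge 4$ the condition $r>s$ already forces $r\ge 5$, so the pair $(5,4)$ is precisely the unique case with $s=4$ and $r<6$, which is why it must be singled out alongside the family $\{r\ge 6,\ s\ge 3\}$.
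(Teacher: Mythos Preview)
Your approach is essentially identical to the paper's: both reduce $x_L<2$ to $(r-2)(s-2)>4$ via the same algebraic manipulation, then finish by a short case analysis on small values of $s$ and $r$.

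One remark on your worry about the boundary pair $(6,3)$. You are right that $(6-2)(3-2)=4$ gives $x_L=2$ exactly, so strictly speaking this pair does not satisfy $x_L<2$. The paper's proof simply asserts ``This is true if $r\geq 6$ and $s\geq 3$'' without flagging $(6,3)$; the reason this causes no trouble in context is that throughout Section~\ref{s6} the pair $(r,s)$ parametrizes a torus knot $K_{(r,s)}$, so $\gcd(r,s)=1$ is tacitly assumed and $(6,3)$ never arises. If you want your write-up to stand on its own as a purely arithmetic statement, your explicit handling of $(6,3)$ is the correct thing to do; in the paper's setting it is moot.
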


\begin{proof}
Observe that
\begin{gather}\label{exl}
x_{L}=\frac{rs}{rs-r-s}<2 \quad \Leftrightarrow \quad rs<2(rs-r-s) \quad \Leftrightarrow \notag \\ \Leftrightarrow \quad 0<rs-2r-2s=(2+r-2)(s-2)-2s=(r-2)(s-2)+2s-2s-4 \quad \Leftrightarrow \notag\\
\Leftrightarrow \quad 4< (r-2)(s-2)
\end{gather}
This is true if $r\geq 6$ and $s\geq 3$. It is false for $s=2$ and any $r$. For the remaining cases $(4,3)$, $(5,3)$  and $(5,4)$, it is  only true for $(5,4)$.
\end{proof}

\begin{lemma}\label{lxl2}
Let $r>2$. The rational number $x_{L}$ for the pairs $(r,2)$ is bigger than 3, exactly for $r=3,4,5$.
\end{lemma}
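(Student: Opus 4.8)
The plan is to specialize the general formula for $x_L$ to $s=2$ and reduce the claimed inequality to a linear one in $r$, so that the "exactly for $r=3,4,5$" becomes an elementary range condition.

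First I would substitute $s=2$ into $x_L=\dfrac{rs}{rs-r-s}$, obtaining
\[
x_L=\frac{2r}{2r-r-2}=\frac{2r}{r-2}.
\]
Since $r>2$, the denominator $r-2$ is a positive integer, so $x_L$ is a well-defined positive rational number; in particular no sign ambiguity will arise when we clear denominators.

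Next I would rewrite the inequality $x_L>3$. Because $r-2>0$, the inequality $\dfrac{2r}{r-2}>3$ is equivalent to $2r>3(r-2)$, that is, to $2r>3r-6$, i.e. to $r<6$. Combining this with the standing hypothesis $r>2$ and the fact that $r$ is an integer gives exactly $r\in\{3,4,5\}$; conversely, each of $r=3,4,5$ satisfies $r<6$ and hence $x_L>3$. This establishes the "exactly for" claim.

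There is no real obstacle here: the only point worth a moment's care is observing that clearing the denominator $r-2$ preserves the direction of the inequality throughout the relevant range $r>2$, after which the statement is an immediate linear computation. (As a side remark, if one also wishes to restrict to torus knots one would impose $\gcd(r,2)=1$, leaving $r\in\{3,5\}$, but the lemma as stated concerns the pairs $(r,2)$ without that restriction.)
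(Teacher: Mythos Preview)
Your proof is correct and follows essentially the same approach as the paper: substitute $s=2$ into $x_L$, simplify to $\frac{2r}{r-2}$, and clear the positive denominator to reduce $x_L>3$ to $r<6$. Your version is slightly more explicit about why the inequality direction is preserved, but the argument is the same.
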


\begin{proof}
Observe that
\[
\frac{2r}{2r-r-2}=\frac{2r}{r-2}>3\quad \Leftrightarrow \quad 2r>3r-6 \quad \Leftrightarrow \quad r<6
\]
\end{proof}

\begin{lemma}
The Seifert manifold
\[
\left\langle O,o,0\, |\,  -1;(s,b_1),(r,b_2),(m,n) \right\rangle
\]
where $(s,b_1)$, $(r,b_2)$ and $(r,s)$ are pairs of coprime integer numbers, $0<b_1<s$, $0<b_2<r$, $s<r$, and $|-rs+b_1r+b_2s|=1$, is the result of Dehn surgery on the torus knot $K_{(r,s)}$ or $K^{*}_{(r,s)}$.
\end{lemma}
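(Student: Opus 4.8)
The plan is to read the surgery description directly off the parametrisation of $(p/q)$-Dehn surgery on torus knots recalled in the paragraphs above, inverting it. First I would observe that the stated hypotheses are precisely those under which $(O,o,0\,|\,-1;(s,b_1),(r,b_2))$ is the lens space $L(|-rs+b_1r+b_2s|,\ast)=L(1,\ast)=S^3$, fibred so that a general fibre is the $(r,s)$-torus knot: when $-rs+b_1r+b_2s=-1$ this general fibre is the left-handed knot $K_{(r,s)}$, and when $-rs+b_1r+b_2s=1$ it is its mirror $K^*_{(r,s)}$ (Seifert \cite{S1933}; see also the discussion of $S_{(r,s)}$ above). Deleting a fibred solid-torus neighbourhood of a general fibre exhibits the knot exterior as Seifert fibred over the disc with exceptional fibres $(s,b_1)$ and $(r,b_2)$, and refilling along the slope $p\overrightarrow{M}+q\overrightarrow{L}$ yields, by Moser's computation \cite{M1971} quoted above, the three-fibre Seifert manifolds $S_{(r,s)}(p/q)$ and $S'_{(r,s)}(p/q)$ written down earlier in this section.

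It then remains to solve, for the given coprime pair $(m,n)$ with $m\geq 1$, the equations matching the third Seifert pair. If $-rs+b_1r+b_2s=-1$ and $n\neq 0$, I would set $q=n$ and $p=m-nrs$; then $\gcd(p,q)=\gcd(m-nrs,n)=\gcd(m,n)=1$ and $qrs+p=m>0$, so in the formula for $S_{(r,s)}(p/q)$ the sign is $\varepsilon=+1$, whence $\varepsilon q=n$ and
\[
S_{(r,s)}\!\left(\tfrac{m-nrs}{n}\right)=\left\langle O,o,0\,|\,-1;(s,b_1),(r,b_2),(m,n)\right\rangle ,
\]
so the manifold is the result of $((m-nrs)/n)$-Dehn surgery on $K_{(r,s)}$. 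If $-rs+b_1r+b_2s=1$ and $n\neq 0$, the mirror choice $q=n$, $p=m+nrs$ gives $-qrs+p=m>0$ and $\gcd(p,q)=1$, so the manifold is $((m+nrs)/n)$-Dehn surgery on $K^*_{(r,s)}$. If one insists on the convention $p>0$, one replaces $(p,q)$ by $(-p,-q)$ whenever $p<0$, which alters neither the slope $p/q$ nor the resulting manifold.

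Finally I would dispose of the two degenerate possibilities. If $n=0$, coprimality forces $m=1$, the signature normalises to $(O,o,0\,|\,-1;(s,b_1),(r,b_2))=S^3$, and the manifold is the trivial (meridional) filling of the knot exterior. If the chosen $p$ vanishes then, arguing exactly as in the preceding lemma, one finds $m=rs$ and $n=\pm1$, and the manifold is the $0/1$-surgery on $K_{(r,s)}$ or $K^*_{(r,s)}$ (the $e=0$ case); the displayed formulas still apply since $qrs+p=rs\neq 0$. The only genuinely delicate point in the argument is keeping the orientation conventions straight — which of $K_{(r,s)}$, $K^*_{(r,s)}$ occurs and what the sign $\varepsilon$ is — and this is governed entirely by the sign of $-rs+b_1r+b_2s$; everything else is the elementary arithmetic of the two surgery formulas.
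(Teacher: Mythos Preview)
Your proposal is correct and follows essentially the same approach as the paper: split on the sign of $-rs+b_1r+b_2s$ and read off the surgery coefficient, namely $(m-rsn)/n$ on $K_{(r,s)}$ in the $-1$ case and $(m+rsn)/n$ on $K^{*}_{(r,s)}$ in the $+1$ case. You supply the verifications (coprimality of $p,q$, the sign of $qrs+p$ forcing $\varepsilon=+1$, and the degenerate cases $n=0$ and $p=0$) that the paper's two-line proof leaves implicit, but the argument is the same.
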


\begin{proof}
If $-rs+b_1r+b_2s=-1$, then the manifold is the result of $\left( (m-rsn)/n\right)$ Dehn surgery on the left handle torus knot $K_{(r,s)}$: $S_{(r,s)}\left( (m-rsn)/n\right)$.

If $-rs+b_1r+b_2s=1$, then the manifold is the result of $\left( (m+rsn)/n\right)$ Dehn surgery on the right handle torus knot $K^{*}_{(r,s)}$: $S'_{(r,s)}\left( (m+rsn)/n\right)$.
\end{proof}

\begin{theorem}
The only Seifert manifolds
\[
\left\langle O,o,0\, |\,  -1;(s,b_1),(r,b_2),(m,n) \right\rangle
\]
where $(s,b_1)$, $(r,b_2)$ and $(r,s)$ are pairs of coprime integer numbers, $0<b_1<s$, $0<b_2<r$, $s<r$, $e\neq 0$ and $|-rs+b_1r+b_2s|=1$, supporting spherical orbifold structures with singular set the exceptional $(m,n)$-fibre, are those obtained by Dehn surgery in the torus knots $(r,2)$, for all $r>2$, $(4,3)$, and $(5,3)$.

The angle around the singular geodesic is $\pi$, for $(4,3)$, $(5,3)$ and $(r,2)$ $r>6$; $\pi$ or $2\pi /3$, for $(5,2)$; and  $\pi$, $2\pi /3$, $\pi /2$ or $2\pi /5$, for $(3,2)$.
\end{theorem}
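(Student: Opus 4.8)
The plan is to couple Theorem \ref{t1fibra} with the integrality constraint that distinguishes orbifolds among conemanifolds. By the lemma just proved, $M=\langle O,o,0\,|\,-1;(s,b_1),(r,b_2),(m,n)\rangle$ is Dehn surgery along $K_{(r,s)}$ (or its mirror), with the $(m,n)$-fibre as the core. Putting the two genuine exceptional fibres in the roles of $a_1\le a_2$ (so $a_1=s$, $a_2=r$) and the core in the role of $a_3=m$, Theorem \ref{t1fibra} says that the conemanifold $(M,\beta)$, with cone angle $\beta$ along the $(m,n)$-fibre, is spherical exactly for
\[
\beta_L=\frac{2m(rs-r-s)}{rs}\,\pi<\beta<\frac{2m(rs-r+s)}{rs}\,\pi=\beta_U ,
\]
that is, writing $x=2\pi m/\beta$ as in Section \ref{s6}, exactly for $x_U<x<x_L$ with $x_U=\tfrac{rs}{rs-r+s}$, $x_L=\tfrac{rs}{rs-r-s}$. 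These bounds do not involve $b_1,b_2$, so the admissible residues and the two orientations need not be separated; and the hypothesis $e\ne0$ is precisely what places us in the spherical (not $S^2\times\mathbb R$) column.

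Next I would impose the orbifold condition: $(M,\beta)$ is an orbifold with the $(m,n)$-fibre as singular locus iff $\beta=2\pi/j$ for an integer $j\ge2$, the orbit space being then the sphere with cone points of orders $s$, $r$, $mj$, which is good. Hence a spherical orbifold with this singular set exists iff some integer $j\ge2$ satisfies $x_U<mj<x_L$. An elementary check gives $x_U<2$ for every coprime pair $r>s>1$, so $mj\ge2>x_U$ automatically and the only binding condition is $mj<x_L$; the smallest value $mj=2$ is attained at $m=1$, $j=2$. Therefore the orbifold exists iff $x_L>2$, and since $x_L=2$ would force $(r,s)=(6,3)$ — not coprime — the requirement is simply $x_L>2$. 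By Lemma \ref{lxl} this holds exactly for the coprime pairs with $s=2$, $r>2$ and for $(4,3),(5,3)$, which is the asserted list; conversely for each of them one checks that $(m,n)=(1,1)$ gives $e=(1-rs)/rs\ne0$, and then $\beta=\pi$ lies in $(\beta_L,\beta_U)$.

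For the cone angles, $\beta=2\pi/j$ is realized by some surgery manifold iff $x_U<mj<x_L$ for some $m\ge1$; since $x_U<2\le mj$ this reduces to $mj<x_L$, whose widest range of $j$ is obtained at $m=1$, giving $2\le j<x_L$. It then remains to evaluate $x_L$ on the four families: for $(3,2)$, $x_L=6$, so $j\in\{2,3,4,5\}$ and the angles are $\pi,\tfrac{2\pi}{3},\tfrac{\pi}{2},\tfrac{2\pi}{5}$; for $(5,2)$, $x_L=\tfrac{10}{3}$, so $j\in\{2,3\}$ and the angles are $\pi,\tfrac{2\pi}{3}$; for $(4,3)$ and $(5,3)$, $x_L=\tfrac{12}{5}$ and $\tfrac{15}{7}$ lie in $(2,3)$, so $j=2$ only and the angle is $\pi$; and for $(r,2)$ with $r>6$, Lemma \ref{lxl2} gives $2<x_L=\tfrac{2r}{r-2}<3$, so again $j=2$ and the angle is $\pi$.

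The real content is the elementary number theory packaged in Lemmas \ref{lxl} and \ref{lxl2}, joined to Theorem \ref{t1fibra} through the constraint $\beta=2\pi/j$; the points that need care are that $m$ is permitted to equal $1$ — so the singular fibre is the surgery core, an honest exceptional fibre only when $m\ge2$, and the $m=1$ structures account for every cone angle other than $\pi$ — and that $x_L=2$ never occurs for a coprime pair, so no borderline case is overlooked.
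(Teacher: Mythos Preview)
Your proof is correct and follows essentially the same route as the paper: translate the sphericity interval of Theorem~\ref{t1fibra} into the $x$-coordinate of the graph, impose the orbifold condition $\beta=2\pi/j$ so that $x=mj$ must be an integer in $(x_U,x_L)$, and then invoke Lemmas~\ref{lxl} and~\ref{lxl2} to decide when such an integer exists and how many values of $j$ occur. Your write-up is in fact more explicit than the paper's on two points: you verify directly that $x_U<2$ for every coprime pair (the paper only records $x_U>1$, which suffices for the nonexistence direction but leaves the check $2>x_U$ implicit in the existence direction), and you note that $x_L=2$ would force the non-coprime pair $(6,3)$, so the borderline never arises.
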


\begin{proof}
The points in the graph corresponding to spherical orbifold structures are points with integer coordinates lying between the lines $\mathcal{U}$ and $\mathcal{L}$. Observe that always $x_{U}>1$. Lemma \ref{lxl} gives the pairs where $x_{L}<2$. Therefore, for all those cases, there are no points with integer coordinates in the spherical region. For the other cases there are always points with coordinates $(2,n)$ corresponding to spherical orbifold structures with angle $\pi$ along the core of the surgery if $n$ is even, and non singular spherical structure if $n$ is odd.

On the other hand, the only cases with $x_{L}>3$  are $(r,s)$ with $6>r>s>1$, whose graphs are depicted in Figures \ref{fplot32}, \ref{fplot43} and \ref{fplots}.
\end{proof}

\begin{corollary}
The only Seifert manifolds
\[
S_{(r,s)}\left( p/q\right)=\left\langle O,o,0\, |\, -1;(s,b_1),(r,b_2),(qrs+p,q)\right\rangle
\]
where $(s,b_1)$, $(r,b_2)$ and $(r,s)$ are pairs of coprime integer numbers, $0<b_1<s$, $0<b_2<r$, $s<r$, $e\neq 0$ and $|-rs+b_1r+b_2s|=1$, supporting Nil orbifold or manifold structures with singular set the exceptional $(qrs+p,q)$-fibre, are those obtained by Dehn surgery in the trefoil knots $K_{(3,2)}$ or $K^{*}_{(3,2)}$
\end{corollary}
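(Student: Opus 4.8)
The plan is to reduce the whole statement to the single question of when the rational number $x_L=\frac{rs}{rs-r-s}$ is an integer, and then to settle that question with a short case analysis based on Lemma~\ref{lxl}. First I would apply Theorem~\ref{t1fibra} to $S_{(r,s)}(p/q)=\langle O,o,0\,|\,-1;(s,b_1),(r,b_2),(m,n)\rangle$, where $m=|qrs+p|$ and $n=q$ up to sign, taking the two fixed multiplicities to be $r$ and $s$ and letting the cone angle $\beta_3$ vary along the core $(m,n)$-fibre. Theorem~\ref{t1fibra} then tells us that this conemanifold structure carries Nil geometry for exactly one angle, $\beta_3=\beta_L=2\frac{m(rs-r-s)}{rs}\pi$, which in the graph of the present section is the intersection of the line $l_{m/n}$ with the vertical line $\mathcal{L}\colon x=x_L$.

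Next I would translate the words ``Nil orbifold or manifold structure'' into arithmetic. By the relation $\beta_i=2\alpha_i a_i$ of~(\ref{ebetaalfa}), the induced structure on the base $\mathbf{B}=S^2$ is a genuine orbifold (equivalently, the core fibre carries an integer label $\ell\ge1$, the cone angle there being $2\pi/\ell$, with $\ell=1$ the non-singular manifold case) exactly when $\beta_L=2\pi/\ell$, i.e. when $\ell=x_L/m\in\mathbb{Z}_{\ge1}$. In particular $x_L$ must be a positive integer; and since $rs-r-s=(r-1)(s-1)-1\ge1$ (using $r>s>1$) while $rs-r-s<rs$, we always have $x_L>1$, so in fact $x_L\in\mathbb{Z}_{\ge2}$ is a necessary condition for a Nil orbifold or manifold structure whose singular set is the exceptional $(m,n)$-fibre.

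Now I would run the case analysis. By Lemma~\ref{lxl}, $x_L<2$ for every pair $r>s>1$ except $(r,2)$ with $r>2$, and the two pairs $(4,3)$ and $(5,3)$; for all the non-excepted pairs $1<x_L<2$, hence $x_L\notin\mathbb{Z}$ and no Nil orbifold is possible, so only the three excepted families remain. For $(4,3)$ one gets $x_L=\tfrac{12}{5}$ and for $(5,3)$ one gets $x_L=\tfrac{15}{7}$, neither an integer. For $(r,2)$ one has $x_L=\frac{2r}{r-2}=2+\frac{4}{r-2}$, an integer precisely when $(r-2)\mid4$, i.e. $r\in\{3,4,6\}$; of these only $r=3$ satisfies the coprimality requirement $\gcd(r,2)=1$. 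Hence $(r,s)=(3,2)$ is the only surviving pair, the left- or right-handed trefoil. Conversely, that the trefoil really does support such structures is exactly conclusions (3) and (3') recalled above; one only notes that, since here $x_L=6=rs$, the choice $(m,n)=(6,1)$ yields the $(0/1)$-surgery, which is Euclidean ($e=0$) rather than Nil, whereas $m=2$, $m=3$, and $m=6$ with $n\ne1$ all give $e\ne0$.

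I expect the only genuinely delicate point to be the step in the second paragraph that identifies ``orbifold/manifold structure'' with the integrality of $\ell=x_L/m$ via~(\ref{ebetaalfa}), together with the accompanying sanity check that the hypothesis $e\ne0$ is compatible with the trefoil examples; everything after that is a routine finite computation resting on Lemma~\ref{lxl}.
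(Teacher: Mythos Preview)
Your proof is correct and follows the same route as the paper: the paper's own argument is the single sentence ``The only value of $(r,s)$, $r>s>1$ for which $x_{L}$ is an integer is $(3,2)$,'' and your proposal simply supplies the details of that claim by invoking Lemma~\ref{lxl} and then checking the finitely many remaining pairs $(r,2)$, $(4,3)$, $(5,3)$. Your added remarks on the converse (conclusions (3) and (3')) and on the compatibility with $e\neq 0$ are careful touches that the paper leaves implicit.
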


\begin{proof}
The only value of $(r,s)$, $r>s>1$ for which $x_{L}$ is an integer is $(3,2)$.
\end{proof}

 \begin{figure}[h]
\begin{center}\epsfig{file=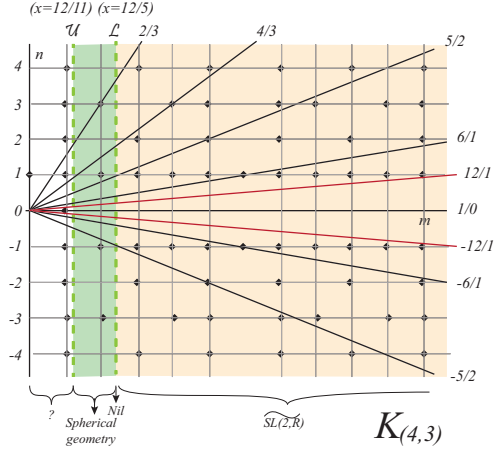,height=6cm}
\caption{The graph for the knots $K_{(4,3)}$ and $K^{*}_{(4,3)}$} \label{fplot43}\end{center}
\end{figure}

\subsection{Conclusions for the knots $K_{(4,3)}$ and $K^{*}_{(4,3)}$}\hspace*{\fill} \\

\begin{enumerate}
\item Points in the line $l_{12/1}$ ($l_{12/(-1)}$) represent the geometric conemanifold structures in the manifold  $S_{(4,3)}\left( 0/1\right)$ ($S'_{(4,3)}\left( 0/1\right)$). The possible geometries in both cases are $(S^2\times\mathbb{R})$, Euclidean and $(H^2\times\mathbb{R})$. These conemanifolds structures are included in the next conclusions where we write in brackets the corresponding geometry.
\item For all $S_{(4,3)}\left( p/q\right)$ and $S'_{(4,3)}\left( p/q\right)$ the upper sphericity limit $\beta_{U}$ is equal to $11/5$ times the lower sphericity limit $\beta_{L}$.
  \item There are no Nil (Euclidean) manifold structures on $S_{(4,3)}\left( p/q\right)$ and \newline $S'_{(4,3)}\left( p/q\right)$.
  \item There are no  Nil (Euclidean) orbifold structures on $S_{(4,3)}\left( p/q\right)$ and $S'_{(4,3)}\left( p/q\right)$.
  \item The orbifold $\left(S_{(4,3)}\left( (12v-1)/(-v)\right),2\right)$, $v\in \mathbb{N}$ has a spherical orbifold structure with the core of the surgery as singular set labeled 2. It is the Seifert manifold
      \[
      \left\langle O,o,0\, |\, -1;(3,1),(4,3),(1,v)\right\rangle =(O,o,0\, |\, v-1;(3,1),(4,3))
       \]
       which is the lens space $L(12v+1,16v)$ studied in Subsection 4.2.
\item The orbifold $\left(S'_{(4,3)}\left( (1+12v)/v\right),2\right)$, $v\in \mathbb{N}$ has a spherical orbifold structure with the core of the surgery as singular set labeled 2. It is the Seifert manifold
    \[
    \left\langle O,o,0\, |\, -1;(3,2),(4,1),(1,v)\right\rangle =(O,o,0\, |\, v-1;(3,2),(4,1)).
     \]
     This is the lens space $L(12v-1,-2+15v)$ studied in Subsection 4.2.
  \item The manifold $S_{(4,3)}\left( (12v-2)/(-v)\right)$, $v$ odd integer, which is the Seifert manifold
  \[
  \left\langle O,o,0\, |\, -1;(3,1),(4,3),(2,v)\right\rangle =(O,o,0\, |\, v-2;(3,1),(4,3),(2,1))
   \]has a spherical manifold structure with the core of the surgery as geodesic. This is the spherical manifold $O(12v-5)$ studied in Subsection 4.5.
\item The manifold $S'_{(4,3)}\left( (2+12v)/v\right)$, $v$ odd integer, which is the Seifert manifold
\[
\left\langle O,o,0\, |\, -1;(3,2),(4,1),(2,v)\right\rangle =(O,o,0\, |\, v-2;(3,2),(4,1),(2,1))
 \]
 has a spherical manifold structure with the core of the surgery as geodesic. This is the spherical manifold $O(12v-7)$ studied in Subsection 4.5.
  \item All the other  manifold or orbifold structures in $S_{(4,3)}\left( p/q\right)$ and $S'_{(4,3)}\left( p/q\right)$ with  the core of the surgery as geodesic or singular set, have $\widetilde{SL(2,\mathbb{R})}$ ($H^2\times\mathbb{R}$) geometry.
\end{enumerate}

\subsection{Conclusions for the knots $K_{(5,s)}$ and $K^{*}_{(5,s)}$, $s=2,3,4$}\hspace*{\fill} \\

Figure \ref{fplots} shows the graph for the cases $r=5$.
 \begin{figure}[h]
\begin{center}\epsfig{file=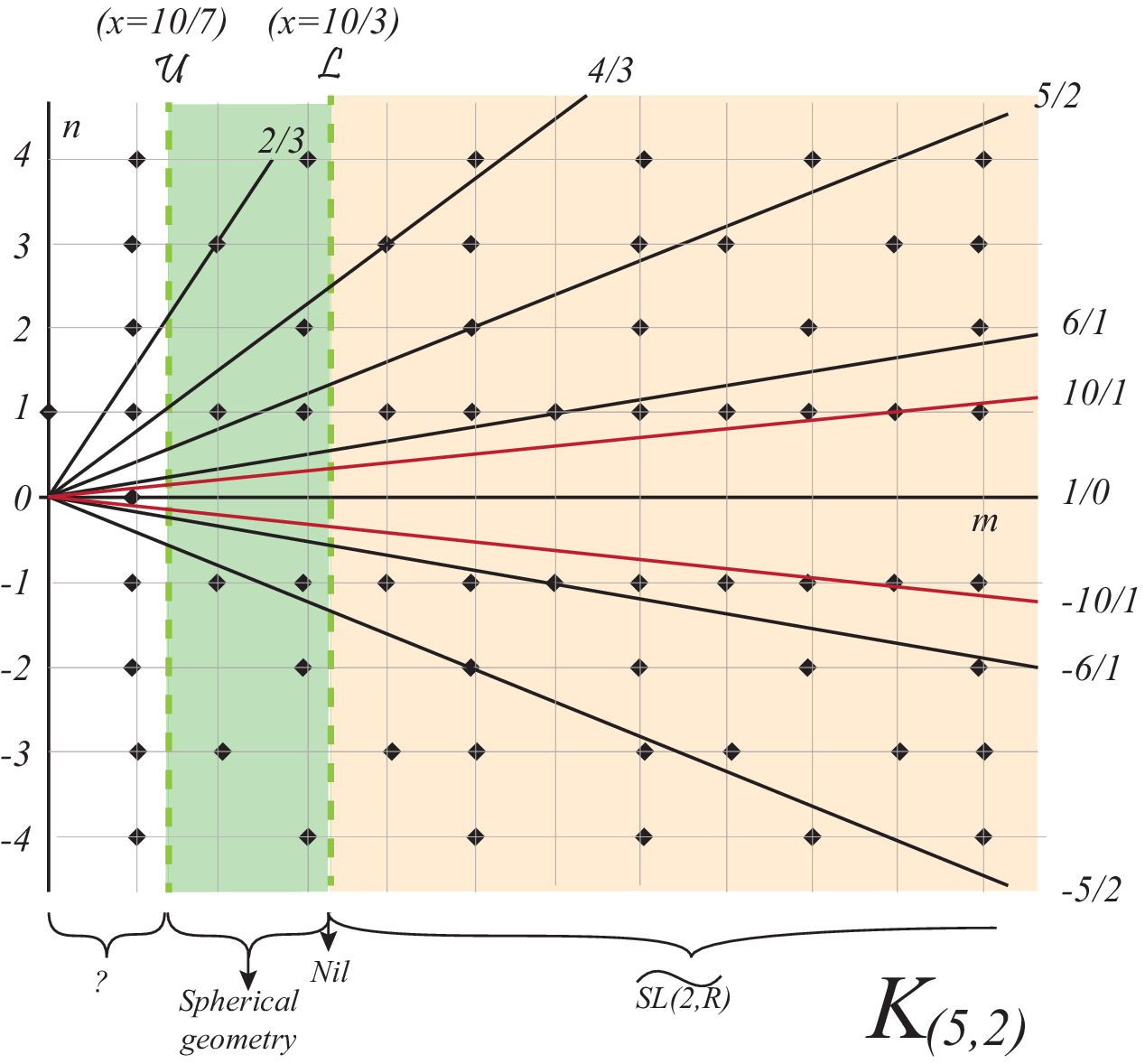,height=5.5cm}\epsfig{file=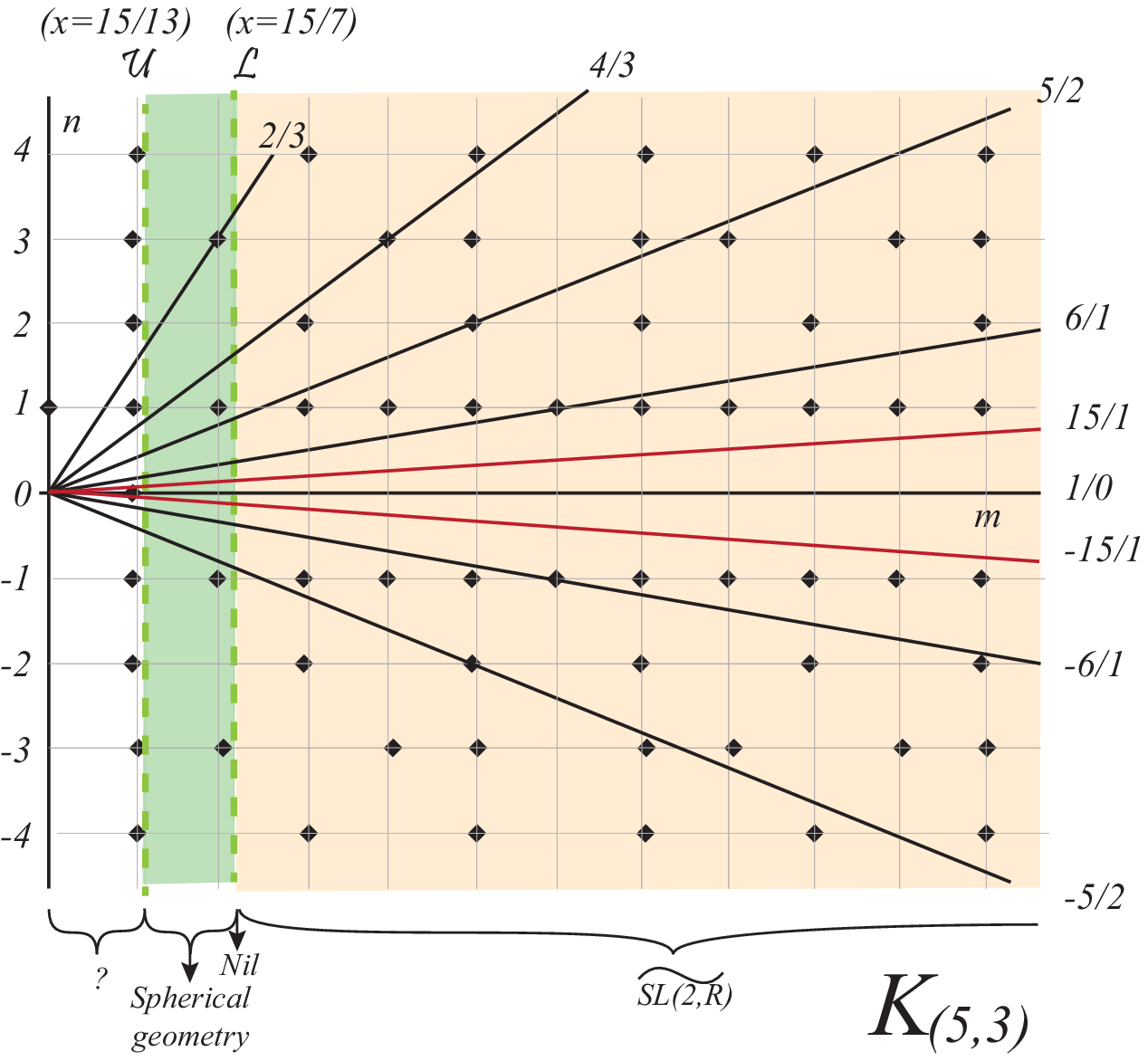,height=5.7cm}\\
\epsfig{file=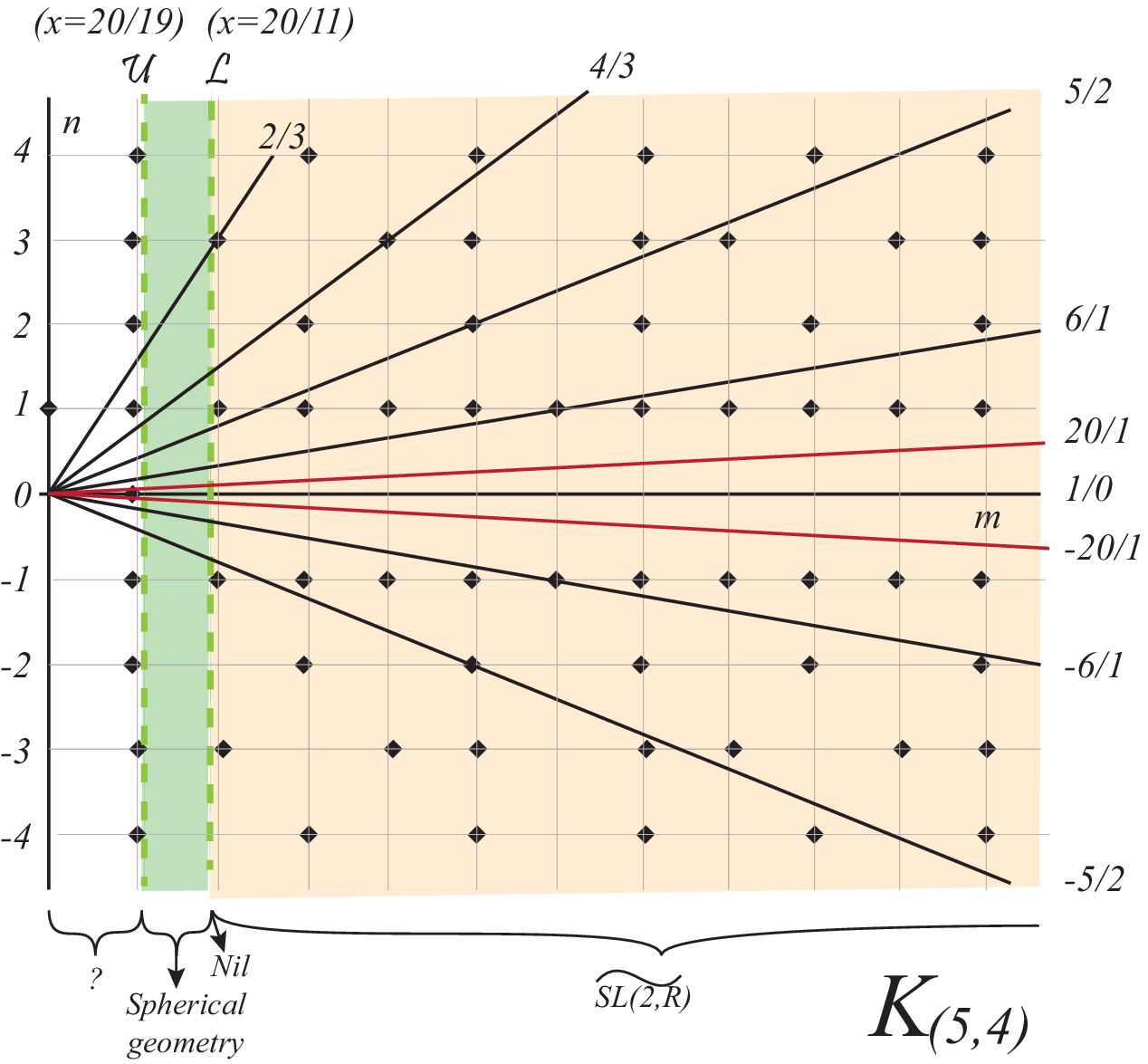,height=5.7cm}
\caption{The graph for knots $K_{(5,s)}$, ($s=2,3,4$)} \label{fplots}\end{center}
\end{figure}
\begin{enumerate}
\item Points in the line $l_{5s/1}$ ($l_{5s/(-1)}$) represent the geometric conemanifold structures in the manifold  $S_{(5,s)}\left( 0/1\right)$ ($S'_{(5,s)}\left( 0/1\right)$). The possible geometries in both cases are $(S^2\times\mathbb{R})$, Euclidean and $(H^2\times\mathbb{R})$. These conemanifolds structures are included in the next conclusions where we write in brackets the corresponding geometry.
\item For all $S_{(5,s)}\left( p/q\right)$ and $S'_{(5,s)}\left( p/q\right)$ the upper sphericity limit $\beta_{U}$ is equal to $7/3$ ($s=2$), $13/7$ ($s=3$), $19/11$ ($s=4$), times the lower sphericity limit $\beta_{L}$.
  \item There are no non-singular Nil (Euclidean) manifold structures on $S_{(5,s)}\left( p/q\right)$ and \newline $S'_{(5,s)}\left( p/q\right)$.
  \item There are no Nil (Euclidean) orbifold structures on $S_{(5,s)}\left( p/q\right)$ and $S'_{(5,s)}\left( p/q\right)$.
  \item There are no orbifold spherical $(S^2\times\mathbb{R})$ structures on $S_{(5,4)}\left( p/q\right)$ and $S'_{(5,4)}\left( p/q\right)$.
\item For $s=2,3$ and $v\in \mathbb{N}$, the orbifolds $\left(S_{(5,s)}\left( (|1-5sv|)/(-v)\right),2\right)$ and \newline $\left(S'_{(5,s)}\left( (1+5sv)/v\right),2\right)$ have a spherical orbifold structure with the core of the surgery as singular set labeled 2. Here
    \begin{eqnarray*}
      S_{(5,2)}\left( \frac{|1-10v|}{-v} \right) &=&\left\langle O,o,0\, |\, -1;(2,1),(5,3),(1,v)\right\rangle =\\ &=&(O,o,0\, |\, v-1;(2,1),(5,3))= L(10v+1,15v-1) \\
     S_{(5,3)}\left( \frac{|1-15v|}{-v} \right) &=& (O,o,0\, |\, v-1;(3,2),(5,2))=L(15v+1,9v)
     \end{eqnarray*}
     \begin{eqnarray*}
      S'_{(5,2)}\left( \frac{1+10v}{v} \right) &=&(O,o,0\, |\, v-1;(2,1),(5,2))=L(10v-1,15v-4) \\
      S'_{(5,3)}\left( \frac{1+15v}{v} \right) &=& (O,o,0\, |\, v-1;(3,1),(5,3))=L(15v-1,20v-3).
    \end{eqnarray*}
\item The orbifolds  $\left(S_{(5,2)}\left( (|1-10v|)/(-v)\right),3\right)$ and $\left(S'_{(5,2)}\left( (1+10v)/v\right),3\right)$, $v\in \mathbb{N}$,  have a spherical orbifold structure with the core of the surgery as singular set labeled 3.
\item The manifolds $S_{(5,s)}\left( (|2-5sv|)/(-v)\right)$ and $S'_{(5,s)}\left( (2+5sv)/v\right)$, $v$  odd integer, have a spherical manifold structure with the core of the surgery as geodesic. Here
     \begin{eqnarray*}
     S_{(5,2)}\left( \frac{10v-2}{-v}\right) &=& \left\langle O,o,0\, |\, -1;(2,1),(5,3),(2,v)\right\rangle =\\
     &=&(O,o,0\, |\, v-2;(2,1),(5,3),(2,1))\\
     S_{(5,3)}\left( \frac{15v-2}{-v}\right) &=& \left\langle O,o,0\, |\, -1;(3,2),(5,2),(2,v)\right\rangle =\\
     &=&(O,o,0\, |\, v-2;(3,2),(5,2),(2,1))
     \end{eqnarray*}
      \begin{eqnarray*}
     S_{(5,4)}\left( \frac{20v-2}{-v}\right) &=& \left\langle O,o,0\, |\, -1;(4,1),(5,4),(2,v)\right\rangle =\\
    &=&(O,o,0\, |\, v-2;(4,1),(5,4),(2,1)) \\
     S'_{(5,2)}\left( \frac{2+10v}{v}\right) &=& \left\langle O,o,0\, |\, -1;(2,1),(5,2),(2,v)\right\rangle =\\
    &=&(O,o,0\, |\, v-2;(2,1),(5,2),(2,1))
     \end{eqnarray*}
      \begin{eqnarray*}
     S'_{(5,3)}\left( \frac{2+15v}{v}\right) &=& \left\langle O,o,0\, |\, -1;(3,1),(5,3),(2,v)\right\rangle =\\
    &=&(O,o,0\, |\, v-2;(3,1),(5,3),(2,1)) \\
     S'_{(5,4)}\left( \frac{2+20v}{v}\right) &=& \left\langle O,o,0\, |\, -1;(4,3),(5,1),(2,v)\right\rangle =\\
     &=&(O,o,0\, |\, v-2;(4,3),(5,1),(2,1)).
    \end{eqnarray*}
\item The manifolds $S_{(5,3)}\left( (|3-15v|)/(-v)\right)$ and $S'_{(5,3)}\left( (3+15v)/v\right)$, where $\gcd (3,v)=1$, have a spherical manifold structure with the core of the surgery as geodesic. Here
   \begin{eqnarray*}
     S_{(5,3)}\left( (15v-3)/(-v)\right) &=& \left\langle O,o,0\, |\, -1;(3,2),(5,2),(3,v)\right\rangle \\
     S'_{(5,3)}\left( (3+15v)/v\right) &=& \left\langle O,o,0\, |\, -1;(3,1),(5,3),(3,v)\right\rangle \\
    \end{eqnarray*}
 \item All the other  manifold or orbifold structures in $S_{(5,s)}\left( p/q\right)$ and $S'_{(5,s)}\left( p/q\right)$ with  the core of the surgery as geodesic and possible singular set, have $\widetilde{SL(2,\mathbb{R})}$ $(H^2\times\mathbb{R})$ geometry.
\end{enumerate}

\subsection{Conclusions for the knots $K_{(r,s)}$ and $K^{*}_{(r,s)}$, $r>s>1$, $r>5$}\hspace*{\fill} \\

Figure \ref{fplotsrs} show the graph for the cases $r>5$.
\begin{figure}[h]
\begin{center}
\epsfig{file=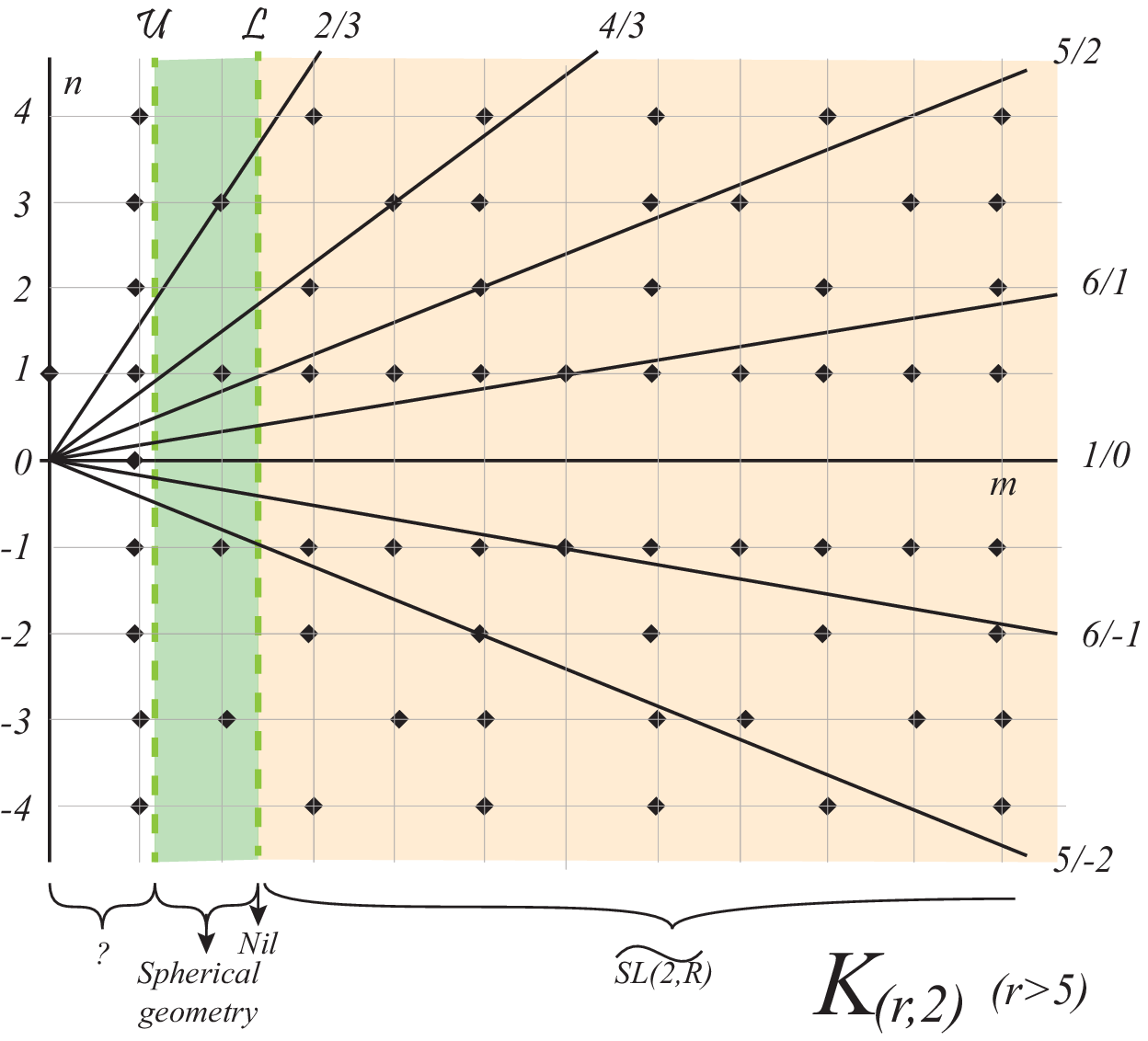,height=5.7cm}\epsfig{file=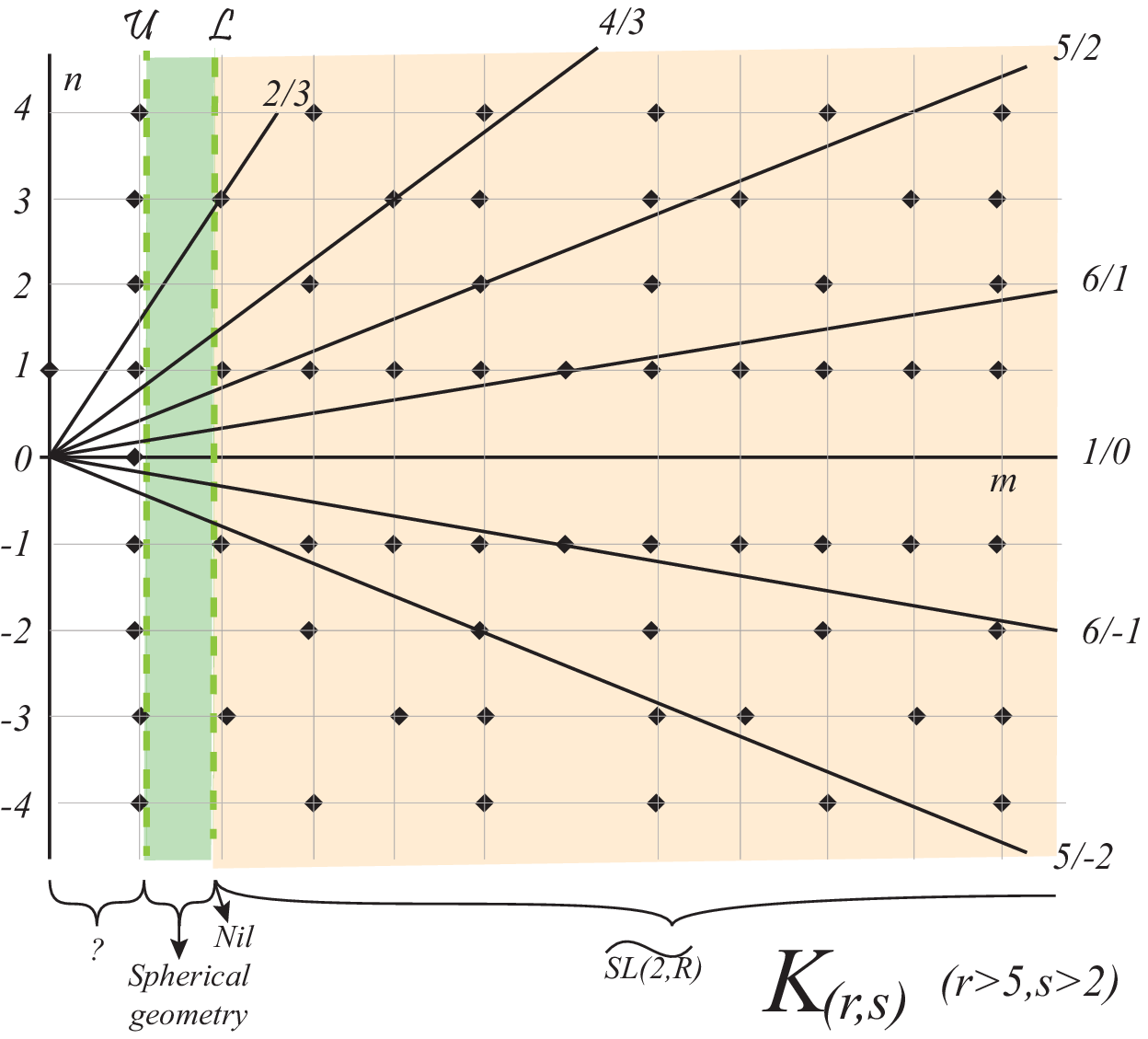,height=5.7cm}
\caption{The graph for knots $K_{(r,2)}$ and $K_{(r,s)}$, $r>5$, $s>2$} \label{fplotsrs}\end{center}
\end{figure}
\begin{enumerate}
\item Points in the line $l_{rs/1}$ ($l_{rs/(-1)}$) represent the geometric conemanifold structures in the manifold  $S_{(r,s)}\left( 0/1\right)$ ($S'_{(r,s)}\left( 0/1\right)$). The possible geometries in both cases are $(S^2\times\mathbb{R})$, Euclidean and $(H^2\times\mathbb{R})$. These conemanifolds structures are included in the next conclusions where we write in brackets the corresponding geometry.
  \item There are no non-singular Nil (Euclidean) manifold structures on $S_{(r,s)}\left( p/q\right)$ and \newline $S'_{(r,s)}\left( p/q\right)$.
  \item There are no Nil (Euclidean) orbifold structures on $S_{(r,s)}\left( p/q\right)$ and $S'_{(r,s)}\left( p/q\right)$.
  \item For $s>2$ there are not manifold or orbifold spherical $(S^2\times\mathbb{R})$ structures on $S_{(r,s)}\left( p/q\right)$ and $S'_{(r,s)}\left( p/q\right)$ with  the core of the surgery as geodesic or singular set.
\item The orbifolds $\left(S_{(r,2)}\left( (|1-2rv|)/(-v)\right),2\right)$ and $\left(S'_{(r,2)}\left( (1+2rv)/v\right),2\right)$, where $v\in \mathbb{N}$,  have a spherical orbifold structure with the core of the surgery as singular set labeled 2.
\item The manifolds $S_{(r,2)}\left( (|2-2rv|)/(-v)\right)$ and $S'_{(r,2)}\left( (2+2rv)/v\right)$, $v$  odd integer, have a spherical manifold structure with the core of the surgery as geodesic.
 \item All the other  manifold or orbifold structures in $S_{(r,s)}\left( p/q\right)$ and $S'_{(r,s)}\left( p/q\right)$ with  the core of the surgery as geodesic or singular set, have $\widetilde{SL(2,\mathbb{R})}$ $(H^2\times\mathbb{R})$ geometry.
\end{enumerate}

\subsection{Some examples}\hspace*{\fill} \\

\textbf{Surgery $\infty$. }For all cases, the horizontal line $y=0$ corresponds to surgery $\infty =1/0$ in a torus knot in the 3-sphere. Then the manifold is $S^{3}$ with the Seifert structure
 \[
 S^{3}= \left(O,o,0\, |\, -1;(s, b_1),(r, b_2)\right)
 \]
where $|-rs+b_1r+b_2s|=1$. This manifold has a spherical conemanifold structure with the knot $K_{(r,s)}$ as singular set with angle $\beta$
\[
2\frac{rs-r+s}{rs}\pi >\beta > 2\frac{rs-r-s}{rs}\pi ;
\]
Nil conemanifold structure, for
\[
\beta = 2\frac{rs-r-s}{rs}\pi ;
\]
 and  $\widetilde{SL(2,\mathbb{R})}$ conemanifold structure for
 \[
 \beta < 2\frac{rs-r-s}{rs}\pi .
 \]

(Actually $S^{3}$ has a manifold spherical structure where the knot $K_{(r,s)}$ is not a geodesic of that structure. However, in this paper, we are only studying geometric conemanifold (or manifold)  structures such that the fibres of the Seifert fibration are geodesic or singular.)

\textbf{Surgery 0.}

The values $p=0$, $q=1$, correspond to  $m=p\mp rsq=\mp rs$, $n=1$. The corresponding conemanifold is represented by the line with slope $\mp rs/1$. These Seifert manifolds have Euler class zero. The possible geometries are $(S^2\times\mathbb{R})$, Euclidean and $(H^2\times\mathbb{R})$.

For a torus knot $K_{(r,s)}$, $r>s>1$, $r>3$, the Seifert manifolds
\begin{eqnarray*}
  S_{(r,s)}\left( 0/1\right) &=&(O,o,0\, |\, -1;(s,b_1),(r,b_2),(rs,1))\\
  S'_{(r,s)}\left( 0/1\right)&=& (O,o,0\, |\, -1;(s,b_1),(r,b_2),(rs,-1))
\end{eqnarray*}
do not have $(S^2\times\mathbb{R})$ or Euclidean orbifold structure with the core of the surgery as singular set and all the orbifols structures have $(H^2\times\mathbb{R})$ geometry. The reason is that, in the graph, the line with slope $rs/\pm 1$ do not contain any point with integer coordinates in the spherical zone.

\bibliographystyle{plain}

\begin{thebibliography}{10}

\bibitem{D1988}
W.D. Dunbar
\newblock Geometric orbifolds.
\newblock {\em Revista Matem\'{a}tica de la UCM}, 1: 67--99,  1988.

\bibitem{FM1997}
 A. Fomenko, S.V. Matveev
 \newblock Algorithmic and computer methods for three-manifolds.
 \newblock {\em Mthematics an its Applications.} Vol 425, Kluwer Academic Publishers, Dordrecht, 1997.

\bibitem{HLM1992}
  H. Hilden, M. T. Lozano, J. M. Montesinos-Amilibia
 \newblock On the Borromean orbifolds: geometry and arithmetic.
 \newblock {\em Topology '90 (Columbus, OH, 1990), 133–167, Ohio State Univ. Math. Res. Inst. Publ., 1, de Gruyter, Berlin, 1992}

\bibitem{HLM1995}
  H. Hilden, M. T. Lozano, J. M. Montesinos-Amilibia
 \newblock On a remarkable polyhedron geometrizing the figure eight knot cone manifolds.
 \newblock {\em J. Math. Sci. Univ. Tokyo}, 2: 501--561, 1995

\bibitem{LM2014}
M. T. Lozano, J. M. Montesinos-Amilibia
\newblock On the degeneration of some 3-manifolds geometries via unit groups of quaternion algebras.
\newblock {\em RACSAM } 109: 669--715, 2015.

\bibitem{LM2015}
M. T. Lozano, J. M. Montesinos-Amilibia
\newblock Geometric conemanifolds structures on $\mathbb{T}_{p/q}$, the result of $p/q$ surgery in the left-handed trefoil knot $\mathbb{T}$.
\newblock {\em : J. Knot Theory  Ramifications} Vol. 24, No. 12 (2015) 1550057 (38 pages)
DOI: 10.1142/S0218216515500571

\bibitem{M1987}
J.M. Montesinos-Amilibia
\newblock Classical Tessellations and Three Manifolds
\newblock {Springer-Verlag } 1987.

\bibitem{M1971}
L. Moser
\newblock Elementary surgery along a torus knot.
\newblock {\em Pacific Journal of Mathematics}, 38: 737--745,  1971.

\bibitem{O1972}
P. Orlik
\newblock Seifert manifolds.
\newblock {\em Lecture Notes in Mathematics}, 291 Springer-Verlag  1972.

\bibitem{S1983}
P. Scott
\newblock The geometries of 3-manifolds.
\newblock {\em Bull. London Math. Soc.}, 15: 401--487,  1983.

\bibitem{S1933}
H. Seifert
\newblock Topologie dreidimensionaler gefaserter R\"{a}ume.
\newblock{\em Acta Math.} 60:147-238 ,  1933.

 \bibitem{T1980}
W. P. Thurston
\newblock The Geometry and Topology of Three-Manifolds.
\newblock {\em Princeton University  1980}


\end{thebibliography}

\end{document}